\numberwithin{equation}{section}
\newtheorem{thm}{Theorem}
\newtheorem*{thmc*}{Coherence Theorem}
\newtheorem{lem}{Lemma}
\newtheorem{cor}{Corollary}
\newtheorem{rem}{Remark}
\newtheorem{nota}{Notation}
\newtheoremstyle{bre}  
  {\topsep}   
  {\topsep}   
  {\itshape}  
  {0pt}       
  {\bfseries} 
  {.}         
  {5pt plus 1pt minus 1pt}  
  {#1 #2*}          
\theoremstyle{bre}
\theoremstyle{definition}
\newtheorem{defn}{Definition}
\newtheorem{example}{\normalfont\scshape Example}
\newtheorem{fact}{\normalfont\itshape Fact}
\newcommand{\xupdownarrow}[1]{%
  {\left\Updownarrow\vbox to #1{}\right.\kern-\nulldelimiterspace}
}
\newcommand{\mybox}{%
    \collectbox{%
        \setlength{\fboxsep}{5pt}%
        \fbox{\BOXCONTENT}%
    }%
}
\newcommand \seq[2]{\shortstack{$#1$ \\ \mbox{}\\
                    \mbox{}\hrulefill\mbox{}\\ \mbox{}\\ $#2$}}
\newcolumntype{P}[1]{>{\centering\arraybackslash}p{#1}}
\newcolumntype{M}[1]{>{\centering\arraybackslash}m{#1}}
\providecommand{\leftsquigarrow}{%
  \mathrel{\mathpalette\reflect@squig\relax}%
}
\newcommand{\reflect@squig}[2]{%
  \reflectbox{$\m@th#1\rightsquigarrow$}%
}
\title{A formal language for cyclic operads} 
\author
{Pierre-Louis Curien$^{1}$ and Jovana Obradovi\' c$^{2}$ \\
\small{\em IRIF,  Universit\' e Paris Diderot and $\pi r^2$ team,  
 Inria, France}\\
{\small$^{1}$\url{curien@pps.univ-paris-diderot.fr}}\enspace\enspace\enspace
\small$^{2}$\url{jovana@pps.univ-paris-diderot.fr}}
\date{}
\begin{document} 

\maketitle 
\begin{abstract}
\noindent We propose a $\lambda$-calculus-style formal language, called the $\mu$-syntax, as a lightweight representation of the structure of cyclic operads. We illustrate the rewriting methods behind the formalism by giving a complete step-by-step proof of the equivalence between the unbiased and biased definitions of cyclic operads. 
\end{abstract}
\section*{Introduction}
In the spirit of recent years' movement in bringing closer mathematics and computer science communities 
through  formalisation of mathematics, this paper proposes a  $\lambda$-calculus-style formal language, called the $\mu$-syntax, as a lightweight representation of the  cyclic operad structure.  \\
\indent   The name and the language of the $\mu$-syntax formalism were motivated by another formal syntactical tool,  the $\mu\tilde{\mu}$-subsystem of the $\overline{\lambda}\mu\tilde{\mu}$-calculus, presented by  Curien and Herbelin in \cite{doc}. In their paper, programs are described by means of expressions called commands, of the form\vspace{-0.10cm} $$\langle\mu\beta.c_1\,|\,\tilde{\mu}x.c_2\rangle ,\vspace{-0.1cm}$$ which exhibit a computation as the result of an interaction between a term $\mu\beta.c_1$ and an evaluation context $\tilde{\mu}x.c_2$, together with a symmetric reduction system \vspace{-0.10cm}$$c_2[\mu\beta.c_1 / x]\longleftarrow\langle\mu\beta.c_1\,|\,\tilde{\mu}x.c_2\rangle\longrightarrow c_1[\tilde{\mu}x.c_2 / \beta] ,\vspace{-0.10cm}$$ accounting for the symmetry of calling mechanisms in programming languages.   In our syntactical approach, we follow this idea and view operadic composition as such a program, i.e. as an interaction between two operations $f$ and $g$, where $f$ provides an input $x$ (selected with $\tilde{\mu}$) for the output $\beta$ of $g$ (marked with ${\mu}$). By moving this concept to the {\em entries-only framework of cyclic operads} \cite[Definition 48]{mm}, in which an operation, instead of having inputs and an   output, now has only {\em entries}, and  can be composed with another operation along any of them, the input/output distinction of  the $\mu\tilde{\mu}$-subsystem goes away, leading to   the existence of a {\em single binding operator} $\mu$, whose purpose is to select the entries of two operations which are to be connected in this interaction.\\ \indent Concretely,  the pattern $\langle \mu x.\underline{\hspace{0.3cm}}\, |\, \mu y.\underline{\hspace{0.3cm}}\,\rangle$  encodes the partial composition operation $(-){{_{x}\circ_{y}}}(-)$. Hence, from the tree-wise perspective, $\langle \mu x.\underline{\hspace{0.3cm}}\, |\, \mu y.\underline{\hspace{0.3cm}}\,\rangle$ encodes   the unrooted tree obtained by grafting two unrooted trees along entries (or half-edges, or flags) $x$ and $y$. For those combinatorially oriented, this construction (and, in particular, the syntactic concept of  binding) can also be seen in terms of differentiation of species of Joyal \cite{joyal}, as a mapping $\partial S\cdot \partial S\rightarrow S$, where $\partial S$ is the derivative of the  species $S$ and $\cdot$ denotes the product of species. In fact, in \cite{mo},  cyclic operads are defined internally to the category of species by using precisely this mapping, and an equivalence with the   representation by means of individual composition operations $(-) {_x\circ}_y (-)$ (and, therefore, with $\langle \mu x.\underline{\hspace{0.3cm}}\, |\, \mu y.\underline{\hspace{0.3cm}}\,\rangle$) is set up. In addition to commands of the form $\langle \mu x.\underline{\hspace{0.3cm}}\, |\, \mu y.\underline{\hspace{0.3cm}}\,\rangle$, which describe {\em partial} grafting of two unrooted trees, the $\mu$-syntax features another kind of commands, whose shape is $(-)\{\mu x.\underline{\hspace{0.3cm}},\dots,\mu y.\underline{\hspace{0.3cm}}\}$, and which describe {\em simultaneous} grafting of unrooted trees. Such a command encodes the unrooted tree obtained by grafting to {\em all the entries} of  the corolla $(-)$ the unrooted trees within the brackets, along their respective entries bound by $\mu$. Therefore, the command $(-)\{\mu x.\underline{\hspace{0.3cm}},\dots,\mu y.\underline{\hspace{0.3cm}}\}$  is to the command  $\langle \mu x.\underline{\hspace{0.3cm}}\, |\, \mu y.\underline{\hspace{0.3cm}}\,\rangle$ what the original notion of simultaneous operadic composition of \cite{GILS} is to the notion of partial operadic composition of  \cite{mss},   in the framework of cyclic operads. The equations of the $\mu$-syntax identify different constructions on unrooted trees that should be regarded as being the same, and the  $\mu$-syntax in whole is easilly mapped to the algebraic formalism of cyclic operads.\\[0.1cm]
\indent The advantage of the $\mu$-syntax over the  usual ``mathematical'' definitions of cyclic operads is tangible from two perspectives. On one hand, if one lays down the two  usual ways of defining cyclic operads, the biased way (resulting in definitions via generators and relations \cite[Theorem 2.2]{Getzler:1994pn}, \cite[Definition 48]{mm}), and the unbiased way (leading to the definition via monads \cite[Definition 2.1]{Getzler:1994pn}), one would argue that these look quite
formidable. This is due to the underlying intricate combinatorial structure of unrooted trees. The commands of the $\mu$-syntax play the role of trees, but with the benefit of being rather simple in-line formulas. Accordingly, the equations of the $\mu$-syntax make a crisp representaion of the cumbersome  laws of the composition of cyclic operads. Summed up, the $\mu$-syntax {\em makes the long story short(er)}.\\
\indent On the other hand, in the spirit of Leibniz's {\em characteristica universalis} and {\em calculus ratiocinator}, the usefulness of the $\mu$-syntax arises when the   question about the completeness, rigour and formalisability of mathematical proofs is asked. This especially concerns  long and involved proofs, which are common in operad theory. Such a proof is, for example, the proof of the equivalence between the biased   and unbiased  definitions of cyclic operads, which is a well-known result (cf. \cite[Theorem 2.2]{Getzler:1994pn}, \cite[Section 5]{Kaufmann}, \cite[Section 4.2]{manin}). The above  requirements, typically asked for in computer science, reflect through out  syntactical proof of this equivalence, as follows. The {\em formalisability} property is met here by fixing a universal syntactic language in which the proof is presented. The   internal structural patterns of this language are convenient for describing in a step-by-step fashion the transitions involved in this proof.  In order to meet the {\em rigour}  requirement, all the involved structures are spelled out in detail. In particular, the correct treatment of the identities of the appropriate monad structure is given.
    Finally, as required by the  {\em completeness} property, the proof that the laws satisfied by an  algebra over the  monad indeed come down to the axioms from the biased definition, is explicitly given.  We  shall make a syntactic reformulation of the monad of unrooted trees figuring in the unbiased definition, which, together with  the $\mu$-syntax, makes a syntactic framework well-suited for  a complete step-by-step proof of the equivalence.\\[-0.75cm]
\paragraph{Layout.}   In Section 1, we recall the biased entries-only definition \cite[Definition 48]{mm} and the unbiased definition \cite[Definition 2.1]{Getzler:1994pn} of cyclic operads. For the latter definition, this involves a syntactic reformulation and a detailed description of   the monad of unrooted trees.
The section  finishes with the statement of the theorem that expresses the equivalence between the two definitions.   Section 2 will be devoted to the introduction and analysis of the $\mu$-syntax.  In Section 3, we employ the $\mu$-syntax in crafting the proof of the equivalence from Section 1.   \\[-0.75cm]
\paragraph{Notation and conventions.} {\em About cyclic operads}. This paper is about non-skeletal set-based cyclic operads. Non-skeletality means that the entries of   operations are labeled by arbitrary finite sets, rather than by natural numbers (as done in the skeletal approach). 
This is just a matter of convenience and a practice coming from  computer science:  we prefer the non-skeletal setting  because we prefer   formulas with ``named"  (rather than ``numbered") variables, and we chose to work in {\bf Set} (rather than in an arbitrary symmetric monoidal category) only to be able to (correctly)  speak about operadic operations in terms of elements. We  assume the existence of  operadic units.\\[0.1cm]
\indent {\em About finite sets and bijections.} Conforming to the computer science practice, in this paper  we assume that a sufficiently large universe of finite sets is fixed (denumerable is enough). Union will always be the {\em ordinary} union of {\em already} disjoint sets. For disjoint finite sets $X$ and $Y$, $X\cup Y$ shall stand for the  union of $X$ and $Y$. For a bijection $\sigma:X'\rightarrow X$ and  $Y\subseteq X$, we shall  denote with $\sigma|^{Y}$ the corestriction of $\sigma$ on $\sigma^{-1}(Y)$.  For $y\not\in X\cup X'$, we  denote with $\sigma_y$  the bijection $\sigma_{y}:X'\cup\{y\}\rightarrow X\cup\{y\}$, defined as $\sigma$ on $X'$, and such that $\sigma_y(y)=y$. If $\sigma(x')=x$,  we  denote with $\sigma^{y/x'}$ the bijection defined in the same way as $\sigma$, except that, instead of $x'$, it contains $y$ in its domain (the inverse image of $x$ now being $y$). If $\tau:Y'\rightarrow Y$ is a bijection such that $X'\cap Y'=X\cap Y=\emptyset$,  then $\sigma\cup\tau:X'\cup Y'\rightarrow X\cup Y$ denotes the bijection defined  as $\sigma$ on $X'$ and as $\tau$ on $Y'$. Finally, if $\kappa:X\backslash\{x\}\cup\{x'\}\rightarrow X$ is the identity on $X\backslash\{x\}$ and $\kappa(x')=x$, we say that $\kappa$ renames $x$ to $x'$ (notice the contravariant nature of this convention). \\[0.1cm]
\indent {\em About type-theoretical notions.} For a comprehensive account on the terminology of type theory and rewriting theory, whose basic notions we shall use in this paper, we refer to \cite{types} and \cite{rewriting}. We list here the essentials. \\
\indent We assume given an infinite set $V$ of {\em variables}, or {\em names} (countable is enough). 
We say that a variable $x$ is {\em fresh with respect to a set} $X$ if $x\not\in X$. The existence of $V$ assures that for any finite set, there exists a variable which is fresh with respect to that set.\\
\indent A {\em multi-sorted formal theory} is a formal theory for which variables, constant  symbols and function symbols, as well as all the terms built from them, have a property called {\em sort} or  {\em type}. Types serve  to   control  the  formation  of terms and to classify them.    A model   of  a multi-sorted formal theory, i.e. of a typed formal language, is a model in the usual sense, which additionally takes into account  sorts of the symbols of the signature of the theory. In other words, the domain of such a model  is a collection of sets $\{{\cal M}(s_i)\}_{i\in I}$, indexed by all sorts of the theory, and the interpretation function   maps  constant symbols  of sort   $s_i$ to the set  ${\cal M}(s_i)$, for all $i\in I$, and  function symbols of sort $(s_1,\dots,s_n;s)$ to functions of the form ${\cal M}(s_1)\times\cdots\times  {\cal M}(s_n)\rightarrow  {\cal M}(s)$.

An {\em abstract rewriting system} (a rewriting system for short) is a pair $(A,\rightarrow)$, where $A$ is  a set and $\rightarrow$ is a binary relation on $A$. The name is supposed to indicate that an element $(a,b)$ of $\rightarrow$ should be seen as a rewriting of $a$ into $b$. We write $a\rightarrow b$ to denote that $(a,b)\in \,\rightarrow$. An element $a\in A$ is a {\em normal form} for $\rightarrow$ if there does not exist $a'\in A$, such that $a\rightarrow a'$. We say that a rewriting system $(A,\rightarrow)$ is {\em terminating} if there does not exist an infinite sequence $a_1\rightarrow a_2\rightarrow\cdots\rightarrow  a_n\rightarrow \cdots $ of elements of $A$. We denote with $\xlongrightarrow{\ast}$ the reflexive and transitive closure of $\rightarrow$. A rewriting system $(A,\rightarrow)$ is {\em confluent} if, for any triple $(a,a_1,a_2)$ of elements of $A$, such that $a\xlongrightarrow{\ast}a_1$ and $a\xlongrightarrow{\ast}a_2$, there exists $a'\in A$, such that $a_1\xlongrightarrow{\ast}a'$ and $a_2\xlongrightarrow{\ast}a'$. A rewriting system $(A,\rightarrow)$ is {\em locally confluent} if, for any triple $(a,a_1,a_2)$ of elements of $A$, such that $a\rightarrow a_1$ and $a\rightarrow a_2$, there exists $a'\in A$, such that $a_1\xlongrightarrow{\ast}a'$ and $a_2\xlongrightarrow{\ast}a'$.
\begin{fact}\label{fact1}
If $(A,\rightarrow)$ is terminating, then it is {\em normalising}, i.e. for any $a\in A$, there exists a normal form $a'$, such that $a\xlongrightarrow{\ast}a'$.
\end{fact}
\begin{fact}\label{fact2}
If $(A,\rightarrow)$ is terminating and confluent, then for $a\in A$, there exists a {\em unique} normal form $a'$, such that $a\xlongrightarrow{\ast}a'$.
\end{fact}
\begin{fact}\label{fact3}
If $(A,\rightarrow)$ is terminating, then it is confluent if and only if it is locally confluent.
\end{fact}
In this paper, we shall examine certain {\em term rewriting systems}, i.e. abstract rewriting systems  $(A,\rightarrow)$, for which the set $A$ is the set of terms of some syntax, and the rewriting relation $\rightarrow$ is obtained by orienting some of the equations of the syntax.

 \section{Cyclic operads}\label{s1}
Operads encode  categories of algebras whose operations have multiple inputs and one output, such as associative  algebras, commutative algebras,  Lie algebras, etc. The interest in encoding more general algebraic structures was a part of the {\em renaissance of operads} in the early nineties of the last century, when various generalizations of operads came into existence. The formalism of cyclic operads was originally introduced by Getzler and Kapranov in \cite{Getzler:1994pn}. The enrichment of the operad structure determined by the definition of a cyclic operad  is provided by adding to the action of permuting the inputs of an operation an action of interchanging its output  with one of the inputs. This feature essentially makes the distinction between the inputs and the output no longer visible, which is adequately captured by unrooted trees as pasting schemes for operations of a cyclic operad. In other words, cyclic operads can be seen as  generalisations of operads for which an operation, instead of having inputs and an   output, now has only ``entries", and   can be composed with another operation along any of them.  As for the formal description of  composition of such operations, the {\em unbiased} and {\em biased} frameworks provide two ways to complete the characterisation of a cyclic operad.  
\subsection{Biased definition of cyclic operads}\label{uff}
 In the biased (entries-only) approach, the  definition of a cyclic operad is biased towards ``local" operadic compositions $_x\circ_y$,  in the sense that these are the only explicitly defined concepts. The various ways to derive a global operadic composition are then equated by the appropriate  axioms.   We revisit below Markl's definition  \cite[Definition 48]{mm},  for a particular case when the underlying functor is  ${\underline{\EuScript C}}:{\bf Bij}^{op}\rightarrow {\bf Set}$, and by adapting it further by also demanding operadic units.   In the sequel, for  $f\in{\underline{\EuScript C}}(X)$ and a bijection $\sigma:X'\rightarrow X$, we write $f^{\sigma}$ instead of ${\underline{\EuScript C}}(\sigma)(f)$.\vspace{-0.1cm}
 \begin{defn}\label{entriesonly}
A {\em cyclic operad} is a  functor ${{\EuScript C}}:{\bf Bij}^{op}\rightarrow {\bf Set}$, together with a distinguished element ${\it id}_{x,y}\in {{\EuScript C}}(\{x,y\})$ for each two-element set $\{x,y\}$, and a partial composition operation \vspace{-0.15cm}
$${{_{x}\circ_{y}}}:{{\EuScript C}}(X)\times {{\EuScript C}}(Y)\rightarrow {{\EuScript C}}(X\backslash\{x\}\cup Y\backslash\{y\}) , \vspace{-0.1cm}$$
defined for arbitrary non-empty finite sets $X$ and $Y$ and elements $x\in X$ and $y\in Y$, such that $X\backslash\{x\}\cap Y\backslash \{y\}=\emptyset .$
These data   satisfy the   axioms given below, wherein, for each of the axioms, we assume the  set disjointness that ensures that all the partial compositions involved are well-defined.  \\[0.15cm]
{\em Sequential associativity.} For $f\in {{\EuScript C}}(X)$, $g\in {{\EuScript C}}(Y)$, $h\in {{\EuScript C}}(Z)$,  $x\in X$, $y,u\in Y$ and  $z\in Z$, the following   equality holds:\\[0.15cm]
\indent \texttt{(A1)} $(f\, {_{x}\circ_{y}}\,\, g)\,\,{_{u}\circ_z}\, h = f\, {_{x}\circ_{y}}\,\, (g\, {_{u}\circ_z}\, h)$.\\[0.15cm]
{\em Commutativity.} For  $f\in{{\EuScript C}}(X)$, $g\in {{\EuScript C}}(Y)$, $x\in X$ and $y\in Y$, the following equality holds:\\[0.15cm]
\indent \texttt{(CO)} $f\, {_x\circ_y} \,\, g=g\, {_y\circ_x} \,\, f$.\\[0.15cm]
{\em Equivariance.} For bijections $\sigma_1:X'\rightarrow X$, $\sigma_2:Y'\rightarrow Y$ and $\sigma=\sigma_1|^{X\backslash\{x\}}\cup \sigma_2|^{Y\backslash\{y\}}$, and $f\in{{\EuScript C}}(X)$ and $g\in {{\EuScript C}}(Y)$, the following equality holds:\\[0.15cm] 
\indent \texttt{(EQ)} $f^{\sigma_1}\,\,{_{{ {\sigma_1^{-1}}(x)}}\circ_{\sigma_2^{-1}(y)}}\,\, g^{\sigma_2}=(f {_x\circ_y} \,\, g)^{\sigma}$.\\[0.15cm]
{\em Right Unitality.} For $f\in\EuScript{C}(X)$, $x\in X$ and a bijection $\sigma$ that renames $x$ to $z$, the following two equalities hold:\\[0.15cm]
\indent \texttt{(U1)} $f\,\,{_x\circ_y}\,\, {\it id}_{y,z}=f^{\sigma}$.\\[0.15cm]
Moreover, the unit elements are preserved under the action of ${{\EuScript C}}(\sigma)$, i.e.\\[0.15cm] 
\indent \texttt{(U3)} ${id_{x,y}}^{\sigma}=id_{u,v}$, \\[0.15cm]
for any two two-element sets $\{x,y\}$ and $\{u,v\}$, and a bijection $\sigma:\{u,v\}\rightarrow\{x,y\}$.\\[0.1cm]
\indent For $f\in {{\EuScript C}}(X)$, the elements of the set $X$ are called the {\em entries} of $f$.\hfill$\square$
\end{defn}
Note that  we impose a slightly weaker condition on the sets $X$ and $Y$ and elements $x\in X$ and $y\in Y$  involved in partial composition than in \cite[Definition 48]{mm}: instead of requiring $X$ and $Y$ to be disjoint, as Markl does, we allow the possibility that they intersect, provided that their intersection is a subset of $\{x,y\}$. This also means that we allow the possibility that $x= y$. Nevertheless, the characterizations of  Definition \ref{entriesonly} and \cite[Definition 48]{mm}, with units added, are equivalent. As for the units, here is a notational remark.
\begin{nota}
It is understood that ${\it id}_{x,y}={\it id}_{y,x}$. We reserve the notation ${\it id}_{\{x,y\}}$ for the identity bijection on the two-element set $\{x,y\}$.
\end{nota}

\indent The lemma below gives  basic properties of the partial composition operation.
\begin{lem}\label{par} The partial composition operation from Definition \ref{entriesonly} satisfies the following laws.\\[0.15cm]
{\em Parallel associativity}. For $f\in {{\EuScript C}}(X)$, $g\in {{\EuScript C}}(Y)$, $h\in {{\EuScript C}}(Z)$,  $x,u\in X$, $y\in Y$ and $z\in Z$, the following   equality holds:\\[0.15cm]
\indent {\em\texttt{(A2)}} $(f\, {_{x}\circ_{y}}\,\, g)\,\,{_{u}\circ_z}\, h =(f\, {_{u}\circ_{z}}\,\, h)\,\,{_{x}\circ_y}\, g$.\\[0.15cm]
{\em Left unitality}. For $f\in\EuScript{C}(X)$, $x\in X$ and a bijection $\sigma$ that renames $x$ to $z$, the following   equality holds:\\[0.15cm]
\indent  {\em\texttt{(U2)}} ${\it id}_{y,z}\,\,{_y\circ_x}\,\,f =f^{\sigma}$.
\end{lem}
\begin{proof}
For \texttt{(A2)}, combine \texttt{(A1)} and \texttt{(CO)}. For \texttt{(U2)}, combine \texttt{(U1)} and \texttt{(CO)}.
\end{proof}
\indent Definition \ref{entriesonly} naturally incorporates the notion of {\em simultaneous composition}, as a sequence of partial compositions of the form as in the law \texttt{(A2)} from Lemma \ref{par}, that is, in which the entry involved in the next instance of a composition always comes from  $f\in{{\EuScript C}}(X)$ and which, moreover, ends when {\em all} the entries of $f\in{{\EuScript C}}(X)$ are exhausted.  In order to avoid writing explicitly such sequences, we introduce the following notation. For $f\in{{\EuScript C}}(X)$,  let\vspace{-0.15cm} $$\varphi:x\mapsto (Y_x,g_x,\underline{x})\vspace{-0.15cm}$$ be an assignment that associates to each $x\in X$ a finite set $Y_x$, an operation $g_x\in{{\EuScript C}}(Y_x)$ and an element $\underline{x}\in Y_x$, in such a way that\vspace{-0.1cm} $$\bigcap_{x\in X}Y_x\backslash\{\underline{x}\}=\emptyset.\vspace{-0.1cm}$$ Let, moreover, $\sigma:X'\rightarrow X$ be an arbitrary bijection such that for all $x\in X$, \vspace{-0.1cm}$$X'\backslash\{\sigma^{-1}(x)\}\cap Y_{x}\backslash\{\underline{x}\}=\emptyset .\vspace{-0.1cm}$$ Under these assumptions, the composite assignment\vspace{-0.1cm} $$\varphi\circ\sigma:x'\mapsto (Y_{\sigma(x')},g_{\sigma(x')},\underline{\sigma(x')}),\vspace{-0.1cm}$$ defined for all $x'\in X'$, together with $f^{\sigma}\in {\EuScript C(X')}$, determines the composition\vspace{-0.1cm} $$((f^{\sigma} \, {_{x'}\circ_{\underline{\sigma(x')}}}\, g_x) \, {_{y'}\circ_{\underline{\sigma(y')}}}\, g_y) \, {_{z'}\circ_{\underline{\sigma(z')}}}\, g_z \cdots ,\vspace{-0.1cm} $$ consisting of a sequence of partial compositions indexed by the entries of $f^{\sigma}$.
We will use the abbreviation $f^{\sigma}(\varphi\circ\sigma)$ to denote such a composition. Thanks to   \texttt{(A2)},  $f^{\sigma}(\varphi\circ\sigma)$ does not depend on the order in which the partial compositions were carried out. We finally set \begin{equation}\label{simultaneous}
f(\varphi)=f^{\sigma}(\varphi\circ\sigma),
\end{equation} and refer to $f(\varphi)$ as {\em the simultaneous composition determined by $f$ and $\varphi$}. That $f(\varphi)$  does not depend on the choice of $\sigma$ is a consequence of   \texttt{(EQ)}.\\
\indent Notice that without the renaming role of $\sigma$,  $f(\varphi)$ is not necessarily well-defined. For example, $f(\varphi)=(f \, {_{x}\circ_{\underline{x}}}\, g_x) \, {_{y}\circ_{\underline{y}}}\, g_y$, where $f\in{{\EuScript C}}(\{x,y\})$, $g_x\in{{\EuScript C}}(\{\overline{x},y\})$ and $g_y\in{{\EuScript C}}(\{\overline{y},v\})$,  is not well-defined, although $\varphi$ satisfies the required disjointness condition.\\
\indent In relation  to the above construction, the statements of the following lemma are  easy consequences of the axioms from  Definition \ref{entriesonly}.\begin{lem}\label{geneq} The simultaneous composition $f(\varphi)$ has the following properties.\begin{itemize}
\item[a)] Let $\psi: Z\rightarrow\bigcup_{x\in X}(Y_x\backslash\{\underline{x}\})$ be a bijection such that for all $x\in X$, $\underline{x}\not\in \psi^{-1}(Y_x\backslash\{\underline{x}\})$. Denote with $\psi_{\underline x}$ the   extension on $Y_x$ of the bijection $\psi|^{Y_x\backslash\{\underline{x}\}}$, which is identity on $\underline{x}$, and let
 $\varphi_{\psi}$ be  defined as ${\varphi}_{\psi}:x\mapsto (g_x^{\psi_{\underline{x}}},\underline{x})$, for all $x\in X$. Then $f(\varphi)^{\psi}=f(\varphi_{\psi}) .$
\item[b)] Let $\psi:y\mapsto(h_y,\underline{y})$ be an assignment that associates to each $y\in\bigcup_{x\in X}(Y_x\backslash\{\underline{x}\})$ an operation $h_y\in{{\EuScript C}}(Z_y)$ and  $\underline{y}\in Z_y$, in such a way that $f(\varphi)(\psi)$ is defined. If $\varphi_{\psi}$ is the assigment defined as $\varphi_{\psi}:x\mapsto(g_x^{\psi_{\underline x}},\underline{x})$, where $\psi_{\underline x}$ denotes the extension on $Y_x$ of the assignment $\psi|_{Y_x\backslash\{\underline{x}\}}$, which is identity on $\underline{x}$, then $f(\varphi)(\psi)=f(\varphi_{\psi}) .$\end{itemize}
\end{lem}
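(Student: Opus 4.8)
\medskip
\noindent\emph{Overall strategy.}
For both parts the plan is to unfold the total composition $f(\varphi)=f^{\sigma}(\varphi\circ\sigma)$ into its defining sequence of partial compositions and then to propagate the extra datum --- the renaming $\psi$ in a), the assignment $\psi$ in b) --- through that sequence, one partial composition at a time: the tool for a) will be the equivariance axiom \textsc{(EQ)}, and the tool for b) the associativity axioms \textsc{(A1)} and \textsc{(A2)}. Throughout, we use freely that $f(\varphi)$ does not depend on the choice of the valid renaming $\sigma$, which lets us pick $X'$ disjoint from all auxiliary names involved ($Z$ in a), the sets $Z_y$ in b)), so that every partial composition occurring in the argument is automatically well-defined.

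\medskip
\noindent\emph{Part a).}
For such a $\sigma$, write $f(\varphi)$ as $f^{\sigma}$ with the operations $g_x$ grafted on successively, along $\sigma^{-1}(x)$ and $\underline{x}$, for $x$ ranging over $X$ in some fixed order. I would then apply $(-)^{\psi}$ and peel these graftings off from the outside; each peeling step is a single use of \textsc{(EQ)}, which splits the renaming currently acting on the whole expression into one part acting on the inner expression already built and one part acting on the $g_x$ just exposed. Since $\psi$ renames only entries contributed by the $g$'s, never an entry of $f^{\sigma}$, the first part can be taken to be the identity on those entries of $f^{\sigma}$ not yet consumed --- so the renaming eventually left over for $f^{\sigma}$ itself is the identity --- while the second part is forced to be $\psi_{\underline{x}}$, the extension of $\psi|^{Y_x\setminus\{\underline{x}\}}$ by the identity on $\underline{x}$, which exists exactly because of the hypothesis $\underline{x}\notin\psi^{-1}(Y_x\setminus\{\underline{x}\})$. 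Hence $f(\varphi)^{\psi}$ is $f^{\sigma}$ with each $g_x^{\psi_{\underline{x}}}$ grafted on along $\sigma^{-1}(x)$ and $\underline{x}$, that is, $f^{\sigma}(\varphi_{\psi}\circ\sigma)$; and since the free entries of $g_x^{\psi_{\underline{x}}}$ lie in $Z$, the same $\sigma$ is valid for $\varphi_{\psi}$, so this is precisely $f(\varphi_{\psi})$.

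\medskip
\noindent\emph{Part b).}
The heart of the matter is the one-step statement: for $y_0\in Y_{x_0}\setminus\{\underline{x_0}\}$ one has $f(\varphi)\, {_{y_0}\circ_{\underline{y_0}}}\, h_{y_0}=f(\varphi')$, where $\varphi'$ coincides with $\varphi$ off $x_0$ and sends $x_0$ to $(g_{x_0}\, {_{y_0}\circ_{\underline{y_0}}}\, h_{y_0},\,\underline{x_0})$. To prove it, unfold $f(\varphi)$ and use \textsc{(A2)} (as in the proof that total composition is well-defined) to bring the grafting of $g_{x_0}$ onto $f^{\sigma}$ to the front; then \textsc{(A2)} again lets the grafting of $h_{y_0}$ --- which is along the entry $y_0$ of $g_{x_0}$, hence along an entry distinct from all the $\sigma^{-1}(x)$'s --- slide past the remaining graftings to sit immediately after that of $g_{x_0}$; finally one application of \textsc{(A1)} rewrites ``graft $g_{x_0}$ onto $f^{\sigma}$, then graft $h_{y_0}$ along $y_0$'' as ``graft $g_{x_0}\, {_{y_0}\circ_{\underline{y_0}}}\, h_{y_0}$ onto $f^{\sigma}$''. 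I would then iterate this: $f(\varphi)(\psi)$ unfolds as the graftings of all the $h_y$'s onto $f(\varphi)$, which we may perform in the order treating, for each $x$, all $y\in Y_x\setminus\{\underline{x}\}$ consecutively; after the block for $x$, the operation now in place of $g_x$ is the iterated partial composition of $g_x$ with those $h_y$'s, which by definition of total composition (and its independence of the auxiliary renaming) is $g_x(\psi_{\underline{x}})=g_x^{\psi_{\underline{x}}}$. Once all blocks are processed $\varphi$ has become $\varphi_{\psi}$, i.e.\ $f(\varphi)(\psi)=f(\varphi_{\psi})$.

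\medskip
\noindent\emph{Main obstacle.}
Choosing which axiom to apply where is mechanical; the real difficulty is the bookkeeping of renaming bijections --- in a), maintaining the precise description of the renaming left over after each peeling step, and in b), reconciling the auxiliary renaming of the outer total composition $f(\varphi)(\psi)$ with those of the inner ones $g_x(\psi_{\underline{x}})$, so that the reshuffled sequence of partial compositions is literally the defining sequence of $f(\varphi_{\psi})$. Both reconciliations use only the already-established independence of a total composition from its renaming, together with \textsc{(EQ)}; they are conceptually routine but notation-heavy.
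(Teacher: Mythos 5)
Your proposal is correct, and it fills in exactly the argument the paper leaves implicit: the paper states Lemma 1 as an "easy consequence of the axioms" and gives no proof (it is not among the asterisked lemmas proved in the Appendix), and the intended route is precisely yours — unfold the total composition and peel with \textsc{(EQ)} for part a), and use \textsc{(A2)}-reordering plus one application of \textsc{(A1)} per grafted $h_y$ for part b), with the renaming bookkeeping handled by the independence of $f(\varphi)$ from the choice of $\sigma$.
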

\indent The generators-and-relations nature of Definition \ref{entriesonly} allows us to easily formalise  cyclic operads   as models of the  multi-sorted equational theory which we now introduce. \\
\indent    The signature  of this theory is determined by taking as sorts all finite sets, while, having denoted with $s$ the sort of a constant symbol  and with $(s_1,\dots,s_n;s)$ the sort of an $n$-ary function symbol, as constant symbols we take the collection consisting of $${\it id}_{x,y}: \{x,y\}$$ and, as function symbols, we take the collection consisting of 
 
 $$\sigma  :  (Y;X)  \mbox{ (of arity $1$) \quad and\quad }   {_{x}\circ_{y}} : (X,Y ; X\backslash\{x\} \cup Y\backslash\{y\})  \mbox{ (of arity $2$)},$$ where $x,y\in V$ and $\sigma$ ranges over all bijections of finite sets. Here, $V$ is the infinite set of variables (i.e. names) whose existence we postulated in the Introduction.
 
Fixing a collection of {\em sorted variables}, or {\em parameters} $P$, and denoting with $P(X)$ the collection of parameters whose sort is $X$, the terms of the theory are built in the usual way:
\begin{center}
\mybox{
$s,t::= a\enspace |\enspace {\it id}_{x,y}\enspace |\enspace s\,{_{x}\circ_{y}}\, t\enspace |\enspace t^{\sigma}  $
}
\end{center} whereas the assignment of sorts to terms is done by the following  rules:  
\begin{center}
\mybox{ 
$\displaystyle\frac{a\in { P}(X)}{a:X}$ \enspace\enspace\enspace\enspace\enspace\enspace $\displaystyle\frac{}{{\it id}_{x,y}:\{x,y\}}$ \enspace\enspace \enspace\enspace\enspace\enspace $\displaystyle\frac{s:X\quad t:Y}{s\, {_{x}\circ_{y}}\,\, t: X\backslash\{x\}\cup Y\backslash\{y\}}$ \enspace\enspace\enspace\enspace\enspace\enspace $\displaystyle\frac{t:X\quad \sigma:(Y; X)}{t^{\sigma}:Y}$ }
\end{center} where $x$ and $y$ are distinct variables in the second rule, while, in the third rule,   $x\in X$, $y\in Y$ and $X\backslash\{x\}\cap Y\backslash\{y\}=\emptyset$. The equations of the theory are derived from the axioms of Definition \ref{entriesonly}, and there are two additional equations, namely \begin{equation}\label{additional}{\it id}_{x,y}^{\sigma}=id_{u,v}\quad\quad\mbox{and}\quad\quad (t^{\sigma})^{\tau}=t^{\sigma\circ\tau},\end{equation} where, in the first equation, $\sigma:(\{u,v\};\{x,y\})$.  

\begin{defn}\label{d2}  A cyclic operad   is a  model of the equational theory from above.
 \hfill$\square$
\end{defn}
That this definition indeed describes the same structure as does Definition \ref{entriesonly} is clear from the requirements that models of multi-sorted   theories fulfill.   The domain of such a model is a collection of sets ${\EuScript C}(X)$, arising by interpreting  all sorts $X$, and  the interpretation  of the remaining of the signature in this universe exhibits the cyclic operad structure in the obvious way.   Observe that the equations \eqref{additional} ensure that the assignment ${\EuScript C}:{\bf Bij}^{\it op}\rightarrow {\bf Set}$, induced   by the model,  is  functorial.\\[0.1cm]
\indent Let $\underline{\EuScript C}:{\bf Bij}^{\it op}\rightarrow {\bf Set}$ be a functor and  let \begin{equation}\label{pc}P_{\underline{\EuScript C}}=\{a\in\underline{\EuScript C}(X)\,|\, X \mbox{ is a finite set}\} \end{equation} be the collection of {\em parameters of} $\underline{\EuScript C}$. Observe that $P_{\underline{\EuScript C}}$ can be considered as a collection of sorted variables for the equational theory introduced above. In this regard,
we call the  syntax of terms built over $P_{\underline{\EuScript C}}$   the {\em combinator syntax generated by} $\underline{\EuScript C}$ and we refer to   terms as {\em combinators}. We shall denote the set of all combinators induced by  $\underline{\EuScript C}$ by $\tt{cTerm}_{\underline{\EuScript C}}$, and, for a finite set $X$, ${\tt{cTerm}}_{\underline{\EuScript C}}(X)$ will be used to denote  the set of all combinators of type $X$.\\[0.1cm] 
\indent In connection with Definition \ref{d2},  if ${\EuScript C}$ is a cyclic operad (and, hence,   a model of the equational theory from above), and writing $\underline{\EuScript C}$ for the underlying functor of ${\EuScript C}$, we shall denote with  $[\rule{.4em}{.4pt}]_{{\EuScript C}}: $ \texttt{cTerm}$_{\underline{\EuScript C}}\rightarrow {{\EuScript C}}$  the induced  interpretation of the combinator syntax.

\subsection{Unbiased definition of cyclic operads}
Cyclic operads were originally introduced in  unbiased manner in \cite[Definition 2.1]{Getzler:1994pn}, as   {\em algebras over a monad of unrooted trees}. In the operadic literature,   incorporated in the structure of cyclic operads and similar definitions, one can find two formalisms of unrooted trees: in \cite[Definition 2.1]{Getzler:1994pn}, the usual  formalism of trees  with   ``indivisible" edges is used, while in \cite{ezra},  \cite{kock}, \cite{Kaufmann}, trees with half-edges (or flags), due to \cite{gra}, are used in the context of modular operads and Feynman categories.  The operations  decorating the  nodes of an unrooted tree are ``composed in one shot'' through the structure morphism of the algebra. 
In this part, we syntactically reformulate  \cite[Definition 2.1]{Getzler:1994pn}. The adaptations we make also include translating it  to the non-skeletal setting, and reconstructing it within a formalism of unrooted trees that incorporates edges as pairs of half-edges, due to \cite{gra}. As it will be clear in Section \ref{mu-syntax}, the formal language of unrooted trees that we present here is crafted in a way which reflects closely the formal language of the $\mu$-syntax.   
\subsubsection{Graphs and unrooted trees}\label{trees}
 Let ${\underline{\EuScript C}}: {\bf Bij}^{op}\rightarrow {\bf Set}$ be a functor and let $P_{\underline{\EuScript C}}$ be as in \eqref{pc}. The syntax of unrooted trees generated by $P_{{\underline{\EuScript C}}}$ is obtained as follows.  An {\em ordinary corolla} is a term $$a(x,y,z,\dots) ,$$ where $a\in {{\underline{\EuScript C}}}(X)$ and $X=\{x,y,z,\dots\}$.  We refer to $a$ as the {\em head symbol} of   $a(x_1,\dots,x_n)$.  We call the elements of $X$ the {\em free variables}  of $a(x,y,z,\dots)$, and we write  $FV(a)=X$  to denote this set. Whenever the set of free variables is irrelevant, we shall refer to an ordinary corolla only by its head symbol. In addition to ordinary corollas, we define {\em special corollas} to be terms of the shape  $$(x,y),$$ i.e. terms which do not have a parameter as a head symbol and which consist only of two distinct variables $x,y\in V$. For a special corolla $(x,y)$, we define $FV((x,y))=\{x,y\}$. 
\begin{rem}
In both ordinary  and special corollas,  the order of  appearance of free variables  in the terms is irrelevant. In other words, we consider equal the terms, say, $a(x,y,z)$ and $a(z,x,y)$, as well as $(x,y)$ and $(y,x)$.
\end{rem}
\indent  A {\em graph} ${\EuScript V}$ is a non-empty, finite set  of corollas with mutually disjoint free variables, together with an involution $\sigma$ on the set \vspace{-0.2cm} $$V({\EuScript V})=\bigcup_{i=1}^{k}FV(a_i)\cup\bigcup_{j=1}^{p}FV((u_j,v_j))  \vspace{-0.2cm} $$  of all variables occuring in ${\EuScript V}$. We write \vspace{-0.1cm}  $${\EuScript V}=\{a_1(x_1,\dots ,x_n), \dots, a_k(y_1,\dots y_m),\dots, (u_1,v_1),\dots,(u_p,v_p);\sigma\} .\vspace{-0.1cm} $$ 

\noindent We  denote  with ${\it Cor}({\EuScript V})$  the set of all corollas of ${\EuScript V}$, and we shall refer to an ordinary corolla by its parameter and denote special corollas with $s_1,s_2$, etc. The set of {\em edges} ${\it Edge}({\EuScript V})$ of   ${\EuScript V}$ consists of pairs $(x,y)$ of variables such that $\sigma(x)=y$  (and, therefore, also $\sigma(y)=x$).  Finally,  we  refer to the fixpoints of $\sigma$ as the {\em free variables of ${\EuScript V}$}, the set of which we shall denote with $FV({\EuScript V})$. 
\begin{rem} 
The set of  variables  of a graph in our formalism corresponds to the set of {\em flags} in the formalism of {\em\cite{gra}} and {\em \cite{ezra}}, i.e. to the set of {\em half-edges}   in the formalism of {\em\cite{Kaufmann}}. All these formalisms of graphs are inherent to operad theory. In graph theory in general, one  does not usually encounter graphs with half edges: graphs  typically feature ``indivisible'' edges.
\end{rem}
  Here is an example.
\begin{example}\label{ex1} The graph  $\{a(x_1,x_2,x_3,x_4,x_5), b(y_1,y_2,y_3,y_4\};\sigma\},$ where $\sigma=(x_4\,\, y_3)(x_5\,\, y_4)$, should be depicted as\vspace{-0.1cm}
\begin{center}
\begin{tikzpicture}
 \node (f) [circle,fill=none,draw=black,minimum size=4mm,inner sep=0.1mm]  at (-1,0) {\small $a$};
\node (g) [circle,fill=none,draw=black,minimum size=4mm,inner sep=0.1mm]  at (1,0) {\small $b$};
\node (a) [label={[xshift=-0.05cm, yshift=-0.29cm,]{\footnotesize $x_1$}},circle,fill=none,draw=none,minimum size=2mm,inner sep=0mm]  at (-1.8,0.7) {};
\node (b) [label={[xshift=-0.05cm,yshift=-0.37cm,]{\footnotesize $x_2$}},circle,fill=none,draw=none,minimum size=2mm,inner sep=0mm]  at (-2.2,0) {};
\node (c) [label={[xshift=-0.05cm, yshift=-0.37cm,]{\footnotesize $x_3$}},circle,fill=none,draw=none,minimum size=2mm,inner sep=0mm]  at (-1.8,-0.7) {};
\node (d) [label={[xshift=0.07cm, yshift=-0.33cm,]{\footnotesize $y_1$}},circle,fill=none,draw=none,minimum size=2mm,inner sep=0mm]  at (1.9,0.6) {};
\node (e) [label={[xshift=0.07cm, yshift=-0.33cm,]{\footnotesize $y_2$}},circle,fill=none,draw=none,minimum size=2mm,inner sep=0mm]  at (1.9,-0.6) {};
\node (i) [label={[xshift=-0.2cm, yshift=-0.15cm,]{\footnotesize $x_4$}},label={[xshift=0.2cm, yshift=-0.17cm,]{\footnotesize $y_3$}},circle,fill=none,draw=none,minimum size=0mm,inner sep=0mm]  at (0,0.6) {};
\node (j) [label={[xshift=-0.2cm, yshift=-0.42cm,]{\footnotesize $x_5$}},label={[xshift=0.2cm, yshift=-0.45
cm,]{\footnotesize $y_4$}},circle,fill=none,draw=none,minimum size=2mm,inner sep=0mm]  at (0,-0.6) {};
\path[out=-50,in=230] (f) edge (g);
\path[out=50,in=130] (f) edge (g);
\draw (f)--(a);
\draw (f)--(b);
\draw (f)--(c);
\draw (g)--(d);
\draw (g)--(e);
\draw (0,0.6)--(0,0.4);
\draw (0,-0.6)--(0,-0.4);
\end{tikzpicture}
\end{center}
\vspace{-0.1cm}
\noindent This graph has two corollas, $a(x_1,x_2,x_3,x_4,x_5)$ and $b(y_1,y_2,y_3,y_4)$, two edges, $(x_4,y_3)$ and $(x_5,y_4)$, and five free variables, $x_1,x_2,x_3,y_1,y_2$.\hfill$\square$\end{example}
Graphs do not need to be connected.  {\em Connected graphs} are distinguished by the following recursive definition:
\begin{itemize} 
\item[$\diamond$] for any finite set $X$ and any $a\in \underline{\EuScript C}(X)$, $\{a(x_1,\dots,x_n);{\it id}_{X}\}$  is  connected,
\item[$\diamond$] for any two-element set $\{x,y\}$, $\{(x,y);{\it id}_{\{x,y\}}\}$ is  connected,
\item[$\diamond$] if graphs ${\EuScript V}_1$ and ${\EuScript V}_2$, with involutions $\sigma_1$ and $\sigma_2$, respectively, are connected, and if   $V({\EuScript V}_1)\cap V({\EuScript V}_2)=\emptyset$,  then, for any $x\in FV({\EuScript V}_1)$ and $y\in FV({\EuScript V}_2)$, the graph  ${\EuScript V}$, determined by ${\it Cor}({\EuScript V})={\it Cor}({\EuScript V}_1)\cup {\it Cor}({\EuScript V}_2)$ and the involution $\sigma$ on $V({\EuScript V})$, defined by $$    \sigma(v) = \left\{\begin{array}{ll}
        \sigma_1(v), & \text{if } v\in V({\EuScript V}_1)\backslash \{x\}\\
         \sigma_2(v), & \text{if } v\in V({\EuScript V}_2)\backslash\{y\}\\
       y, & \text{if } v=x
        \end{array}  \right.$$
is conneted. 
\end{itemize}

\indent The {\em set of  subgraphs of a graph} ${\EuScript V}$ ({\em with involution} ${\sigma}$) is obtained by the following recursive definiton:
\begin{itemize}
\item[$\diamond$] if $a(x_1,\dots,x_n)\in {\it Cor}({\EuScript V})$, then $\{a(x_1,\dots,x_n);{\it id}_{X}\}$, where $X=\{x_1,\dots,x_n\}$, is a subgraph of ${\EuScript V}$,
\item[$\diamond$] if $(x,y)\in  {\it Cor}({\EuScript V})$, then $\{(x,y);{\it id}_{\{x,y\}}\}$ is a subgraph of ${\EuScript V}$,
\item[$\diamond$] if graphs ${\EuScript V}_1$ and ${\EuScript V}_2$, with involutions $\sigma_1$ and $\sigma_2$, respectively, are subgraphs of ${\EuScript V}$, and if there exist  $x\in FV({\EuScript V}_1)$ and $y\in FV({\EuScript V}_2)$, such that $\sigma(x)=y$, then the graph ${\cal W}$, determined by ${\it Cor}({\cal W})={\it Cor}({\EuScript V}_1)\cup {\it Cor}({\EuScript V}_2)$ and the involution  $\tau$ on $V({\cal W})$, defined by  
$$
    \tau(v) = \left\{\begin{array}{ll}
        \sigma_1(v), & \text{if } v\in V({\EuScript V}_1)\backslash \{x\}\\
         \sigma_2(v), & \text{if } v\in V({\EuScript V}_2)\backslash\{y\}\\
       y, & \text{if } v=x
        \end{array}  \right.
 \quad\mbox{or}\quad     \tau(v) = \left\{\begin{array}{ll}
        \sigma_1(v), & \text{if } v\in V({\EuScript V}_1)\\
         \sigma_2(v), & \text{if } v\in V({\EuScript V}_2)
        \end{array} \right. $$
 
is a subgraph of ${\EuScript V}$, and
\item[$\diamond$] if graphs ${\EuScript V}_1$ and ${\EuScript V}_2$, with involutions $\sigma_1$ and $\sigma_2$, respectively, are subgraphs of ${\EuScript V}$, and if there does not exist  $x\in FV({\EuScript V}_1)$ and $y\in FV({\EuScript V}_2)$, such that $\sigma(x)=y$, then the graph ${\cal W}$, determined by ${\it Cor}({\cal W})={\it Cor}({\EuScript V}_1)\cup {\it Cor}({\EuScript V}_2)$ and the involution  $\tau$ on $V({\cal W})$, defined by   $$\tau(v) = \left\{\begin{array}{ll}
        \sigma_1(v), & \text{if } v\in V({\EuScript V}_1)\\
         \sigma_2(v), & \text{if } v\in V({\EuScript V}_2)
        \end{array} \right. $$
is a subgraph of ${\EuScript V}$.
\end{itemize}

Observe that, just as graphs do not have to be connected, so do not subgraphs of an arbitrary graph.\\[0.1cm]
\indent   Starting from this notion of graph,   an {\em extended unrooted tree}  is defined as a {\em  connected graph without loops, 
multiple edges and cycles}.  As these  requirements are standard in the terminology of graphs, we omit their formal definition and illustrate them with an example instead. 
\begin{example}\label{ex2}
The   graph from {\textsc{Example}} \ref{ex1}  is not an extended unrooted tree, since it has two edges between corollas $a$ and $b$.\\[0.1cm]
\indent The graph  $\{a(x_1,x_2,x_3), b(y_1,y_2,y_3);\sigma\}$, where $\sigma=(x_3\,\, y_3)(y_1\,\, y_2)$, is  not an extended unrooted tree either, since the edge $(y_1, y_2)$ connects the corolla $b$ with itself, i.e. it is a loop:\vspace{-0.1cm}
\begin{center}
\begin{tikzpicture}
 \node (f) [circle,fill=none,draw=black,minimum size=4mm,inner sep=0.1mm]  at (-1,0) {\small $a$};
\node (g) [circle,fill=none,draw=black,minimum size=4mm,inner sep=0.1mm]  at (1,0) {\small $b$};
\node (a) [label={[xshift=-0.08cm, yshift=-0.38cm,]{\footnotesize $x_1$}},circle,fill=none,draw=none,minimum size=2mm,inner sep=0mm]  at (-1.85,0.6) {};
\node (c) [label={[xshift=-0.08cm, yshift=-0.38cm,]{\footnotesize $x_2$}},circle,fill=none,draw=none,minimum size=2mm,inner sep=0mm]  at (-1.85,-0.6) {};
\node (d) [label={[xshift=0cm, yshift=-0.8cm,]{\footnotesize $y_2$}},circle,fill=none,draw=none,minimum size=2mm,inner sep=0mm]  at (1.9,0.8) {};
\node (e) [label={[xshift=0cm, yshift=0.1cm,]{\footnotesize $y_1$}},circle,fill=none,draw=none,minimum size=2mm,inner sep=0mm]  at (1.9,-0.8) {};
\node (k) [circle,fill=none,draw=none,minimum size=0mm,inner sep=0mm]  at (2,0) {};
\node (i) [label={[xshift=-0.2cm, yshift=-0.17cm,]{\footnotesize $x_3$}},label={[xshift=0.2cm, yshift=-0.18cm,]{\footnotesize $y_3$}},circle,fill=none,draw=none,minimum size=0mm,inner sep=0mm]  at (0,0.1) {};
\path[out=-50,in=230] (g) edge (k);
\path[out=50,in=130] (g) edge (k);
\draw (f)--(g);
\draw (f)--(a);
\draw (f)--(c);
\draw (0,0.1)--(0,-0.1);
\end{tikzpicture}
\end{center}

\indent   The graph $\{a(x_1,x_2,x_3,x_4,x_5),b(y_1,y_2,y_3,y_4),c(z_1,z_2,z_3);\sigma\}$,
with $\sigma=(x_4\,\, y_2)(y_1\,\, z_2)\linebreak (z_3\,\, x_5)$, is another example of a graph which is  not an extended unrooted tree, this time because of the presence of a cycle  that connects  its three corollas:
\vspace{-0.1cm}
\begin{center}
\begin{tikzpicture}
 \node (f) [circle,fill=none,draw=black,minimum size=4mm,inner sep=0.1mm]  at (-1.15,0) {\small $a$};
\node (g) [circle,fill=none,draw=black,minimum size=4mm,inner sep=0.1mm]  at (0,1.7) {\small $b$};
\node (h) [circle,fill=none,draw=black,minimum size=4mm,inner sep=0.1mm]  at (1.15,0) {\small $c$};
\node (a) [label={[xshift=0cm, yshift=-0.27cm]{\footnotesize $x_1$}},circle,fill=none,draw=none,minimum size=2mm,inner sep=0mm]  at (-1.8,0.9) {};
\node (b) [label={[xshift=-0.05cm, yshift=-0.37cm,]{\footnotesize $x_3$}},circle,fill=none,draw=none,minimum size=2mm,inner sep=0mm]  at (-2.2,-0.3) {};
\node (c) [label={[xshift=-0.015cm, yshift=-0.37cm,]{\footnotesize $x_2$}},circle,fill=none,draw=none,minimum size=2mm,inner sep=0mm]  at (-0.95,-1) {};
\node (d) [label={[xshift=-0.05cm, yshift=-0.28cm,]{\footnotesize $y_4$}},circle,fill=none,draw=none,minimum size=2mm,inner sep=0mm]  at (-0.8,2.4) {};
\node (e) [label={[xshift=0.05cm, yshift=-0.28cm,]{\footnotesize $y_3$}},circle,fill=none,draw=none,minimum size=2mm,inner sep=0mm]  at (0.8,2.4) {};
\node (i) [label={[xshift=-0.2cm, yshift=-0.17cm,]{\footnotesize $x_5$}},label={[xshift=0.2cm, yshift=-0.17cm,]{\footnotesize $z_3$}},circle,fill=none,draw=none,minimum size=0mm,inner sep=0mm]  at (0,0.1) {};
\node (i1) [label={[xshift=-0.23cm, yshift=-0.1cm,]{\footnotesize $x_4$}},label={[xshift=0cm, yshift=0.24cm,]{\footnotesize $y_2$}},circle,fill=none,draw=none,minimum size=0mm,inner sep=0mm]  at (-0.625,0.65) {};
\node (i2) [label={[xshift=0.05cm, yshift=0.22cm,]{\footnotesize $y_1$}},label={[xshift=0.23cm, yshift=-0.12cm,]{\footnotesize $z_2$}},circle,fill=none,draw=none,minimum size=0mm,inner sep=0mm]  at (0.625,0.65) {};
\node (k) [label={[xshift=0.05cm, yshift=-0.4cm,]{\footnotesize $z_1$}},circle,fill=none,draw=none,minimum size=2mm,inner sep=0mm]  at (2.1,-0.55) {};
\draw (h)--(k);
\draw (f)--(g);
\draw (f)--(h);
\draw (g)--(h);
\draw (f)--(a);
\draw (f)--(b);
\draw (f)--(c);
\draw (g)--(d);
\draw (g)--(e);
\draw (0,0.1)--(0,-0.1);
\draw (0.65,0.95)--(0.47,0.83);
\draw (-0.65,0.95)--(-0.47,0.83);
\end{tikzpicture}
\end{center}

\indent Finally, we get an example of a graph which is  an extended unrooted tree   by  changing the involution $\sigma$ of the previous graph to, say, $\sigma'=(x_4\,\, y_2)(y_1\,\, z_2)$, producing in this way the extended unrooted tree with graphical representation 
\vspace{-0.1cm}
\begin{center}
\begin{tikzpicture}
 \node (f) [circle,fill=none,draw=black,minimum size=4mm,inner sep=0.1mm]  at (-1.15,0) {\small $a$};
\node (g) [circle,fill=none,draw=black,minimum size=4mm,inner sep=0.1mm]  at (0,1.7) {\small $b$};
\node (h) [circle,fill=none,draw=black,minimum size=4mm,inner sep=0.1mm]  at (1.15,0) {\small $c$};
\node (a) [label={[xshift=0cm, yshift=-0.27cm]{\footnotesize $x_1$}},circle,fill=none,draw=none,minimum size=2mm,inner sep=0mm]  at (-1.8,0.9) {};
\node (b) [label={[xshift=-0.05cm, yshift=-0.37cm,]{\footnotesize $x_3$}},circle,fill=none,draw=none,minimum size=2mm,inner sep=0mm]  at (-2.2,-0.3) {};
\node (c) [label={[xshift=-0.015cm, yshift=-0.37cm,]{\footnotesize $x_2$}},circle,fill=none,draw=none,minimum size=2mm,inner sep=0mm]  at (-1.1,-1.05) {};
\node (d) [label={[xshift=-0.05cm, yshift=-0.28cm,]{\footnotesize $y_4$}},circle,fill=none,draw=none,minimum size=2mm,inner sep=0mm]  at (-0.8,2.4) {};
\node (e) [label={[xshift=0.05cm, yshift=-0.28cm,]{\footnotesize $y_3$}},circle,fill=none,draw=none,minimum size=2mm,inner sep=0mm]  at (0.8,2.4) {};
\node (i) [label={[xshift=0.15cm, yshift=-0.27cm,]{\footnotesize $x_5$}},circle,fill=none,draw=none,minimum size=0mm,inner sep=0mm]  at (-0.15,-0.3) {};
\node (i1) [label={[xshift=-0.24cm, yshift=-0.13cm,]{\footnotesize $x_4$}},label={[xshift=0.01cm, yshift=0.22cm,]{\footnotesize $y_2$}},circle,fill=none,draw=none,minimum size=0mm,inner sep=0mm]  at (-0.625,0.65) {};
\node (i2) [label={[xshift=0.02cm, yshift=0.22cm,]{\footnotesize $y_1$}},label={[xshift=0.25cm, yshift=-0.14cm,]{\footnotesize $z_2$}},circle,fill=none,draw=none,minimum size=0mm,inner sep=0mm]  at (0.625,0.65) {};
\node (k) [label={[xshift=0.05cm, yshift=-0.4cm,]{\footnotesize $z_1$}},circle,fill=none,draw=none,minimum size=2mm,inner sep=0mm]  at (2.14,-0.4) {};
\node (l) [label={[xshift=-0.03cm, yshift=-0.38cm,]{\footnotesize $z_3$}},circle,fill=none,draw=none,minimum size=2mm,inner sep=0mm]  at (0.5,-0.87) {};
\draw (h)--(k);
\draw (h)--(l);
\draw (f)--(g);
\draw (f)--(i);
\draw (g)--(h);
\draw (f)--(a);
\draw (f)--(b);
\draw (f)--(c);
\draw (g)--(d);
\draw (g)--(e);
\draw (0.65,0.95)--(0.47,0.83);
\draw (-0.65,0.95)--(-0.47,0.83);
\end{tikzpicture}
\end{center}
\vspace{-0.4cm}
\hfill$\square$
\end{example}

In moving from graphs to trees, we will  additionally differentiate the classes of extended unrooted trees with respect to the shape of corollas they contain. Let ${\EuScript T}$ be a connected  graph  with no loops, multiple edges and cycles. \\[-0.5cm]
\begin{itemize}
\item If ${\it Cor}({\EuScript T})$ consists only of ordinary corollas, then ${\EuScript T}$ is an {\em ordinary unrooted tree}.\\[-0.5cm]
\item If ${\it Cor}({\EuScript T})$ is a singleton with a special corolla, then ${\EuScript T}$ is an {\em exceptional unrooted tree}.\\[-0.5cm]
\item An {\em unrooted tree} is either an ordinary unrooted tree or an exceptional unrooted tree.\\[-0.5cm]
\end{itemize}
\begin{example}
The last graph in {\textsc{Example}} \ref{ex2} is an ordinary unrooted tree. \\[0.1cm]
\indent The graph $\{(x,y); {\it id}_{\{x,y\}}\}$ is an exceptional unrooted tree. We depict it as 
\begin{center}
\begin{tikzpicture}
\node (i) [label={[xshift=0cm, yshift=-0.2cm,]{\footnotesize $x$}},circle,fill=none,draw=none,minimum size=0mm,inner sep=0mm]  at (-0.15,0.3) {};
\node (j) [label={[xshift=0cm, yshift=-0.32cm,]{\footnotesize $y$}},circle,fill=none,draw=none,minimum size=0mm,inner sep=0mm]  at (-0.15,-0.3) {};
 \draw (0,0.5)--(0,-0.5);
\draw (0.1,0)--(-0.1,0);
\end{tikzpicture}
\end{center}

\indent The graph $\{a(x_1,x_2,x_3),b(y_1,y_2),(z_1,z_2);\sigma\}$, where $\sigma=(x_3\,y_2)(y_1\, z_1)$,  depicted as
\begin{center}
\begin{tikzpicture}
 \node (f) [circle,fill=none,draw=black,minimum size=4mm,inner sep=0.1mm]  at (-1,0) {\small $a$};
\node (g) [circle,fill=none,draw=black,minimum size=4mm,inner sep=0.1mm]  at (1,0) {\small $b$};
\node (a) [label={[xshift=-0.08cm, yshift=-0.38cm,]{\footnotesize $x_1$}},circle,fill=none,draw=none,minimum size=2mm,inner sep=0mm]  at (-1.85,0.6) {};
\node (c) [label={[xshift=-0.08cm, yshift=-0.38cm,]{\footnotesize $x_2$}},circle,fill=none,draw=none,minimum size=2mm,inner sep=0mm]  at (-1.85,-0.6) {};
\node (d) [label={[xshift=-0.1cm, yshift=-1cm,]{\footnotesize $y_1$}},circle,fill=none,draw=none,minimum size=2mm,inner sep=0mm]  at (1.9,0.8) {};
\node (k) [circle,fill=none,draw=none,minimum size=0mm,inner sep=0mm]  at (2,0) {};
\node (i) [label={[xshift=-0.2cm, yshift=-0.17cm,]{\footnotesize $x_3$}},label={[xshift=0.2cm, yshift=-0.18cm,]{\footnotesize $y_2$}},circle,fill=none,draw=none,minimum size=0mm,inner sep=0mm]  at (0,0.1) {};
\node (z_1) [label={[xshift=-0.3cm, yshift=-0.18cm,]{\footnotesize $z_1$}},circle,fill=none,draw=none,minimum size=0mm,inner sep=0mm]  at (2.5,0.1) {};
\node (z_2) [label={[xshift=0.3cm, yshift=-0.18cm,]{\footnotesize $z_2$}},circle,fill=none,draw=none,minimum size=0mm,inner sep=0mm]  at (3,0.1) {};
\draw (2,0.1)--(2,-0.1);
\draw (2.75,0.1)--(2.75,-0.1);
\draw (f)--(g)--(3.5,0);
\draw (f)--(a);
\draw (f)--(c);
\draw (0,0.1)--(0,-0.1);
\end{tikzpicture}
\end{center}
is an extended unrooted tree. It is neither ordinary, nor exceptional unrooted tree.\hfill $\square$
\end{example}
\begin{rem}
Exceptional unrooted trees in our formalism correspond  to   {\em trivial graphs} in the formalism of {\em\cite{kock}}. In the operadic context, these are  {\em trivial trees} of {\em\cite{mss}}. Moreover, the graphs of {\em\cite{kock}}  are our extended unrooted trees, while the graphs of {\em\cite{gra}} are our ordinary unrooted trees.
\end{rem}
\begin{rem}
Every unrooted tree is an extended unrooted tree. On the other hand, every unrooted tree containing at least one ordinary and one special corolla (or at least  two special corollas) is an extended unrooted tree, but not an unrooted tree (in the narrow sense).
\end{rem}

\indent A {\em subtree} of an (extended) unrooted tree ${\EuScript T}$ is a connected, non-empty subgraph of ${\EuScript T}$.  We say that a subtree ${\EuScript S}$ of ${\EuScript T}$ is {\em  proper}   if ${\it Cor}({\EuScript  S})\neq {\it Cor}({\EuScript T})$.\\[0.1cm]
\indent A {\em decomposition of an (extended) unrooted tree}  ${\EuScript T}$ ({\em with involution} $\sigma$) is a set of subtrees of ${\EuScript T}$ defined recursively as follows: \label{marker}
\begin{itemize}
\item[$\diamond$]  $\{{\EuScript T}\}$ is a decomposition of ${\EuScript T}$,
\item[$\diamond$] if  ${\EuScript T}_1$ and ${\EuScript T}_2$ are subtrees of  ${\EuScript T}$ with involutions  $\sigma_1$ and $\sigma_2$, respectively, such that  ${\it Cor}({\EuScript T}_1)\cap {\it Cor}({\EuScript T}_2)=\emptyset$, ${\it Cor}({\EuScript T})={\it Cor}({\EuScript T}_1)\cup {\it Cor}({\EuScript T}_2)$ and there exist $x\in {\it FV}({\EuScript T}_1)$ and $y\in {\it FV}({\EuScript T}_2)$ such that   
\[
    \sigma(v) = \left\{\begin{array}{ll}
        \sigma_1(v), & \text{if } v\in V({\EuScript T}_1)\backslash\{x\}\\
        \sigma_2(v), & \text{if } v\in V({\EuScript T}_2)\backslash\{y\}\\
        y, & \text{if } v=x,
        \end{array}  \right.
  \]
and if $\{{\EuScript T}_{11},\dots ,{\EuScript T}_{1n}\}$ is a decomposition of ${\EuScript T}_1$ and  $\{{\EuScript T}_{21},\dots ,{\EuScript T}_{2m}\}$ is a decomposition of ${\EuScript T}_2$, then $\{{\EuScript T}_{11},\dots ,{\EuScript T}_{1n},{\EuScript T}_{21},\dots ,{\EuScript T}_{2m}\}$ is a decomposition of ${\EuScript T}$.
\end{itemize}

\indent We now define $\alpha$-equivalence on extended unrooted trees. Suppose first that\vspace{-0.1cm} $${\EuScript T}=\{a(x_1,{\dots},x_n),{\dots} ;\sigma\} \vspace{-0.1cm} $$ is an ordinary unrooted tree, with  $a\in{\underline{\EuScript C}}(X)$,  $x_i\in FV(a)\backslash FV({\EuScript T})$ and $\sigma(x_i)=y_j$. Let $\tau:X'\rightarrow X$ be a bijection that renames $x_i$ to $z$, where $z$ is  fresh with respect to  $V({\EuScript T})\backslash\{x_i\}$. The $\alpha$-equivalence (for ordinary unrooted trees) is the smallest equivalence relation generated by equalities\vspace{-0.1cm} $$(a(x_1,\dots,x_{i-1},x_i,x_{i+1},\dots,x_n),{\dots} ;\sigma)=_{\alpha}(a^{\tau}(x_1,{\dots},x_{i-1},z,x_{i+1},\dots,x_n),{\dots};\sigma'),\vspace{-0.1cm} $$ where  $\sigma'=\sigma$ on $V({\EuScript T})\backslash\{x_i,y_j\}$ and $\sigma'(z)=y_j$.  This definition generalises in a natural way to extended unrooted trees: to the set of generators from above we add the clauses \vspace{-0.1cm} $$\{(x,y),\dots ; \sigma\}=_{\alpha}\{(x,z),\dots ;\sigma'\} ,\vspace{-0.1cm}$$ where, for some variable $x_i$, $\sigma(y)=x_i$ (i.e. $y$ is not a free variable of the tree on the left), $z$ is fresh in the same sense as earlier, and $\sigma'$ is the obvious modification of $\sigma$.  In simple terms, we consider $\alpha$-equivalent any two trees such that we can obtain one from another only by renaming variables which are not fixed points of the corresponding involutions. \\[0.1cm]
\indent We shall denote with $[{\EuScript T}]_{\alpha}$  the $\alpha$-equivalence class determined by an (extended) unrooted tree ${\EuScript T}$. Finally,  we shall denote with  \texttt{T}$_{{\underline{\EuScript C}}}(X)$ (resp.  \texttt{eT}$_{{\underline{\EuScript C}}}(X)$)   the set of all $\alpha$-equivalence classes of unrooted trees (resp. extended unrooted trees) whose parameters belong to $P_{{\underline{\EuScript C}}}$ and whose free variables are given by the set $X$. If $X$ is a two-element set, this definition includes the possibility that an unrooted tree has $0$ parameters, in which case   the corresponding equivalence class is determined by the appropriate exceptional unrooted tree. We shall write  \texttt{T}$_{{\underline{\EuScript C}}}$ (resp. \texttt{eT}$_{{\underline{\EuScript C}}}$) for the collection of all unrooted trees (resp. extended unrooted trees) generated by $P_{{\underline{\EuScript C}}}$.
\subsubsection{The monad of unrooted trees}\label{monad1}
The monad of unrooted trees is the monad $({\EuScript M},\mu,\eta)$ on the functor category ${\bf Set}^{{\bf Bij}^{op}}$, defined as follows. The endofunctor ${\EuScript M}$ is defined by $${\EuScript M}(\underline{\EuScript C})(X)={\tt{T}}_{\underline{\EuScript C}}(X).$$
\indent The component ${\eta_{\underline{\EuScript C}}}_X:\underline{\EuScript C}(X)\rightarrow {\EuScript M}({\underline{\EuScript C}})(X)$ of the monad unit associates to $a\in\underline{\EuScript C}(X)$ the isomorphism class of the unrooted tree $\{a(x_1,\dots,x_n),{\it id}_X\}$, where $X=\{x_1,\dots,x_n\}$.\\[0.1cm]
\indent The action of the monad multiplication, typically (and incompletely) described  as ``flattening'' 
in the literature (which is acceptable only if one forgets about units), deserves more attention.\\
\indent In order to obtain its complete description, we first build a rewriting system on ${\tt{eT}}_{{\underline{\EuScript C}}}$. The rewriting relation $\rightarrow$ on classes of ${\tt{eT}}_{{\underline{\EuScript C}}}$ is canonically induced by the  reflexive and transitive closure of the union of the following   reductions, defined on their representatives:  \vspace{-0.1cm} $$(a(x_1,\dots,x_{i-1},x_i,x_{i+1},\dots,x_n),(y,z),\dots;\sigma)\rightarrow (a^{\tau}(x_1,\dots,x_{i-1},z,x_{i+1},\dots,x_n),\dots;\sigma')\, ,\vspace{-0.1cm}$$ where $\sigma(x_i)=y$, $\tau$ renames $x_i$ to $z$, and $\sigma'$ is the obvious restriction of $\sigma$, and $$((x,y),(u,v),\dots ;\sigma)\rightarrow ((x,v),\dots ;\sigma')\, ,\vspace{-0.1cm}$$ where $\sigma(y)=u$, and $\sigma'$ is  again the obvious restriction of $\sigma$.
\begin{lem}\label{conf}
The rewriting system $(${\em\texttt{eT}}$_{{\underline{\EuScript C}}},\rightarrow)$ is confluent and terminating.
\end{lem} 
\begin{proof}
The termination of the system is obvious: in an arbitrary reduction sequence, each subsequent tree has one special corolla less, and the sequence finishes either when all of them are exhausted (in the case when the initial tree has at least one ordinary corolla), or when there is only one special corolla left (in the case when the initial tree consists only of special corollas). Due to the connectedness of  unrooted trees, all  special corollas (except one in the latter case) will indeed be exhausted. Clearly, the   normal forms are precisely the unrooted trees of \texttt{T}$_{{\underline{\EuScript C}}}$. \\
\indent  If ${\EuScript T}_1$ and ${\EuScript T}_2$ are reduced from ${\EuScript T}\in $ \texttt{eT}$_{{\underline{\EuScript C}}}$  in one step, and if  $s_1$ and $s_2$ are the special corollas   involved in the respective reductions, the local confluence is proved by case analysis, with respect to whether $s_1$ and $s_2$ are equal or not. By Fact \ref{fact3}, this establishes confluence. 
\end{proof}
By Lemma \ref{conf}, an arbitrary normal form ${\it nf}({\EuScript T})$ of an extended unrooted tree ${\EuScript T}$, with respect to $\rightarrow$, determines a unique $\alpha$-equivalence class  $[{\it nf}({\EuScript T})]_{\alpha}$ in \texttt{T}$_{{\underline{\EuScript C}}}$. It is easily seen that, for every finite set $X$, this assignment gives rise to the function ${\it nf}_X:{\tt{eT}}_{\underline{\EuScript C}}(X)\rightarrow {\tt{T}}_{\underline{\EuScript C}}(X)$, determined by \begin{equation}\label{assi}{\it nf}_X:[{\EuScript T}]_{\alpha}\mapsto [{\it nf}({\EuScript T})]_{\alpha} .\end{equation} 
\indent We now formally define the  flattening (which is still not the monad multiplication) on ${\EuScript M}{\EuScript M}({\underline{\EuScript C}})(X)$. Observe that  the isomorphism classes of \begin{center} ${\EuScript M}{\EuScript M}({\underline{\EuScript C}})(X)={\EuScript M}($\texttt{T}$_{{\underline{\EuScript C}}})(X)=$\texttt{T}$_{\mbox{\texttt{T}}_{{\underline{\EuScript C}}}}(X)$\end{center} are determined by unrooted trees whose parameters are $\alpha$-equivalence classes of unrooted trees themselves (with parameters from $P_{{\underline{\EuScript C}}}$), and whose set of free variables is $X$. We call these  trees {\em two-level   trees}. Syntactically, a   two-level tree of \texttt{T}$_{\mbox{\texttt{T}}_{{\underline{\EuScript C}}}}$ can be either \\[-0.5cm]
\begin{itemize}
\item an exceptional unrooted tree $\{(x,y);id_{\{x,y\}}\},$  in which case we trivially have $0$ parameters  coming from $P_{{\underline{\EuScript C}}}$, or \\[-0.5cm]
\item an ordinary unrooted tree \vspace{-0.1cm} {\small $$\{[\{a(x_1,x_2,\dots),b(y_1,\dots),\dots ;\sigma_1\}]_{\alpha}(x_1,x_2,y_1,\dots),\dots ,[\{(z_1,z_2);id_{\{z_1,z_2\}}\}]_{\alpha}(z_1,z_2),\dots;\sigma\} , $$} 

\vspace{-0.65cm}
whose parameters can be $\alpha$-equivalence classes of both ordinary and exceptional unrooted trees of \texttt{T}$_{\underline{\EuScript C}}$.
\end{itemize} 
\begin{nota}\label{conv}
Let ${\EuScript T}$ be a two-level unrooted tree. Suppose that, for $1\leq i\leq n$, $[{\EuScript T}_i]_{\alpha}\in   {\tt{T}}_{\underline{\EuScript C}}(Y_i)$ are the parameters of ${\EuScript T}$ and let $C_i$ be their corresponding corollas. We then  have  $FV(C_i)=FV({\EuScript T}_i)=Y_i$. The fact that the set of free variables of each corolla is recorded by the data of the corresponding parameter allows us to shorten the notation  by writing  ${\EuScript T}_i$ without listing explicitly the elements of $FV({\EuScript T}_i)$. For example, for the tree from the latter case above, we shall write 
 $$\{[\{a(x_1,x_2,\dots),b(y_1,\dots),\dots ;\sigma_1\}]_{\alpha},\dots ,[\{(z_1,z_2);id_{\{z_1,z_2\}}\}]_{\alpha},\dots;\sigma\}.$$  We shall extend this abbreviation  to trees of $ {\tt{eT}}_{{\tt{eT}}_{\underline{\EuScript C}}}$, and when  the form of the parameters of a two-level tree  is irrelevant, we shall write  $\{[{\EuScript T}_1]_{\alpha},\dots,[{\EuScript T}_n]_{\alpha}, s_1,\dots,s_m;\sigma\}$, where $s_i$ are special corollas.
\end{nota}

\indent The flattening of  two-level unrooted trees is a familly of functions $${\it flat}_X:{\tt{T}}_{{\tt{T}}_{\underline{\EuScript C}}}(X)\rightarrow   {{\tt{eT}}_{\underline{\EuScript C}}}(X),$$ indexed by finite sets, defined by the following two clauses:\\[-0.5cm]
\begin{itemize}
\item ${\it flat}_{\{x,y\}}([\{(x,y);{\it id}_{\{x,y\}}\}]_{\alpha})=[\{(x,y);{\it id}_{\{x,y\}}\}]_{\alpha},$ and\\[-0.5cm]
\item if ${\EuScript T}=\{[\{a(x_1,x_2,\dots),b(y_1,\dots),\dots ;\sigma_1\}]_{\alpha},\dots ,[\{(z_1,z_2);id_{\{z_1,z_2\}}\}]_{\alpha},\dots;\sigma\}$, then   $${\it flat}_X([{\EuScript T}]_{\alpha})=[\{a(x_1,x_2,\dots),b(y_1,\dots),\dots ,(z_1,z_2),\dots;\underline{\sigma}\}]_{\alpha},$$
where, having denoted with ${\EuScript T}_i$, $1\leq i\leq n$, the corollas of ${\EuScript T}$, and with $\sigma_i$ the corresponding involutions,\vspace{-0.1cm} $${\underline{\sigma}}(x)=
\left\{ 
	\begin{array}{ll}
		\sigma(x) & \mbox{if }  x\in \bigcup_{i=1}^{n}FV({\EuScript T}_i)\\[0.1cm]
		\sigma_i(x) & \mbox{if } x\in V({\EuScript T}_i)\backslash FV({\EuScript T}_i)\, .
	\end{array}
\right.\vspace{-0.1cm}$$
\end{itemize}
Observe that ${\it flat}_X([{\EuScript T}]_{\alpha})$ is an $\alpha$-equivalence class of an extended unrooted tree whenever ${\EuScript T}$ contains a corolla that is an exceptional unrooted tree. These are the cases that make a gap between the flattening function and the action of the monad multiplication (which always results in an ordinary unrooted tree). In the same style as we presented the functions ${\it nf}_X$ by \eqref{assi}, 
in what follows, we shall often denote the class ${\it flat}_X([{\EuScript T}]_{\alpha})$ simply by $[{\it flat}({\EuScript T})]_{\alpha}$.  \\[0.1cm]
\indent The complete characterisation of the monad multiplication ${\mu_{\underline{\EuScript C}}}:{\tt{T}}_{{\tt{T}}_{\underline{\EuScript C}}}\rightarrow  {{\tt{T}}_{\underline{\EuScript C}}}$ is defined by $${\mu_{\underline{\EuScript C}}}_X={\it nf}_X\circ {\it flat}_X.$$
Therefore,   for $[{\EuScript T}]_{\alpha}\in {\tt{T}}_{{\tt{T}}_{\underline{\EuScript C}}}(X)$, we have $${\mu_{\underline{\EuScript C}}}_X:[{\EuScript T}]_{\alpha}\mapsto [{\it nf}({\it flat}({\EuScript T}))]_{\alpha}.$$
Hense,  in the presence of units, this action is indeed more than just ``flattening". \\[0.1cm]
\indent We now prepare the grounds for the proof that $({\EuScript M},\mu,\eta)$ is indeed a monad. \\[0.1cm]
\indent The domain of   flattening is extended in a natural way to ${{\EuScript M}}'{{\EuScript M}}'({\underline{\EuScript C}})$, where ${{\EuScript M}}'({\underline{\EuScript C}})(X)=$ \texttt{eT}$_{\underline{\EuScript C}}(X)$. The clause that needs to be added to encompass \texttt{eT}$_{\mbox{\texttt{eT}}_{{\underline{\EuScript C}}}}(X)$ concerns  two-level trees of the form
$$\{[\{a(x_1,x_2,\dots),b(y_1,\dots),(z_1,z_2)\dots ;\sigma_1\}]_{\alpha},\dots ,[\{(u_1,u_2);id_{\{u_1,u_2\}}\}]_{\alpha},\dots,(v_1,v_2),\dots;\sigma\} ,$$ i.e. extended unrooted trees whose set of corollas allows special corollas and  classes of extended  unrooted trees. Let us denote with ${\EuScript T}$ the above tree, and let ${\it Cor}_{s}({\EuScript T})$ be the set of its special corollas. The flattening of $[{\EuScript T}]_{\alpha}$ is defined simply as $${\it flat}_X([{\EuScript T}]_{\alpha})=[\{a(x_1,x_2,\dots),b(y_1,\dots),(z_1,z_2),\dots,(u_1,u_2),\dots,(v_1,v_2),\dots;\underline{\sigma}\}]_{\alpha} ,$$ with $\underline{\sigma}$ being  defined exactly like before for the variables coming from ${\it Cor}({\EuScript T})\backslash {\it Cor}_s({\EuScript T})$, while we set $\underline{\sigma}(x)=\sigma(x)$ for all variables $x\in\bigcup_{s\in {\it Cor}_{s}({\EuScript T})}FV(s)$.\\[0.1cm]
\indent For  $[{\EuScript T}]_{\alpha}$ and $[{\EuScript T}']_{\alpha}$ from \texttt{eT}$_{\mbox{\texttt{eT}}_{{\underline{\EuScript C}}}}$, the following two   lemmas give conditions that ensure that ${\it flat}([{\EuScript T}]_{\alpha})\rightarrow {\it flat}([{\EuScript T}']_{\alpha})$, in the instance $($\texttt{eT}$_{\mbox{\texttt{eT}}_{{\underline{\EuScript C}}}},\rightarrow)$  of the rewriting system defined earlier. 
\begin{lem}\label{fl1}
For $[{\EuScript T}_1]_{\alpha},[{\EuScript T}_2]_{\alpha}\in{\tt{eT}}_{{\tt{eT}}_{\underline{\EuScript C}}}(X)$, if $[{\EuScript T}_1]_{\alpha}\rightarrow [{\EuScript T}_2]_{\alpha}$   in $({{\tt{eT}}}_{{\tt{eT}}_{\underline{\EuScript C}}},\rightarrow)$,   then ${\it flat}_X([{\EuScript T}_1]_{\alpha})\rightarrow {\it flat}_X([{\EuScript T}_2]_{\alpha})$  in $({{\tt{eT}}_{\underline{\EuScript C}}},\rightarrow)$.
\end{lem}

\begin{lem}\label{fl2} For $[\{[{\EuScript T}_1]_{\alpha},\dots,[{\EuScript T}_n]_{\alpha} ,s_1,\dots,s_m;\sigma\}]_{\alpha}\in{\tt{eT}}_{{\tt{eT}}_{\underline{\EuScript C}}}(X)$ and $1\leq j\leq n$, if $[{\EuScript T_j}]_{\alpha}\rightarrow [{\EuScript T}_j']_{\alpha}$ in  $({{\tt{eT}}_{\underline{\EuScript C}}},\rightarrow)$, then   {\small $${\it flat}_X([\{[{\EuScript T}_1]_{\alpha},\dots,[{\EuScript T}_j]_{\alpha},\dots,[{\EuScript T}_n]_{\alpha}  ,s_1,\dots,s_m;\sigma\}]_{\alpha})\rightarrow {\it flat}_X([\{[{\EuScript T}_1]_{\alpha},\dots,[{\EuScript T}'_j]_{\alpha},\dots,[{\EuScript T}_n]_{\alpha} ,s_1,\dots,s_m;\sigma\}]_{\alpha})$$}

\vspace{-0.54cm}
\noindent in  $({{\tt{eT}}_{\underline{\EuScript C}}},\rightarrow)$
\end{lem}
Relying on   Lemma \ref{fl1} and Lemma \ref{fl2}, we  obtain the following two equivalent characterisations of the monad multiplication.
\begin{lem}\label{thmhm}
For $[{\EuScript T}]_{\alpha}=[\{[{\EuScript T}_1]_{\alpha},\dots,[{\EuScript T}_n]_{\alpha} ,s_1,\dots,s_m;\sigma\}]_{\alpha}\in{\tt{eT}}_{{\tt{eT}}_{\underline{\EuScript C}}}(X)$ the following  claims hold:
\begin{itemize}
\item[1.] ${\it nf}(flat({\EuScript T}))=_{\alpha}{\it nf}(flat({\it nf}({\EuScript T}))),$
\item[2.] ${\it nf}(flat({\EuScript T}))=_{\alpha}{\it nf}(flat(\{[{\it nf}({\EuScript T}_1)]_{\alpha},\dots,[{\it nf}({\EuScript T}_n)]_{\alpha} ,s_1,\dots,s_m ;\sigma\})).$
\end{itemize}
\end{lem}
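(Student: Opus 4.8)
The plan is to derive both equalities from Lemma~\ref{fl1}, Lemma~\ref{fl2} and the confluence and termination of the rewriting system $\rightarrow$. The single piece of abstract rewriting theory I will use throughout is the following observation: in a confluent and terminating system, whenever $A\rightarrow B$ one has ${\it nf}(A)=_{\alpha}{\it nf}(B)$, because ${\it nf}(A)$ and ${\it nf}(B)$ are then both normal forms reachable from $A$ --- the former directly, the latter via $B$ --- and hence agree up to $\alpha$-conversion by confluence. I will also use that the confluence and termination statement, though phrased for \texttt{eVT}$_{\EuScript C}$, is insensitive to the choice of base functor and therefore applies equally to its two-level instance \texttt{eVT}$_{\mbox{\texttt{eVT}}_{\EuScript C}}$: in particular, for ${\EuScript T}\in$ \texttt{eVT}$_{\mbox{\texttt{eVT}}_{\EuScript C}}$ the normal form ${\it nf}({\EuScript T})$ is well-defined up to $\alpha$, it again lies in \texttt{eVT}$_{\mbox{\texttt{eVT}}_{\EuScript C}}$ (so that ${\it flat}$ is defined on it), and $\rightarrow$ preserves the set of free variables. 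Recall also that ${\it flat}$ of a two-level tree may land in \texttt{eVT}$_{\EuScript C}$ rather than \texttt{VT}$_{\EuScript C}$ (when special corollas or exceptional parameters are present), which is exactly why ${\it nf}$ has to be understood here as the normal-form operation on \emph{extended} Vernon trees.

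For part a), I would start from the reduction ${\EuScript T}\rightarrow {\it nf}({\EuScript T})$, which holds in \texttt{eVT}$_{\mbox{\texttt{eVT}}_{\EuScript C}}$ by the very definition of ${\it nf}$. Applying Lemma~\ref{fl1} yields ${\it flat}({\EuScript T})\rightarrow {\it flat}({\it nf}({\EuScript T}))$ in \texttt{eVT}$_{\EuScript C}$. The observation above, with $A={\it flat}({\EuScript T})$ and $B={\it flat}({\it nf}({\EuScript T}))$, then gives ${\it nf}({\it flat}({\EuScript T}))=_{\alpha}{\it nf}({\it flat}({\it nf}({\EuScript T})))$, which is exactly claim~a).

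For part b), I would rewrite the parameters of ${\EuScript T}$ to their normal forms one at a time. First choose representatives of the classes ${\it nf}({\EuScript T}_1),\dots,{\it nf}({\EuScript T}_n)$ whose non-free variables are pairwise disjoint and disjoint from $V({\EuScript T})$; this is harmless, since it affects matters only up to $\alpha$-conversion, and it guarantees that the two-level trees assembled below, in particular $\{{\it nf}({\EuScript T}_1),\dots,{\it nf}({\EuScript T}_n),s_1,\dots,s_m;\sigma\}$, are legitimate elements of \texttt{eVT}$_{\mbox{\texttt{eVT}}_{\EuScript C}}$ --- indeed $FV({\it nf}({\EuScript T}_i))=FV({\EuScript T}_i)$, so $\sigma$ still acts correctly. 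Since ${\EuScript T}_i\rightarrow {\it nf}({\EuScript T}_i)$ for every $i$, iterating Lemma~\ref{fl2} --- at the $i$-th step with $j=i$, applied to the two-level tree in which ${\EuScript T}_1,\dots,{\EuScript T}_{i-1}$ have already been replaced by ${\it nf}({\EuScript T}_1),\dots,{\it nf}({\EuScript T}_{i-1})$ --- produces $${\it flat}({\EuScript T})\rightarrow {\it flat}(\{{\it nf}({\EuScript T}_1),\dots,{\it nf}({\EuScript T}_n),s_1,\dots,s_m;\sigma\}) .$$ One final appeal to the observation above gives claim~b).

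I do not expect a genuine obstacle here: the combinatorial content of the statement is entirely packaged in Lemma~\ref{fl1}, Lemma~\ref{fl2} and the confluence/termination result, all of which are already available. The two points that call for (mild) care are the remark that confluence and termination carry over unchanged to the two-level rewriting system on \texttt{eVT}$_{\mbox{\texttt{eVT}}_{\EuScript C}}$, and the variable bookkeeping in part b) --- selecting $\alpha$-representatives of the ${\it nf}({\EuScript T}_i)$ with pairwise disjoint bound variables so that the composite two-level tree is well-formed and the involution $\sigma$ is left intact.
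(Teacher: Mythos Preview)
Your proposal is correct and follows essentially the same route as the paper: both arguments reduce a) to Lemma~\ref{fl1} and b) to iterated applications of Lemma~\ref{fl2}, then invoke confluence and termination of $(\texttt{eVT}_{\EuScript C},\rightarrow)$ to conclude that the normal forms agree. Your version is in fact slightly more explicit than the paper's on two points the paper leaves tacit---that the rewriting result transports to the two-level system $\texttt{eVT}_{\texttt{eVT}_{\EuScript C}}$, and the $\alpha$-bookkeeping needed so that $\{{\it nf}({\EuScript T}_1),\dots,{\it nf}({\EuScript T}_n),s_1,\dots,s_m;\sigma\}$ is well-formed---but the underlying argument is the same.
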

\begin{proof}
By the termination of $($\texttt{eT}$_{\mbox{\texttt{eT}}_{{\underline{\EuScript C}}}},\rightarrow)$, we have  ${\EuScript T}\rightarrow {\it nf}(\EuScript T)$, and then, by Lemma \ref{fl1} and the termination of $($\texttt{eT}$_{{\underline{\EuScript C}}},\rightarrow)$, we know that,  in $($\texttt{eT}$_{{\underline{\EuScript C}}},\rightarrow)$, \vspace{-0.1cm}$$flat(\EuScript T)\rightarrow flat({\it nf}(\EuScript T))\rightarrow {\it nf}(flat({\it nf}(\EuScript T)))  \vspace{-0.1cm}.$$ On the other hand, by the termination of $($\texttt{eT}$_{{\underline{\EuScript C}}},\rightarrow)$, we  also have that $flat(\EuScript T)\rightarrow {\it nf}(flat(\EuScript T))$. Therefore, the first claim follows by the confluence of $($\texttt{eT}$_{{\underline{\EuScript C}}},\rightarrow)$.\\[0.1cm]
\indent As for the second claim, by the termination of $({\tt{eT}}_{\underline{\EuScript C}},\rightarrow)$, we have  ${{\EuScript T}_i}\rightarrow {\it nf}({\EuScript T}_i)$, for all $i\in I$. Hence, by Lemma \ref{fl2}, and then again by the termination of $({\tt{eT}}_{\underline{\EuScript C}},\rightarrow)$, we get that $$
\begin{array}{rcl}
{\it flat}({\EuScript T})&\rightarrow &{\it flat}(\{[{\it nf}({\EuScript T}_1)]_{\alpha},\dots,[{\it nf}({\EuScript T}_n)]_{\alpha},s_1,\dots,s_m;\sigma\})\\
&\rightarrow &{\it nf}({\it flat}(\{{\it nf}({\EuScript T}_1),\dots,{\it nf}({\EuScript T}_n),s_1,\dots,s_m;\sigma\}))
\end{array}$$ is a reduction sequence of  $($\texttt{eT}$_{{\underline{\EuScript C}}},\rightarrow)$. The conclusion follows as in the previous claim.
\end{proof}
On the other hand, by the very definition of flattening on extended unrooted trees, we have the following property.
\begin{lem}\label{assoc_flat} For ${\EuScript T}=\{{\EuScript T}_1,\dots,{\EuScript T}_n;\sigma\}\in$ {\em\texttt{T}}$_{\mbox{\em\texttt{eT}}_{\mbox{\em\texttt{eT}}_{{\underline{\EuScript C}}}}}$ the following equality holds: \vspace{-0.15cm}
$${\it flat}({\it flat}({\EuScript T}))={\it flat}(\{[{\it flat}({\EuScript T}_1)]_{\alpha},\dots,[{\it flat}({\EuScript T}_n)]_{\alpha};\sigma\}).$$ 
\end{lem}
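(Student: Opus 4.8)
The plan is to unfold the definition of flattening on both sides and check that they yield literally the same extended Vernon tree, by comparing corollas and involutions. The exceptional case ($\EuScript T$ an exceptional Vernon tree) is immediate, since there ${\it flat}$ acts as the identity; so assume $\EuScript T$ is ordinary — as the notation $\{\EuScript T_1,\dots,\EuScript T_n;\sigma\}$ already signals, it carries no special corollas at the outermost level — and fix a representative in which all variables occurring at any of the three levels are pairwise distinct (as permitted by $\alpha$-conversion, and as is needed anyway for the nested flattenings to be well-defined). Write each parameter as $\EuScript T_i=\{g^i_1,\dots,g^i_{k_i},s^i_1,\dots,s^i_{m_i};\sigma_i\}$ with $g^i_j\in\texttt{eVT}_{\EuScript C}$ and the $s^i_l$ special, and each $g^i_j=\{h^{ij}_1,\dots,h^{ij}_{p_{ij}},t^{ij}_1,\dots,t^{ij}_{q_{ij}};\sigma_{ij}\}$ with $h^{ij}_l\in P_{\EuScript C}$ and the $t^{ij}_r$ special (allowing, as usual, some $\EuScript T_i$ or $g^i_j$ to be exceptional).

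First I would read off the corollas. Applying the definition of ${\it flat}$ to the left-hand side twice, ${\it flat}(\EuScript T)$ exposes the corollas $g^i_j$ and $s^i_l$, and ${\it flat}({\it flat}(\EuScript T))$ further exposes the $h^{ij}_l$ and $t^{ij}_r$; so its set of corollas is $\{h^{ij}_l\}_{i,j,l}\cup\{t^{ij}_r\}_{i,j,r}\cup\{s^i_l\}_{i,l}$. On the right-hand side, ${\it flat}(\EuScript T_i)$ has corollas $\{h^{ij}_l\}_{j,l}\cup\{t^{ij}_r\}_{j,r}\cup\{s^i_l\}_l$, and — noting first, by inspecting the two cases in the definition of $\underline\sigma$, that ${\it flat}$ preserves free variables, i.e. $FV({\it flat}(\EuScript T_i))=FV(\EuScript T_i)$, so that $\sigma$ is still a legal involution on $\bigcup_i FV(\EuScript T_i)$ and $\{{\it flat}(\EuScript T_1),\dots,{\it flat}(\EuScript T_n);\sigma\}$ is a well-defined extended Vernon tree (the tree conditions being inherited from $\EuScript T$, each ${\it flat}(\EuScript T_i)$ being itself a tree) — flattening it exposes the union over $i$ of those corollas, i.e. the same set. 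Hence the corolla-sets agree.

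Next I would match the involutions. Partition all variables into three classes: \emph{level $ij$}, where $x\in V(g^i_j)\setminus FV(g^i_j)$; \emph{level $i$}, where $x\in V(\EuScript T_i)\setminus FV(\EuScript T_i)$; and \emph{top level}, where $x\in\bigcup_i FV(\EuScript T_i)$. Disjointness of the free variables of corollas, together with the freshness assumption, makes this a genuine partition. For each class I would trace the nested applications of the clauses defining ${\it flat}$ on both sides, using $FV({\it flat}(-))=FV(-)$ to decide which clause fires; the outcome is that a level-$ij$ variable is sent to $\sigma_{ij}(x)$ by both sides, a level-$i$ variable to $\sigma_i(x)$ by both, and a top-level variable to $\sigma(x)$ by both. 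With equal corollas and equal involutions, the two trees are equal.

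All the computations are routine — the content is just that flattening erases all the brackets and bracket-erasure is associative — so the main obstacle is bookkeeping: keeping the three families of involutions $\sigma,\sigma_i,\sigma_{ij}$ and the corresponding $FV$-sets straight across the two- versus three-fold nested flattenings, and being careful that special corollas are never consumed and must be tracked as surviving into the final tree. The single genuine ingredient is the observation $FV({\it flat}(-))=FV(-)$, which both makes the right-hand side well-formed and guarantees that ``which clause fires'' is answered consistently on the two sides.
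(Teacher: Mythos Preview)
Your proposal is correct and is precisely the detailed verification that the paper leaves implicit: the paper states this lemma with the preamble ``by the very definition of flattening on extended Vernon trees, we have the following property'' and gives no further proof. Your unfolding of corollas and the three-level case analysis on the involutions (together with the observation $FV({\it flat}(-))=FV(-)$) is exactly what that ``by definition'' amounts to.
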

We  now finally verify the laws of the monad $({\EuScript M},\mu,\eta)$.
\begin{lem}
For natural transformations $\mu:{\EuScript M}{\EuScript M}\rightarrow {\EuScript M}$ and $\eta : 1\rightarrow{\EuScript M}$, the following diagrams commute for every functor ${\underline{\EuScript C}}: {\bf Bij}^{op}\rightarrow {\bf Set}$  and a finite set $X$:
\begin{center}
\begin{tikzpicture}
\node (A)  at (0,1) {\small ${\EuScript M}{\EuScript M}{\EuScript M}({\underline{\EuScript C}})(X)$};
\node (B) at (3.5,1) {\small ${\EuScript M}{\EuScript M}({\underline{\EuScript C}})(X)$};
\node (C) at (0,-1) {\small ${\EuScript M}{\EuScript M}({\underline{\EuScript C}})(X)$};
\node (D) at (3.5,-1) {\small ${\EuScript M}({\underline{\EuScript C}})(X)$};
\path[->,font=\footnotesize]
(A) edge node[above]{${\EuScript M}\mu_{{{\underline{\EuScript C}}}_{X}}$} (B)
(A) edge node[left]{$\mu{\EuScript M}_{{{\underline{\EuScript C}}}_X}$} (C)
(B) edge node[right]{$\mu_{{{\underline{\EuScript C}}}_X}$} (D)
(C) edge node[above]{$\mu_{{{\underline{\EuScript C}}}_X}$} (D);
\end{tikzpicture}
\enspace  \begin{tikzpicture}
\node (A)  at (0,1) {\small ${\EuScript M}({{\underline{\EuScript C}}})(X)$};
\node (B) at (3,1) {\small ${\EuScript M}{\EuScript M}({{\underline{\EuScript C}}})(X)$};
\node (C) at (1.5,-1) {\small ${\EuScript M({{\underline{\EuScript C}}})(X)}$};
\path[->,font=\footnotesize]
(A) edge node[above]{${\EuScript M}\eta_{{{\underline{\EuScript C}}}_X}$} (B)
(A) edge node[left]{$id_{{{\underline{\EuScript C}}}_X}$} (C)
(B) edge node[right]{$\mu_{{{\underline{\EuScript C}}}_X}$} (C);
\end{tikzpicture}\enspace  \begin{tikzpicture}
\node (A)  at (0,1) {\small ${\EuScript M}({{\underline{\EuScript C}}})(X)$};
\node (B) at (3,1) {\small ${\EuScript M}{\EuScript M}({{\underline{\EuScript C}}})(X)$};
\node (C) at (1.5,-1) {\small ${\EuScript M}({{\underline{\EuScript C}}})(X)$};
\path[->,font=\footnotesize]
(A) edge node[above]{$\eta{\EuScript M}_{{{\underline{\EuScript C}}}_X}$} (B)
(A) edge node[left]{$id_{{{\underline{\EuScript C}}}_X}$} (C)
(B) edge node[right]{$\mu_{{{\underline{\EuScript C}}}_X}$} (C);
\end{tikzpicture}
\end{center}
\end{lem}
\begin{proof} We begin with the left diagram. Chasing the  associativity of multiplication includes treating several cases, according to the shape of the unrooted tree of \begin{center}${\EuScript M}{\EuScript M}{\EuScript M}({{\underline{\EuScript C}}})(X)=$\texttt{T}$_{\mbox{\texttt{T}}_{\mbox{\texttt{T}}_{{\underline{\EuScript C}}}}}(X)$\end{center} that we start from. The most interesting is the one starting from (a class determined by) an ordinary unrooted tree with corollas given by ordinary unrooted trees built upon \texttt{T}$_{{\underline{\EuScript C}}}$ and we prove the associativity only for this case. Let, therefore, ${\EuScript T}=\{{\EuScript T}_1,\dots ,{\EuScript T}_n;\sigma\}$.\\
\indent By chasing the diagram to the right-down, the action of ${\EuScript M}\mu_{{{\underline{\EuScript C}}}_X}$ corresponds to corolla-per-corolla flattening of ${\EuScript T}$, followed by taking the respective normal forms. Then  $\mu$ flattens additionally the resulting tree and reduces it to a normal form. These actions make the following sequence of steps: 
$$\begin{array}{rcl}
{\EuScript T}&\mapsto &\{[{\it flat}({\EuScript T}_1)]_{\alpha},\dots,[{\it flat}({\EuScript T}_n)]_{\alpha};\sigma\}\\
&\mapsto&\{[{\it nf}({\it flat}({\EuScript T}_1))]_{\alpha},\dots,[{\it nf}({\it flat}({\EuScript T}_n))]_{\alpha};\sigma\}\\
&\mapsto& {\it flat}(\{[{\it nf}({\it flat}({\EuScript T}_1))]_{\alpha},\dots,[{\it nf}({\it flat}({\EuScript T}_n))]_{\alpha};\sigma\})\\
&\mapsto& {\it nf}({\it flat}(\{[{\it nf}({\it flat}({\EuScript T}_1))]_{\alpha},\dots,[{\it nf}({\it flat}({\EuScript T}_n))]_{\alpha};\sigma\}))=R.
\end{array}$$
\indent The action $\mu{\EuScript M}_{{{\underline{\EuScript C}}}_{X}}$ on the left-down side of the diagram corresponds to the action of $\mu$ on the tree ${\EuScript T}$ itself, which flattens it and reduces it to a normal form. Followed by $\mu$ again, this gives us the following sequence:
$$\begin{array}{rcl}
{\EuScript T}&\mapsto &{\it flat}({\EuScript T})\\
&\mapsto& {\it nf}({\it flat}({\EuScript T}))\\
&\mapsto& {\it flat}({\it nf}({\it flat}({\EuScript T})))\\
&\mapsto& {\it nf}({\it flat}({\it nf}({\it flat}({\EuScript T}))))=L.
\end{array}$$
\indent  Let $R'={\it nf}({\it flat}(\{[{\it flat}({\cal T}_1)]_{\alpha},\dots,[{\it flat}({\cal T}_n)]_{\alpha};\sigma\}))$ and $L'={\it nf}({\it flat}({\it flat}({\cal T})))$. By  Lemma \ref{thmhm}, we have that $R=R'$ and $L=L'$, and, by Lemma \ref{assoc_flat}, we have  $R'=L'$.\\[0.1cm]
\indent  We now verify the unit laws  for the case when $[{\EuScript T}]_{\alpha}\in {\EuScript M}({{\underline{\EuScript C}}})(X)$ is determined by an ordinary unrooted tree. Let, therefore,  ${\EuScript T}=\{a_1(x_1,\dots,x_k),\dots,a_n(y_1,\dots,y_r);\sigma\}.$ \\
\indent  By going to the right-down in the first unit diagram (i.e. the diagram in the middle), the action of ${\EuScript M}_{\eta_{{{\underline{\EuScript C}}}_X}}$ turns each corolla  $a_i$ into a single-corolla unrooted tree ${\EuScript T}_i$, leading to a two-level unrooted tree, which is then  flattened and reduced to a normal form by $\mu$. Therefore, the right-down side sequence is as follows: 
$$\begin{array}{rcl}
{\EuScript T}&\mapsto & \{[\{a_1(x_1,\dots,x_k),{\it id}\}]_{\alpha},\dots,[\{a_n(y_1,\dots,y_r);{\it id}\}]_{\alpha};\sigma\} \\[0.1cm]
&\mapsto & \{a_1(x_1,\dots,x_k),\dots,a_n(y_1,\dots,y_r);\underline{\sigma}\} \\[0.1cm]
&\mapsto & \{a_1(x_1,\dots,x_k),\dots ,a_n(y_1,\dots,y_r);\underline{\sigma}'\}
\end{array} $$
  the resulting tree being  exactly ${\EuScript T}$, since $$\underline{\sigma}'(x)=\underline{\sigma}(x)=\left\{ 
	\begin{array}{ll}
		\sigma(x) & \mbox{if }  x\in  \bigcup_{i=1}^{n}FV({\EuScript T}_i)\\[0.1cm]
		x & \mbox{if } x\in V({\EuScript T}_i)\backslash FV({\EuScript T}_i)  
	\end{array}
\right.=\left\{ 
	\begin{array}{ll}
		\sigma(x) & \mbox{if }  x\in  V({\EuScript T})\\[0.1cm]
		x & \mbox{if } x\in V({\EuScript T}_i)\backslash FV({\EuScript T}_i)  
	\end{array}
\right.=\sigma(x),$$ wherein the last equality holds since $V({\EuScript T}_i)\backslash FV({\EuScript T}_i)=\emptyset$, for all $1\leq i\leq n$.\\
\indent By chasing the second unit diagram to the right, ${\EuScript T}$ will first be turned, by the action of $\eta{\EuScript M}_{{{\underline{\EuScript C}}}_X}$, into a single-corolla two-level tree, which will then be flattened and reduced to a normal form by the action of $\mu$. Therefore, we have the sequence
$$\begin{array}{rcl}
{\EuScript T}&\mapsto & \{[\{a_1(x_1,\dots,x_k),\dots, a_n(y_1,\dots,y_r);\sigma\}]_{\alpha},{\it id}_X\} \\[0.1cm]
&\mapsto & \{a_1(x_1,\dots,x_k),\dots,a_n(y_1,\dots,y_r);\underline{{\it id}_X}\} \\[0.1cm]
&\mapsto & \{a_1(x_1,\dots,x_k),\dots,a_n(y_1,\dots,y_r);\underline{{\it id}_X}'\}
\end{array} $$
For the resulting involution $\underline{{\it id}_X}'$ we have\vspace{-0.1cm} $$\underline{{\it id}_X}'(x)=\underline{{\it id}_X}(x)=\left\{ 
	\begin{array}{ll}
		x & \mbox{if }  x\in  FV({\EuScript T})\\[0.1cm]
		\sigma(x) & \mbox{if } x\in V({\EuScript T})\backslash FV({\EuScript T})  
	\end{array}
\right.=\sigma(x).\vspace{-0.1cm}$$
 Therefore, the resulting tree is exactly ${\EuScript T}$.
\end{proof}
\indent Finally, here is the original definition \cite[Definition 2.1]{Getzler:1994pn} of a cyclic operad, recasted in the new syntactic framework.
\begin{defn}\label{unbiased}
A cyclic operad is an algebra over the monad $({\EuScript M},\mu,\eta)$.
\end{defn}
And, under these syntactic glasses, here is the well-known result about the equivalence of the biased and unbiased definitions.
\begin{thm}\label{th1}
A functor ${\underline{\EuScript C}}:{\bf Bij}^{\it op}\rightarrow {\bf Set}$ is endowed with a cyclic operad structure (as described by Definition \ref{d2}) if and only if it is  endowed  with a structure morphism of an ${\EuScript M}$-algebra.
\end{thm}
Before we introduce the $\mu$-syntax in the following section, and ultimatelly prove Theorem \ref{th1}, we indicate  the biased   structure ``hiding'' in the monad approach we just made. As we shall see, the exceptional unrooted trees will be used as pasting schemes of identities of cyclic operads. 
 \subsubsection{The free cyclic operad structure implicit in  $({\EuScript M},\mu,\eta)$}\label{free-forgetiful} A way to specify the free cyclic operad  over  ${\underline{\EuScript C}}$ is given implicitly in \S \ref{monad1}. The functor $F({\underline{\EuScript C}}):{\bf Bij}^{op}\rightarrow {\bf Set}$, underlying free cyclic operad structure, is  defined by $F({\underline{\EuScript C}})(X)={\tt{T}}_{{\underline{\EuScript C}}}(X)$. In the unbiased approach, the monad from \S \ref{monad1} indeed arose from the adjunction $F\vdash U$, where $U$ is the obvious forgetful functor. Before we introduce the rest of the free cyclic operad structure, we fix some notation.
\begin{nota} For an unrooted tree ${\EuScript T}$, a finite set $V$ and a bijection $\vartheta:V\rightarrow V({\EuScript T})$, we shall denote with ${\EuScript T}^{\vartheta}$ the unrooted tree obtained from ${\EuScript T}$ by renaming its variables in a way dictated by $\vartheta$ and adapting  its corollas accordingly.  More precisely, if $a\in {\it Cor}({\EuScript T})$ is an ordiary corolla,  ${\EuScript T}^{\vartheta}$ will, instead of $a$, contain the corolla  $a^{\vartheta|^{FV(a)}}$, and, if $(x,y)\in {\it Cor}({\EuScript T})$ is a special corolla, ${\EuScript T}^{\vartheta}$ will, instead of $(x,y)$,  contain the corolla  $(\vartheta^{-1}(x),\vartheta^{-1}(y))$.
The involution $\sigma^{\vartheta}$ of ${\EuScript T}^{\vartheta}$ is defined as $\sigma^{\vartheta}(v)=\vartheta^{-1}(\sigma(\vartheta(v)))$, for $v\in V$. 
\end{nota}
\indent For a bijection $\kappa : X'\rightarrow X$, the image $[{\EuScript T}]_{\alpha}^{\kappa}$ of $[{\EuScript T}]_{\alpha}\in$ \texttt{T}$_{{\underline{\EuScript C}}}(X)$ under  \texttt{T}$_{{\underline{\EuScript C}}}(\kappa):$ \texttt{T}$_{{\underline{\EuScript C}}}(X)\rightarrow$ \texttt{T}$_{{\underline{\EuScript C}}}(X')$
is the equivalence class $[{\EuScript T}^{{\kappa}\cup {\varepsilon}}]_{\alpha}$, where ${\varepsilon}:V\rightarrow V({\EuScript T})\backslash X$ is an arbitrary bijection, such that $X'\cap V=\emptyset$. \\[0.1cm]
\indent Let $X$ and $Y$ be non-empty finite sets such that for some $x\in X$ and $y\in Y$ we have $X\backslash \{x\}\cap Y\backslash \{y\}=\emptyset$, and let $[{\EuScript T}_1]_{\alpha}\in $ \texttt{T}$_{{\underline{\EuScript C}}}(X)$, $[{\EuScript T}_2]_{\alpha}\in$ \texttt{T}$_{{\underline{\EuScript C}}}(Y)$. The partial composition operation \vspace{-0.1cm} $$\,{_{x}\bullet_y}\, : \mbox{\texttt{T}}_{{\underline{\EuScript C}}}(X)\times\,\mbox{\texttt{T}}_{\underline{\EuScript C}}(Y)\rightarrow \mbox{ \texttt{T}}_{\underline{\EuScript C}}(X\backslash \{x\}\cup Y\backslash \{y\}) \vspace{-0.1cm}$$ is given as \vspace{-0.1cm}$$\label{ss}[{\EuScript T}_1]_{\alpha}\,{_{x}\bullet_y}\,[{\EuScript T}_2]_{\alpha}=[{\it nf}({\EuScript T})]_{\alpha},$$
where  ${\it Cor}({\EuScript T})$ is obtained by taking the union of the sets of corollas of ${\EuScript T}_1$ and ${\EuScript T}_2$, after having previously adapted them in a way that makes this union disjoint with respect to the variables occuring in it. More precisely, if $\vartheta_1:V_1\rightarrow (V({\EuScript T}_1)\backslash X)\cup\{x\}$ and $\vartheta_2:V_2\rightarrow (V({\EuScript T}_2)\backslash Y)\cup\{y\}$ are bijections such that $V_1\cap V_2=\emptyset$,  then \vspace{-0.1cm}
$${\it Cor}({\EuScript T})=\{C^{(\vartheta_1\cup {\it id}_{X\backslash\{x\}})|^{FV(C)}} \,|\, C\in {\it Cor}({\EuScript T}_1)\}\cup\{D^{(\vartheta_2\cup {\it id}_{Y\backslash\{y\}})|^{FV(D)}} \,|\, D\in {\it Cor}({\EuScript T}_2)\}.\vspace{-0.1cm}$$ 
 If  $\sigma_i$ is the involution  of ${\EuScript T}_i$, $i=1,2$, the involution $\sigma$ of ${\EuScript T}$ is defined as follows:
$${\sigma}(v)=\left\{ 
	\begin{array}{ll}
		\vartheta_1^{-1}(\sigma_1(\vartheta_1(v))) & \mbox{if }  v\in  V_1\backslash\vartheta_1^{-1}(x)\\[0.1cm]
		\vartheta_2^{-1}(y)					& \mbox{if }  v=\vartheta_1^{-1}(x)\\[0.1cm]
		\vartheta_2^{-1}(\sigma_2(\vartheta_2(v))) & \mbox{if }  v\in  \vartheta_2^{-1}(y)\\[0.1cm]
		\vartheta_1^{-1}(x) & \mbox{if }  v=\vartheta_2^{-1}(y)\\[0.1cm]
		v & \mbox{if } v\in X\backslash\{x\}\cup Y\backslash\{y\}\,.\\[0.1cm]
	\end{array}
\right.\vspace{-0.1cm}$$ 
\indent For an arbitrary two-element set $\{y,z\}$, we set ${\it id}_{y,z}=[\{(y,z);id_{\{y,z\}}\}]_{\alpha}$.
\section{$\mu$-syntax} \label{mu-syntax}
Backed up with the graphical intuition of the  biased cyclic operad structure on classes of unrooted trees described in \S \ref{free-forgetiful}, in this section we introduce the $\mu$-syntax. 
\subsection{The language and the equations}\label{lang}
The language of the $\mu$-syntax is built over the collection of parameters $P_{{\underline{\EuScript C}}}$ (see \eqref{pc}) and the set of variables $V$. Unlike the combinator syntax \texttt{cTerm}$_{{\underline{\EuScript C}}}$ from \S \ref{uff}, which has only one kind of expressions, the $\mu$-syntax features two different kinds of typed expressions
\begin{center}
\mybox{
\begin{tabular}{c c c }
\textsc{commands} &  & \textsc{terms} \\[0.05cm]
$c ::=\langle s\,|\,t \rangle  \enspace | \enspace \underline{a}\{t_{1},\dots,t_n\}$ &  & $s,t ::= x \enspace | \enspace \mu x.c$ 
\end{tabular}}
\end{center} 
where $a\in P_{{\underline{\EuScript C}}}$ and $x\in V$, 
whose respective typing judgments we denote with $c:X$ and $X\,|\,s$, where $X$ ranges over finite sets. In expressions $c:X$ and $X\,|\,s$, the set $X$ is the type of the command $c$ and of the term $s$, respectively, and the  backward typing judgment $X\,|\,s$ is used merely to further distinguish the representation of terms and commands. 

The assignment of types to commands and terms is done by the following rules:
\vspace{-0.1cm}\begin{center}\mybox{\begin{tabular}{c} 
$\displaystyle\frac{}{\{x\}\,|\,x}$ \,\enspace $\displaystyle\frac{a\in{\EuScript{C}}(\{x_1,\dots,x_n\})\,\enspace \; Y_{i}\,|\,t_{i} \mbox{ \small for all $i\in\{1,\dots,n\}$}}{\underline{a}\{t_{1},\dots,t_n\}:\bigcup_{i=1}^{n} Y_{i}}\,\enspace \; 
\displaystyle\frac{X\,|\, s\enspace\; Y\,|\, t}{\langle s\, |\, t\, \rangle :X\cup Y}$ \enspace  $\displaystyle\frac{c:X \enspace x\in X}{X\backslash \{x\} \,|\, \mu x.c} $
\end{tabular}}\end{center}
where, in the second rule, the sets $Y_{i}$ are   pairwise disjoint, as are $X$ and $Y$ in the third rule. 

Intuitively, commands mimick operations of the free cyclic operad over the functor ${\underline{\EuScript C}}$, and, thereby, a judgement  $c:X$ should be thought of as describing an unrooted tree whose free variables are precisely  the elements of $X$. On the other hand, terms represent operations with one selected entry and the role of the set $X$ in a judgement $X\,|\, s$ is to label all entries {\em except} the selected one. From the tree-wise perspective, this is represented by an unrooted tree whose set of free variables is $X\cup\{x\}$, where $x$ is precisely the variable bound by  $\mu$ (i.e. the variable placed immediatelly on the right of the symbol $\mu$).
\begin{nota}
We shall sometimes denote the commands introduced by the second typing rule as $\underline{a}\{t_x\,|\,x\in X\}$ (for $a\in{\underline{\EuScript C}}(X)$), or  as $\underline{a}\{\sigma\}$, where $\sigma$ assigns to every $x\in X$ a term $t_x$.  The order of appearance of the $t_{x}$'s in $\underline{a}\{t_{x}\,|\,x\in X\}$ is irrelevant. 
Whenever we  use the notation, say $\underline{a}\{t,s\}$, for $a\in\underline{\EuScript{C}}(\{x,y\})$,  it will be  clear from the context whether we mean  $\underline{a}\{t,s\}=\underline{a}\{\sigma\}$, with  $\sigma(x)=t$ and $\sigma(y)=s$, or with $\sigma$ defined in the other way around. \end{nota}
\indent The way commands are constructed is motivated by  the action of the simultaneous and partial   grafting of unrooted trees, formally defined through the composition operation $_x\bullet_y$ from \S \ref{free-forgetiful}. The command $\underline{a}\{t_x\,|\,x\in X\}$, introduced by the second rule, should be imagined as the simultaneous grafting of the corolla $a$  and the ``surrounding'' trees $t_x$, one for each free variable $x$ of $a$, along the variables bound by $\mu$ in each $t_x$. In the special case when, for some $x\in X$, the corresponding term $t_x$ is a variable, say $u$, this process of grafting reduces to the renaming of the variable $x$ of the corolla $a$ to $u$. Therefore, if all terms corresponding to the elements of $X$   are variables from the set, say, $V=\{u,v,w,...\}$, then the corresponding command is $\underline{a}\{u,v,w,\dots\}$ and  it describes the unrooted tree $\{a^{\sigma}(u,v,w,\dots);{\it id}_V\}$, where $\sigma:V\rightarrow X$ is an arbitrary bijection. The command $\langle s\,|\,t \rangle$  describes the  grafting of unrooted trees represented by the terms $s$ and $t$ along their  variables bound by $\mu$. Therefore, the pattern $\langle \mu x.\underline{\hspace{0.3cm}}\, | \mu y.\underline{\hspace{0.3cm}}\,\rangle$ corresponds  to the  composition   $(-){{_{x}\bullet_{y}}}(-)$ on classes of unrooted trees.  \\[0.1cm]
\indent The equations of the $\mu$-syntax are  \begin{center}\mybox{\begin{tabular}{lcllllllcl}
$\langle s\,|\, t\rangle=\langle t\,|\, s\rangle$ &  & {\small{\texttt{(MU1)}}} &&&&& $\mu x.c=\mu y.c[y/x]$ & & {\small\texttt{(MU3)}} \\[0.25cm]
$\langle\mu x.c\,|\, s\rangle=c[s/x]$ &  & {\small\texttt{(MU2)}} &&&&&  $\underline{a}\{t_x\,|\,x\in X\}=\underline{a^{\sigma}}\{t_{\sigma(y)}\,|\, y\in Y\}$ & & {\small\texttt{(MU4)}} 
\end{tabular}}\end{center}
where,  in {\small\texttt{(MU2)}}, $c[s/x]$ denotes the command $c$ in which the unique occurrence of the variable $x$ has been replaced by the term $s$,
in {\small\texttt{(MU3)}} $y$ has to be fresh with respect to all variables of $c$ except $x$, and in {\small\texttt{(MU4)}} $\sigma:Y\rightarrow X$ is an arbitrary bijection. \\
\indent The  equation {\small\texttt{(MU1)}}  stipulates the symmetry of grafting of unrooted trees, i.e.  the   commutativity of  composition operations  $_x\bullet_y$.\\
\indent The equations {\small\texttt{(MU3)}} and {\small\texttt{(MU4)}} are $\alpha$-conversions. Intuitively,  $\alpha$-conversion   tells that the name of the entry selected for the composition does not matter, which reflects the equivariance  of composition operations  $_x\bullet_y$. In    more simple  terms,  $\alpha$-conversion tells that the function $f(x)$   is the same as the function $f(y)$. \\
\indent The substitution $c[s/x]$, figuring in the  equation  {\small\texttt{(MU2)}}  (as well as the substitution $c[y/x]$ from {\small\texttt{(MU3)}}), must be performed in  the {\em capture-avoiding} manner. This means that the variables which were originally ``free'' (i.e. {\em not bound} by $\mu$)  in $c$ can not become ``captured'' (i.e. {\em bound} by $\mu$) after the substitution is made. This is achieved by renaming,  prior to the substitution, all the bound variables in $c$ and $s$, so that they are all turned mutually distinct, and then performing the appropriate substitution. For example, $$\mu x.\underline{a}\{x,y\}[x/y]\neq \mu x.\underline{a}\{x,x\},\quad \mbox{ but } \quad \mu x.\underline{a}\{x,y\}[x/y]= \mu z.\underline{a^{\sigma}}\{z,y\}[x/y]=\mu z.\underline{a^{\sigma}}\{z,x\},$$ where $\sigma$ renames $x$ to $z$.

The equation {\small\texttt{(MU2)}}) is quite evidently reminescent of the $\beta$-reduction of  $\lambda$-calculus, when considered as a rewriting rule $\langle\mu x.c\,|\, s\rangle\rightarrow c[s/x]$,
and it essentially captures the same idea of function application as $\lambda$-calculus. The intuition becomes more tangible from the point of view of trees: the commands $\langle\mu x.c\,|\, s\rangle$ and $c[s/x]$, equated with  {\small\texttt{(MU2)}},  describe two ways to build (by means of grafting) the same unrooted tree. Here is an example.
\begin{example} \label{ex4}
Consider the unrooted tree \vspace{-0.1cm}$${\EuScript T}=\{a(x_1,x_2,x_3,x_4),b(y_1,y_2,y_3,y_4),c(z_1,z_2);\sigma\},\vspace{-0.1cm}$$ where $\sigma=(x_3\enspace y_1)(x_4\enspace z_1)$. One way to build ${\EuScript T}$ is to graft along $x_4$ and $z_1$ unrooted trees ${\EuScript T}_1=\{a(x_1,x_2,x_3,x_4), b(y_1,y_2,y_3,y_4);\sigma_1\}$, where $\sigma_1=(x_3\enspace y_1)$, and ${\EuScript T}_2=\{c(z_1,z_2);{\it id}_{\{z_1,z_2\}}\}$, singled out with dashed lines in the left picture below:\vspace{-0.25cm}
\begin{center}
\begin{tikzpicture}
\node (f) [circle,fill=none,draw=black,minimum size=4mm,inner sep=0.1mm]  at (-1,0) {$a$};
\node (g) [circle, fill=none,draw=black,minimum size=4mm,inner sep=0.1mm]  at (0.8,1.1) {$b$};
\node (h) [circle, minimum size=4mm,fill=none,draw=black,inner sep=0.1mm]  at (0.1,-1.5) {$c$};
\node (2) [label={[xshift=-0.1cm, yshift=-0.275cm]\footnotesize{$x_1$}},circle, minimum size=1mm,fill=none,draw=none]  at (-1.4,0.7) {};
\node (s) [label={[xshift=0.045cm, yshift=-0.41cm]\footnotesize{$z_2$}},circle,  minimum size=1mm,fill=none,draw=none]  at (0.3,-2.42) {};
\node (4) [label={[xshift=0.05cm, yshift=-0.4
cm]\footnotesize{$x_2$}},circle, minimum size=1mm,fill=none,draw=none]  at (-1.9,-0.55) {};
\node (b) [label={[xshift=0cm, yshift=-0.45cm]\footnotesize{$y_2$}},circle,  minimum size=1mm,fill=none,draw=none]  at (0.35,2.05) {};
\node (c) [label={[xshift=0.05cm, yshift=-0.45cm]\footnotesize{$y_3$}},circle,  minimum size=1mm,fill=none,draw=none]  at (1.75,1.65) {};
\node (d) [label={[xshift=0cm, yshift=-0.41
cm]\footnotesize{$y_4$}},circle, minimum size=1mm,fill=none,draw=none]  at (1.25,0.2) {};
\node (x) [label={[xshift=-0.15cm, yshift=-0.6cm]\footnotesize{$x_3$}},circle,  minimum size=1mm,fill=none,draw=none]  at (0,0.5) {};
\node (y) [label={[xshift=0.2cm, yshift=-0.42cm]\footnotesize{$y_1$}},circle, minimum size=1mm,fill=none,draw=none]  at (0,0.5) {};
\node (x) [label={[xshift=-0.25cm, yshift=-0.28cm]\footnotesize{$x_4$}},circle, minimum size=1mm,fill=none,draw=none]  at (-0.43,-0.9) {};
\node (x) [label={[xshift=0.1cm, yshift=-0.67cm]\footnotesize{$z_1$}},circle, minimum size=1mm,fill=none,draw=none]  at (-0.53,-0.85) {};
\draw (-0.12,0.65)--(-0.006,0.48);
\draw (f)--(g);
\draw (f)--(h);
\draw (h)--(s);
\draw (-0.47,-0.87)--(-0.31,-0.75);
\draw (f)--(-1.45,0.7) ;\draw (f)--(4);
\draw (g)--(b);\draw (g)--(c);\draw (g)--(d);
\draw[rotate=30,dashed] (0.23,0.52) ellipse (1.43cm and 0.7cm);
\draw[dashed] (0.1,-1.5) circle (0.45cm);
\end{tikzpicture}\enspace\enspace\enspace\enspace\enspace\enspace\enspace\enspace\enspace
\begin{tikzpicture}
\node (f) [circle,fill=none,draw=black,minimum size=4mm,inner sep=0.1mm]  at (-1,0) {$a$};
\node (g) [circle, fill=none,draw=black,minimum size=4mm,inner sep=0.1mm]  at (0.8,1.1) {$b$};
\node (h) [circle, minimum size=4mm,fill=none,draw=black,inner sep=0.1mm]  at (0.1,-1.5) {$c$};
\node (2) [label={[xshift=-0.15cm, yshift=-0.35cm]\footnotesize{$x_1$}},circle, minimum size=1mm,fill=none,draw=none]  at (-1.4,0.7) {};
\node (s) [label={[xshift=0.045cm, yshift=-0.41cm]\footnotesize{$z_2$}},circle,  minimum size=1mm,fill=none,draw=none]  at (0.3,-2.42) {};
\node (4) [label={[xshift=0.05cm, yshift=-0.4
cm]\footnotesize{$x_2$}},circle, minimum size=1mm,fill=none,draw=none]  at (-1.9,-0.55) {};
\node (b) [label={[xshift=0cm, yshift=-0.45cm]\footnotesize{$y_2$}},circle,  minimum size=1mm,fill=none,draw=none]  at (0.35,2.05) {};
\node (c) [label={[xshift=0.05cm, yshift=-0.43cm]\footnotesize{$y_3$}},circle,  minimum size=1mm,fill=none,draw=none]  at (1.75,1.65) {};
\node (d) [label={[xshift=0.025cm, yshift=-0.41
cm]\footnotesize{$y_4$}},circle, minimum size=1mm,fill=none,draw=none]  at (1.85,0.65) {};
\node (x) [label={[xshift=-0.15cm, yshift=-0.6cm]\footnotesize{$x_3$}},circle,  minimum size=1mm,fill=none,draw=none]  at (0,0.5) {};
\node (y) [label={[xshift=0.2cm, yshift=-0.42cm]\footnotesize{$y_1$}},circle, minimum size=1mm,fill=none,draw=none]  at (0,0.5) {};
\node (x) [label={[xshift=-0.25cm, yshift=-0.28cm]\footnotesize{$x_4$}},circle, minimum size=1mm,fill=none,draw=none]  at (-0.43,-0.9) {};
\node (x) [label={[xshift=0.1cm, yshift=-0.69cm]\footnotesize{$z_1$}},circle, minimum size=1mm,fill=none,draw=none]  at (-0.53,-0.85) {};
\draw (-0.12,0.65)--(-0.006,0.48);
\draw (f)--(g);
\draw (f)--(h);
\draw (h)--(s);
\draw (-0.47,-0.87)--(-0.31,-0.75);
\draw (f)--(-1.5,0.63) ;\draw (f)--(4);
\draw (g)--(b);\draw (g)--(c);\draw (g)--(d);
\draw[rotate=65,dashed] (-0.15,-0.12) ellipse (1.78cm and 1.35cm);

\end{tikzpicture}
\end{center}
\vspace{-0.2cm}
The unrooted tree ${\EuScript T}_1$ (in the upper part of the left picture) can itself be seen as a grafting, namely the  simultaneous grafting of the corolla $a$ and its surrounding trees: in this case this involves explicit grafting only with the corolla $b$ (along the free variables $x_3$ and $y_1$). This way of constructing ${\EuScript T}$  is described by  the command  \begin{equation}\label{lhs}\langle\mu x_4.\underline{a}\{x_1,x_2,\mu y_1.\underline{b}\{y_1,y_2,y_3,y_4\},x_4\}\,|\, \mu z_1.\underline{c}\{z_1,z_2\}\rangle\,  \tag{*}\end{equation} that witnesses the fact that ${\EuScript T}_1$ and ${\EuScript T}_2$ are connected along their selected free variables $x_4$ and $z_1$, respectively:  $x_4$ and $z_1$ are bound with $\mu$ in the terms corresponding to these two trees. The subterm $\underline{a}\{x_1,x_2,\mu y_1.\underline{b}\{y_1,y_2,y_3,y_4\},x_4\}$ on the left-hand side is the command that accounts for the simultaneous grafting of the corolla $a$ and its surrounding trees, while $\underline{c}\{z_1,z_2\}$ on the right-hand side stands for the corolla $c$. On the other hand, we could have chosen to build the tree ${\EuScript T}$  simply by making the simultaneous grafting of the corolla $a$ and its surrounding trees, as indicated on the picture on the right. This way of building ${\EuScript T}$ is described with the command $\underline{a}\{x_1,x_2,\mu y_1.\underline{b}\{y_1,y_2,y_3,y_4\},\mu z_1.\underline{c}\{z_1,z_2\}\}$, which is, up to substitution, exactly the command   $$\underline{a}\{x_1,x_2, \mu y_1.\underline{b}\{y_1,y_2,y_3,y_4\},x_4\}[\mu z_1.\underline{c}\{z_1,z_2\}/x_4]$$  to which \eqref{lhs} reduces by applying the rewriting rule $\langle\mu x.c\,|\, s\rangle\rightarrow c[s/x]$. \end{example}

\indent We shall denote with $\mu$\texttt{Exp}$_{{\underline{\EuScript C}}}$ the set of all expressions of the $\mu$-syntax induced by $P_{{\underline{\EuScript C}}}$, and we shall use $\mu$\texttt{Term}$_{{\underline{\EuScript C}}}$ and $\mu$\texttt{Comm}$_{{\underline{\EuScript C}}}$ to denote the subsets of  terms and commands of $\mu$\texttt{Exp}$_{{\underline{\EuScript C}}}$,  respectively. As in the case of the combinator syntax, the set of expressions (resp. terms and commands) of   type $X$ will be denoted by $\mu$\texttt{Exp}$_{{\underline{\EuScript C}}}(X)$ (resp. $\mu$\texttt{Term}$_{{\underline{\EuScript C}}}(X)$ and $\mu$\texttt{Comm}$_{{\underline{\EuScript C}}}(X)$).
\subsection{$\mu$-syntax as a rewriting system} \label{mu-rewriting}
Let  $\leadsto$ be the rewriting relation defined on $\mu$\texttt{Exp}$_{{\underline{\EuScript C}}}$ as (the reflexive and transitive closure of) the union of  rewriting rules 
$$ \langle s\,|\, t\rangle \leadsto \langle t\,|\, s\rangle\quad  \mbox{ and }\quad   \langle\mu x.c\,|\, s\rangle\leadsto c[s/x]   
$$
obtained by orienting from left to right the  equations {\small\texttt{(MU1)}} and {\small\texttt{(MU2)}}, respectively, which is, moreover, congruent with respect to {\small\texttt{(MU3)}}, {\small\texttt{(MU4)}} and substitution\footnote{ Since the precautionary renaming which ensures that the substitution $c[s/x]$ is done in the capture-free manner is {\em non-deterministinc}, the rewriting relation $\leadsto$ is formally defined on the equivalence classes of expressions of the $\mu$-syntax with respect to {\small\texttt{(MU3)}} and {\small\texttt{(MU4)}},   just as   the usual rewriting systems in $\lambda$-calculus are actually defined on $\alpha$-conversion classes.}.  \\
\indent The non-confluence  of the rewriting system $(\mu$\texttt{Exp}$_{{\underline{\EuScript C}}},\rightsquigarrow)$ shows up immediately: for the  reductions $$c_2[\mu x.c_1/y]\leftsquigarrow\langle\mu x.c_1\,|\,\mu y.c_2\rangle\rightsquigarrow c_1[\mu y.c_2/x]$$  arising due to {\small\texttt{(MU1)}} (which makes the whole reduction system symmetric), we do not have a way to exhibit a command $c$, such that $c_2[\mu x.c_1/y]\leadsto c$ and $c_1[\mu y.c_2/x]\leadsto c$. Nevertheless, all three commands above describe the same unrooted tree. \\
\indent However, modulo the trivial commuting conversion,  this rewriting system is terminating:  the number of $\mu$-binders in an expression is strictly decreasing at each    reduction step of the form $\langle\mu x.c\,|\, s\rangle\rightsquigarrow c[s/x]$ (which makes it impossible to have an infinite sequence of reductions of this kind).  It is straightforward to prove that the set 
$\mu$\texttt{Exp}$_{{\underline{\EuScript C}}}^{{\it nf}}=\mu$\texttt{Comm}$_{{\underline{\EuScript C}}}^{{\it nf}}\cup\mu$\texttt{Term}$_{{\underline{\EuScript C}}}^{{\it nf}}$ of normal forms
is generated by the  following rules:  
\begin{center}\mybox{\begin{tabular}{c}  
$\displaystyle\frac{}{x\in \mu\mbox{\texttt{Term}}_{{\underline{\EuScript C}}}^{{\it nf}}}$ \enspace \enspace\enspace $\displaystyle\frac{a\in{\underline{\EuScript{C}}}(X)\enspace \enspace\enspace  t_x\in \mu\mbox{\texttt{Term}}_{{\underline{\EuScript C}}}^{{\it nf}} \mbox{ \small for all $x\in X$}}{{\underline{a}}\{t_x\,|\,x\in X\}\in \mu\mbox{\texttt{Comm}}_{{\underline{\EuScript C}}}^{{\it nf}}}$\enspace \enspace\enspace 
$\displaystyle\frac{c\in \mu\mbox{\texttt{Comm}}_{{\underline{\EuScript C}}}^{{\it nf}}}{\mu x.c\in \mu\mbox{\texttt{Term}}_{{\underline{\EuScript C}}}^{{\it nf}}}$ 
\end{tabular}}\end{center}

 In the next example, we examine the shape of normal forms in relation with unrooted trees. 
\begin{example} Let ${\EuScript T}$ be the unrooted tree from \textsc{Example} \ref{ex4}. Here is the list of commands in normal form that describe ${\EuScript T}$:  $$\underline{a}\{x_1,x_2,\mu y_1.\underline{b}\{y_1,y_2,y_3,y_4\},\mu z_1.\underline{c}\{z_1,z_2\}\} ,\vspace{-0.1cm}$$
 $$\underline{b}\{\mu x_3.\underline{a}\{x_1,x_2,x_3,\mu z_1.\underline{c}\{z_1,z_2\}\},y_2,y_3,y_4\},\quad\underline{b}\{\mu x_3.\underline{c}\{\mu x_4.\underline{a}\{x_1,x_2,x_3,x_4\},z_2\},y_2,y_3,y_4\} ,$$ $$\underline{c}\{\mu x_4.\underline{a}\{\mu y_1.\underline{b}\{y_1,y_2,y_3,y_4\},x_2,x_3,x_4\},z_2\},\quad\underline{c}\{\mu x_4.\underline{b}\{\mu x_3.\underline{a}\{x_1,x_2,x_3,x_4\},y_2,y_3,y_4\},z_2 \} .$$  Each of the   commands records  the free variables and corollas of ${\EuScript T}$:  free  variables are the variables not bound with $\mu$ ($x_1$, $x_2$, $y_2$, $y_3$, $y_4$ and $z_2$), and the corollas correspond to the underlined parameters ($a$, $b$ and $c$). The variables involved in edges of ${\EuScript T}$ ($x_3$, $y_1$, $x_4$ and $z_1$) can also be recovered from the list, as the variables  bound with $\mu$.  For example, in the first command we see  that $y_1$ and $z_1$ are explicitly bound by $\mu$, while for $x_3$ and $x_4$ we could say that  they are implicitly bound, given that they are   replaced with a non-variable term.
\end{example}
\indent In general, the set  $\mu${\texttt{Comm}}$_{{\underline{\EuScript C}}}^{{\it nf}}$     describes decompositions of unrooted trees of the following kind:  pick a corolla $a$  of a tree, and then proceed recursively so in all the connected components of the graph resulting from the removal of $a$ . (We provide in \S \ref{pru} an algorithmic computation of these  connected components). \\
\indent Amusingly, one can show that, if  {\small\texttt{(MU1)}} gets oriented in the other way around,  the  normal forms of the resulting rewriting system will be in one-to-one correspondence with the combinators of Section \ref{s1}, and thus describe decompositions of unrooted trees of the following kind: pick an edge $e$ of the tree, and then proceed  recursively so in  the two connected components of the graph resulting from the removal of $e$.\\
\indent These two extremes substantiate our informal explanation of the $\mu$-syntax as a mix of partial composition and simultaneous composition styles.

\subsection{The interpretation of the $\mu$-syntax in an arbitrary  cyclic operad }\label{muint}
We next consider the semantic aspect of the $\mu$-syntax relative to unrooted trees that we intuitively brought up in \S \ref{lang} and \S \ref{mu-rewriting}, by  defining an interpretation function of the  $\mu$-syntax into a cyclic operad, as defined in Definition \ref{d2}. We ascribe meaning to the  $\mu$-syntax by first translating it to the combinator syntax from Section \ref{s1}.\\[0.1cm]
\indent   The translation function \vspace{-0.1cm}\begin{center}$[[\rule{.4em}{.4pt}]]:$ $\mu$\texttt{Exp}$_{{\underline{\EuScript C}}}\rightarrow$ \texttt{cTerm}$_{{\underline{\EuScript C}}}$ \end{center}\vspace{-0.1cm} is defined recursively as follows, wherein the assignment of a combinator to a term $t\in\mu$\texttt{Term}$_{{\underline{\EuScript C}}}$ is  indexed by a variable that is fresh relative to $t$:
\begin{itemize}
\item $[[x]]_{y}={\it id}_{x,y}$,\\[-0.5cm]
\item if, for each $x\in X$,   $[[t_x]]_{\overline{x}}$ is a translation of the term $t_x$, then \vspace{-0.1cm}$$[[\underline{a}\{t_x\,|\, x\in X\}]]=a(\varphi) ,\vspace{-0.1cm} $$where  $a(\varphi)$ denotes the combinator corresponding to the simultaneous composition  determined by $a\in{\EuScript{C}}(X)$ and $\varphi : x\mapsto ([[t_x]]_{\overline{x}},\overline{x})$ (see \eqref{simultaneous}),\\[-0.5cm]
\item $[[\mu x.c]]_{y}=[[c[y/x]\,]]$, and\\[-0.5cm]
\item $[[\langle s\,|\, t\rangle]]=[[s]]_{x}\,{_{x}\circ_{y}} \,[[t]]_{y}$ .
\end{itemize}

\indent In order to show that $[[\rule{.4em}{.4pt}]]$ is well-defined, we introduce the following notational conventions. For a command $c:X$ (resp.  term $X\,|\,t$) and a bijection $\sigma:X'\rightarrow X$, we define$$c^{\sigma}:=c[\dots , \sigma^{-1}(x)/x ,\dots]\enspace (\mbox{resp.}\enspace t^{\sigma}:=t[\dots , \sigma^{-1}(x)/x ,\dots])$$ as a simultaneous substitution (renaming) of the variables from the set $X$ (guided by $\sigma$). One of the basic properties of the introduced substitution is  the equality $(\mu a.c)^{\sigma}=\mu a.c^{\sigma_a}$ (for the definition of $\sigma_a$, see the paragraph Notation and conventions in the Introduction).   \\[0.1cm]
\indent The way  $c^{\sigma}$ is defined  indicates that its translation should be the combinator  $[[c]]^{\sigma}:X'$. The following lemma ensures that this is exactly the case. In its statement,  $[[\rule{.4em}{.4pt}]]_{X}$   denotes the restriction of  $[[\rule{.4em}{.4pt}]]$ on   $\mu$\texttt{Exp}$_{{\underline{\EuScript C}}}(X)$.  
\begin{lem}\label{co} For a  bijection $\sigma:X'\rightarrow X$,   $t\in\mu{\tt{Term}}_{{\underline{\EuScript C}}}(X)$ and $c\in\mu{\tt{Comm}}_{{\underline{\EuScript C}}}(X)$, the following two equalities hold: $$[[t^{\sigma}]]_y=[[t]]_y^{{\sigma}_y} \quad\quad \mbox{and} \quad\quad [[c^{\sigma}]]=[[c]]^{\sigma}.$$
\end{lem}
To verify that the definition of $[[\rule{.4em}{.4pt}]]$ is valid, we shall also need the following result.
\begin{lem}[Substitution lemma]\label{sl} Let $X\cap Y=\emptyset$, $t\in \mu${\em\texttt{Term}}$_{{\underline{\EuScript C}}}(Y)$ and $x\in X$. Then, for $s\in\mu{\tt{Term}}_{{\underline{\EuScript C}}}(X)$ and $c\in\mu{\tt{Comm}}_{{\underline{\EuScript C}}}(X)$, the following two equalities hold: $$[[s[t/x]]]_u=[[s]]_u\,{_{x}\circ_{v}} \,[[t]]_{v}\quad\quad \mbox{and}\quad\quad [[c[t/x]]]=[[c]]\,{_{x}\circ_{v}} \,[[t]]_{v}.$$ 
\end{lem}
\begin{proof}
\indent If $t$ is a variable, say $y$, then,  by {\tt{(U1)}} and {\tt{(EQ)}}, we get\vspace{-0.1cm} $$[[s[y/x]]]_u=[[s^{{\it id}_X^{y/x}}]]_u={[[s]]_u}^{{\it id}_X^{y/x}}=[[s]]_u\,{_{x}\circ_{v}}\,{\it id}_{v,y}=[[s]]_u\,{_{x}\circ_{v}}\,[[y]]_v\, ,\vspace{-0.1cm}$$ and, analogously,\vspace{-0.1cm} $$[[c[y/x]]]=[[c^{{\it id}_X^{y/x}}]]=[[c]]^{{\it id}_X^{y/x}}=[[c]]\,{_{x}\circ_{z}}\, {\it id}_{z,y}= [[c]]\,{_{x}\circ_{z}}\,[[y]]_z .\vspace{-0.1cm}$$
\indent If $t= \mu y.c_1$, we proceed by induction on the structure of $s$, i.e. $c$.
\begin{itemize}
\item If $s=x$, then, again by {\tt{(U1)}} and {\tt{(EQ)}}, we get
{\small $$\begin{array}{rclclclcl}
[[x[\mu y.c_1/x]]]_{u}&=&[[\mu y.c_1]]_{u}&=&[[c_1[u/y]]]&=&[[c_1^{id_Y^{u/y}}]]\\[0.2cm]
&=&[[c_1]]^{id_Y^{u/y}}&=&[[c_1]]\,{_{y}\circ_{x}}\,{\it id}_{x,u}&=&[[c_1]]\,{_{y}\circ_{x}}\,[[x]]_{u}&=&[[c_1[u/y]]]\,{_{u}\circ_{x}}\,[[x]]_{u} .
\end{array} \vspace{-0.1cm}$$}
\item Next, assume that $c:X\cup\{z\}$ satisfies the equality and let $s=\mu z.c$. Denote $U=X\backslash\{x\}\cup\{z\}\cup Y$. By by {\tt{(U1)}} and {\tt{(EQ)}}, we have\vspace{-0.1cm}
{\small $$\begin{array}{rclclcl}
[[\mu z.c[\mu y.c_1/x]]]_{u}&=&[[\mu z.(c[\mu y.c_1/x])]]_{u}&=&[[c[\mu y.c_1/x][u/z]]] &=&[[c[\mu y.c_1/x]^{{\it id}_U^{u/z}}]]\\[0.2cm]
 &=&[[c[\mu y.c_1/x]]]^{{\it id}_U^{u/z}} &=&([[c]]\,{_{x}\circ_{y}}\,[[c_1]])^{{\it id}_U^{u/z}}&=& [[c]]^{{\it id}_U^{u/z}} \,{_{x}\circ_{y}}\,[[c_1]]\\[0.2cm]
&=& [[c[u/z]]]\,{_{x}\circ_{v}}\,[[c_1[v/y]]]&=& [[\mu v.c]]_{u}\,{_{x}\circ_{v}}\,[[c_1[v/y]]] .
\end{array}\vspace{-0.1cm}$$}
\item Let $X=X_1\cup X_2$ and suppose that $c = \langle t_1\,|\, t_2\rangle$, where $X_1\,|\, t_1$ and $X_2\,|\, t_2$ satisfy the claim. Without loss of generality, we can assume  that $x\in X_2$. By {\tt{(A1)}}, we   have

\vspace{-0.4cm}
{\small $$\begin{array}{rclcl}
[[\langle t_1\,|\, t_2\rangle [\mu y.c_1/x]]] &=&[[\langle t_1\,|\, t_2[\mu y.c_1/x]\rangle]]&=& [[t_1]]_u\,{_{u}\circ_{v}}\,[[t_2[\mu y.c_1/x]]]_v\\[0.2cm]
&=& [[t_1]]_u\,{_{u}\circ_{v}}\, ([[t_2]]_v \,{_{x}\circ_{w}}\,[[\mu y.c_1]]_w)&=& ([[t_1]]_u\,{_{u}\circ_{u}}\, [[t_2]]_u) \,{_{x}\circ_{w}}\,[[\mu y.c_1]]_w\\[0.2cm]
&=& [[\langle t_1\,|\, t_2\rangle ]] \,{_{x}\circ_{v}}\, [[\mu y.c_1]]_v .
\end{array}\vspace{-0.1cm}$$ }
\item Finally, let $X=\bigcup_{z\in Z}Y_z$ and suppose that  $c=\underline{a}\{t_z\,|\, z\in Z\}$, where for all $z\in Z$, $Y_z\,|\, t_z$ satisfy the claim. Suppose, moreover, that for $u\in Z$, $x\in Y_u$. Then, on one hand, we have
$$
[[\underline{a}\{t_z\,|\, z\in Z\} [\mu y.c_1/x]]]=[[\underline{a}\{\{t_z\,|\, z\in Z\backslash\{u\}\}\cup \{t_u[\mu y.c_1/x]\}\}]]= a(\varphi) ,
$$
where $\varphi: z\mapsto ([[t_z]]_{\overline{z}},\overline{z})$, for all $z\in Z\backslash\{u\}$, and $\varphi: u\mapsto ([[t_u[\mu y.c_1/x]]]_{\overline{u}},\overline{u})$. On the other hand, $$[[\underline{a}\{t_z\,|\, z\in Z\}]]\,{_{x}\circ_{v}}\, [[\mu y.c_1]]_v= a(\psi_1)\,{_{x}\circ_{v}}\, [[\mu y.c_1]]_v , $$
where $\psi_1:z\mapsto ([[t_z]]_{\overline{z}},\overline{z})$, for all $z\in Z$. By Lemma \ref{geneq}, $$a(\psi_1)\,{_{x}\circ_{v}}\, [[\mu y.c_1]]_v=a(\psi_2) ,$$ where $\psi_2=\psi_1$ on $Z\backslash\{a\}$, and $\psi_2:u\mapsto ([[t_u]]_{\overline{u}}\,{_{x}\circ_{v}}\,[[\mu y.c_1]]_v ,\overline{u})$. Hence, we need to prove that \vspace{-0.1cm}$$[[t_u[\mu y.c_1/x]]]_{\overline{u}}=[[t_u]]_{\overline{u}}\,{_{x}\circ_{v}}\,[[\mu y.c_1]]_v ,\vspace{-0.1cm}$$ 
but this equality is exactly the induction hypothesis for the term $t_u$.  
\vspace{-0.45cm}
\end{itemize}
\end{proof}
Let $=_{\mu}$ (resp. $=$) be the smallest equivalence relation on $\mu${\texttt{Exp}}$_{{\underline{\EuScript C}}}$ (resp. {\texttt{cTerm}}$_{{\underline{\EuScript C}}}$) generated by the equations of $\mu$-syntax (resp. by the equations of Definition \ref{entriesonly}). 
\begin{thm}\label{wd}
The translation function {\em $[[\rule{.4em}{.4pt}]]:$ $\mu$\texttt{Exp}$_{{\underline{\EuScript C}}}\rightarrow$ \texttt{cTerm}$_{{\underline{\EuScript C}}}$} is well-defined, i.e., it induces a map from $\mu${\em\texttt{Exp}}$_{{\underline{\EuScript C}}}/_{=_{\mu}}$ to {\em\texttt{cTerm}}$_{{\underline{\EuScript C}}}/_{=}$. Moreover, the induced map is a bijection.
\end{thm}
\begin{proof}
The equation {\small\texttt{(MU1)}} is valid in the world of combinators, as it gets translated to ${\small\texttt{(CO)}}$. As for {\small\texttt{(MU2)}}, for a command $c:X$, by Lemma \ref{sl}, we get:
{\small  $$ [[\langle\mu x.c\,|\,t\rangle]]=[[\mu x.c]]_u\,{_{u}\circ_{v}} \,[[t]]_v
=[[c[u/x]]] \,{_{u}\circ_{v}} \, [[t]]_v
=[[c]]^{{\it id}_X^{u/x}} \,{_{u}\circ_{v}} \, [[t]]_v
=[[c]]\,{_{x}\circ_{v}} \, [[t]]_v
= [[c[t/x]]].$$ }
For {\small\texttt{(MU3)}} and {\small\texttt{(MU4)}}, we have\vspace{-0.1cm}
 $$[[\mu x.c]]_{u}=[[c[u/x]]]=[[c[y/x][u/y]]]=[[\mu y.c[y/x]]]_u\vspace{-0.1cm}$$ and
 $$[[\underline{a}^{\sigma}\{t_{\sigma(y)}\,|\, y\in Y\}]]=a^{\sigma}(\varphi')=a^{\sigma}(\varphi\circ\sigma)=a(\varphi)=[[\underline{a}\{t_x\,|\, x\in X\}]], $$ where $\varphi':y\mapsto([[t_{\sigma(y)}]]_{\overline{\sigma(y)}},\overline{\sigma(y)})$ and $\varphi:\sigma(y) \mapsto([[t_{\sigma(y)}]]_{\overline{\sigma(y)}},\overline{\sigma(y)})$, respectively.  The inverse translation   is obtained  via  correspondence $(-) {_{x}\circ_{y}} (-)\enspace\mapsto \enspace \langle\mu x.\, \rule{.5em}{.4pt}\, |\, \mu y.\,\rule{.5em}{.4pt}\, \rangle .$
\end{proof}
We  define the interpretation of the $\mu$-syntax in  an arbitrary  cyclic operad  ${\EuScript C}$, as the composition $$[\,[[\rule{.4em}{.4pt}]]\,]_{{\EuScript C}}: \mu {\tt{Exp}}_{{\underline{\EuScript C}}}\rightarrow{{\EuScript C}},$$ where the interpretation $[\rule{.4em}{.4pt}]_{{\EuScript C}}: {\tt{cTerm}}_{{\underline{\EuScript C}}}\rightarrow {{\EuScript C}}$ arises as explained after Definition \ref{d2}.

\subsection{$\mu$-syntax does the job!}
The theorem below  puts the $\mu$-syntax in line with already established frameworks for defining a cyclic operad. 
\begin{thm}\label{equiv}
The quotient set of  the commands of the $\mu$-syntax relative to the relation $=_{\mu}$, is in one-to-one correspondence with the one of unrooted trees relative to the $\alpha$-conversion. In other words, for every finite set $X$, there exists a bijection \begin{center}$\Phi_X:\mu${\em\texttt{Comm}}$_{{\underline{\EuScript C}}}(X)_{/_{=_{\mu}}}\rightarrow \enspace${\em\texttt{T}}$_{{\underline{\EuScript C}}}(X)$.\end{center} 
\end{thm}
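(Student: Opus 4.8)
The plan is to construct maps in both directions between $\mu\texttt{Comm}_{\EuScript C}(X)/_{=_\mu}$ and $\texttt{VT}_{\EuScript C}(X)/_\alpha$ and show they are mutually inverse. In the direction from commands to trees, I would first define a map on $\mu$-normal forms by recursion following the generating rules for $\mu\texttt{Exp}_{\EuScript C}^{\it nf}$: send a normal-form command $\underline{f}\{t_x\mid x\in X\}$ to the Vernon tree obtained by taking the corolla $f(\dots)$ and simultaneously grafting onto each entry $x$ the Vernon tree associated to the normal-form term $t_x$, where a term $\mu y.c$ is sent to the tree of $c$ with $y$ as the ``selected'' free variable, and a term that is a bare variable $u$ just contributes a renaming. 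Concretely this is the simultaneous grafting $[{\EuScript T}]_\alpha(\varphi)$ with ${\EuScript T}=\{f(x,y,z,\dots);{\it id}\}$ and $\varphi$ built from the subterms, exactly as in the informal reading of the syntax given in Section 3.1. Since every $\mu$-expression has a normal form (the rewriting system is terminating modulo the commuting conversion, as noted in 3.2), and since $=_\mu$ identifies an expression with its normal forms, this recursion on normal forms extends to a well-defined map on $\mu\texttt{Comm}_{\EuScript C}(X)/_{=_\mu}$ — but only after I check that all $\mu$-normal forms of a given command yield $\alpha$-equivalent Vernon trees. This is the content of the confluence-failure discussion: the critical pairs coming from $\textsc{(MU1')}$ describe genuinely different decompositions of the \emph{same} tree, so the claim is plausible, and I expect it follows from the fact that both grafting orders compute the same underlying Vernon graph (same corollas, same involution), using Lemma~\ref{geneq} and the axioms (CO), (A1), (A2), (EQ) to absorb the reorderings; alternatively, one routes through the translation $[[\rule{.4em}{.4pt}]]$ of Theorem~\ref{wd} into combinators and the model $\texttt{VT}_{\EuScript C}$ of Lemma~\ref{vernon_model}.

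In the direction from trees to commands, I would give an algorithmic decomposition: given a representative ${\EuScript T}=\{f_1(\dots),\dots,f_k(\dots);\sigma\}$, pick a corolla, say $f_1$, remove it, and recurse on each connected component of the resulting forest — this is precisely the decomposition the paper describes at the end of 3.2, and an ``algorithmic computation of these connected components'' is promised for 4.1, so I would lean on that. The output is a normal-form command; different choices of which corolla to pick first, and in which component to recurse, produce the various normal forms listed in the example, all $=_\mu$-equal. I would prove well-definedness of the induced map $\texttt{VT}_{\EuScript C}(X)/_\alpha \to \mu\texttt{Comm}_{\EuScript C}(X)/_{=_\mu}$ by (i) checking invariance under $\alpha$-conversion of Vernon trees (immediate, since $\textsc{(MU3)}$, $\textsc{(MU4)}$ are themselves $\alpha$-conversions on the syntax side), and (ii) checking that two decomposition choices differ by a sequence of $\textsc{(MU1')}$-reversals and $\textsc{(MU2')}$-style re-associations, hence are $=_\mu$-equal; concretely, swapping the order in which two adjacent corollas are ``peeled off'' corresponds exactly to the critical pair $c_2[\mu x.c_1/y]\leftarrow\langle\mu x.c_1\mid\mu y.c_2\rangle\rightarrow c_1[\mu y.c_2/x]$.

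Finally I would show the two maps are mutually inverse. Going tree $\to$ command $\to$ tree: a decomposition records exactly the corollas, the free variables (the unbound ones), and the edges (the bound-variable pairings), so rebuilding by grafting returns the same Vernon graph up to $\alpha$ — this is a structural induction on the number of corollas. Going command $\to$ tree $\to$ command: start from a normal-form command, read off its tree, then decompose it by always peeling the head corolla $f$ first and recursing on the subtrees attached to its entries in the order the subterms appear; this literally reconstructs the original normal form, up to the $\alpha$-equalities $\textsc{(MU3)}$/$\textsc{(MU4)}$. The ``categorical isomorphism'' phrasing just means these bijections are natural in $X$ (compatible with the $\texttt{VT}_{\EuScript C}(\kappa)$ action and the $(\rule{.4em}{.4pt})^\sigma$ action), which is Lemma~\ref{co} on one side and the equivariance of grafting built into $\texttt{VT}_{\EuScript C}$ on the other.

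I expect the \textbf{main obstacle} to be the well-definedness of the command $\to$ tree map: proving that all $\mu$-normal forms of a single command (which can look quite different, as the five-term example shows) represent one and the same $\alpha$-class of Vernon trees. The cleanest route is probably not a direct rewriting argument but factoring through the combinator interpretation $[[\rule{.4em}{.4pt}]]$ and the already-established cyclic operad structure on $\texttt{VT}_{\EuScript C}$ (Lemma~\ref{vernon_model}), so that the coherence is inherited from the axioms (A1), (A2), (CO), (EQ) of Definition~\ref{entriesonly} rather than re-proved by hand; the needed bookkeeping of fresh variables and restricted involutions is the part most likely to be tedious.
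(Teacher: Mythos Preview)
Your overall plan matches the paper's: the command-to-tree map is exactly the paper's $\Phi = [\,\cdot\,]_{\texttt{VT}_{\EuScript C}} \circ [[\,\cdot\,]]$, and the tree-to-command map is built from the pruning decomposition of 4.1.1. Where you diverge from the paper is in locating the difficulty. You flag well-definedness of the command-to-tree map as the main obstacle, but this is the easy direction: once you factor through $[[\,\cdot\,]]$ and interpret in the cyclic operad $\texttt{VT}_{\EuScript C}$, Theorem~\ref{wd} guarantees that $[[\,\cdot\,]]$ respects $=_\mu$, and Lemma~\ref{vernon_model} gives a genuine model, so the composite is automatically well-defined on $=_\mu$-classes. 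Your ``cleanest route'' is exactly what the paper does, and it requires no further argument.

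The real obstacle is your step (ii), the well-definedness of the tree-to-command map (equivalently, injectivity of $\Phi$). You correctly identify the local move---swapping which of two adjacent corollas is peeled first is a critical-pair instance, hence a $=_\mu$-equality---but you do not argue that any two \emph{global} decomposition orders are connected by a chain of such adjacent swaps. This is not automatic: the choice of first corolla fixes the subtree partition, so two decompositions rooted at different corollas are not related by a single swap. The paper closes this gap by giving the swap move a name, introducing a new equality $='$ on normal forms (if $\sigma(x)=\mu y.c$ then $\underline{f}\{\sigma\} =' c[\mu x.\underline{f}\{\sigma[x/x]\}/y]$), showing $='$ is derivable from $=_\mu$, and then proving that any two normal forms $c_1,c_2$ with $\Phi(c_1)=\Phi(c_2)$ are $='$-connected by an induction on the \emph{distance} between their head symbols along the unique path joining the corresponding corollas in the tree (together with a separate structural induction, using Lemma~\ref{uf}, for the case where the head symbols already agree). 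Your plan becomes complete once you supply this, or an equivalent, connectedness argument.
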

The proof of Theorem \ref{equiv}  goes through a new equality $='$ on $\mu${\texttt{Comm}}$^{\it nf}_{\underline{\EuScript C}}(X)$, as well as suitably tailored decompositions of unrooted trees, necessary for establishing the injectivity of $\Phi_X$.   We first describe these decompositions and the  equality $='$ and  then prove the theorem.
\subsubsection{``Pruning" of unrooted trees}\label{pru}
We  describe an algorithm that takes an ordinary unrooted tree ${\EuScript T}$,   a corolla $a\in {\it Cor}({\EuScript T})$ and 
a variable $v\in FV(a)\backslash FV({\EuScript T})$, and returns a proper subtree ${\EuScript T}_v$ of ${\EuScript T}$, the subtree ``plucked" from $a$ at the junction of $v$ and $\sigma(v)$, where $\sigma$ is the involution of ${\EuScript T}$.  In the sequel, for an arbitrary corolla $b\in {\it Cor}({\EuScript T})$ and $w\in FV(b)\backslash FV({\EuScript T})$, $S_w(b)$ will denote the corolla adjacent to $b$ along the edge $(w,\sigma(w))$, if such a corolla exists. \\
\indent We first specify how to generate the set ${\it Cor}({\EuScript T}_v)^{+}$  of pairs of a corolla of  ${\EuScript T}_v$  and one of its free variables,  by the following formal rules:\begin{center} \mybox{\begin{tabular}{ccc}
$\seq{}{(S_v(a),\sigma(v))\in {\it Cor}({\EuScript T}_{v})^{+}}$ &\enspace &
$\seq{(b,u)\in {\it Cor}({\EuScript T}_{v})^{+} \quad w\in FV(b)\backslash (FV({\EuScript T})\cup\{u\})}
{(S_w(b),\sigma(x))\in {\it Cor}({\EuScript T}_v)^{+}}$\end{tabular}}\end{center}
This formal system has the following properties.
\begin{rem}\label{pairs}  
  Each  element $(S_w(b),\sigma(w))\in {\it Cor}({\EuScript T}_v)^{+}$  is such that $S_w(b)$ is adjacent to $b$ in ${\EuScript T}$. For each $(b,u)\in {\it Cor}({\EuScript T}_v)^{+}$, we have  $b\neq a$.
\end{rem}
We obtain the set of corollas of ${\EuScript T}_v$ by erasing from the elements of  ${\it Cor}({\EuScript T}_v)^{+}$ the data about the distinguished free variables, i.e. we define\vspace{-0.1cm} $${\it Cor}({\EuScript T}_v)=\{b\,|\,(b,u)\in {\it Cor}({\EuScript T}_v)^{+} \mbox{ for some } u\in FV(b)\}.\vspace{-0.1cm}$$ The involution $\sigma_{{\EuScript T}_v}$ of ${\EuScript T}_v$ is defined as \vspace{-0.1cm}$${\sigma}_{{\EuScript T}_v}(z)=\left\{ 
	\begin{array}{ll}
		\sigma(z) & \mbox{if }  z\in  \bigl(\bigcup_{b\in Cor({\EuScript T}_v)}FV(b)\bigr)\backslash \sigma(v)\\[0.1cm]
		z & \mbox{if }  z=\sigma(v)\,.\\[0.1cm]
	\end{array}
\right.\vspace{-0.1cm}$$
\indent We shall denote the algorithm with ${\EuScript P}$, and the result ${\EuScript P}({\EuScript T}, a,v)$ of instatiating ${\EuScript P}$ on a tree ${\EuScript T}$, a corolla $a\in{\it Cor}({\EuScript T})$, and a variable $v\in FV(a)\backslash FV({\EuScript T})$ will often be denoted  as ${\EuScript T}_v$, as we have just done above. The following claim guarantees that ${\EuScript P}$ is correct.
\begin{lem}\label{ppp} For   un unrooted tree ${\EuScript T}$,   $a\in {\it Cor}({\EuScript T})$ and   $v\in FV(a)\backslash FV({\EuScript T})$,
${\EuScript T}_v$ is a proper subtree of ${\EuScript T}$.
\end{lem}
\begin{proof} 
By the construction, we have that ${\it Cor}({\EuScript T}_v)\subseteq{\it Cor}({\EuScript T})$ and that ${\EuScript T}_v$ is connected. By Remark \ref{pairs}, it follows that ${\it Cor}({\EuScript T}_v)$ is a proper subset of ${\it Cor}({\EuScript T})$. Finally, since $\sigma_{{\EuScript T}_v}=\sigma$ on $V({\EuScript T}_v)\backslash FV({\EuScript T}_v)$,  we can  conclude that ${\EuScript T}_v$ is  indeed a subtree of ${\EuScript T}$.
\end{proof}
\begin{cor}\label{decomposition}
For  an unrooted tree ${\cal T}$ and  $a\in{\it Cor}({\cal T})$,    the set of unrooted trees ${\EuScript P}({\cal T},a)$, defined by $${\EuScript P}({\cal T},a)=\{\{a(y_1,\dots,y_n);{\it id}_{Y}\}\}\cup \{{\cal T}_v\,|\, v\in Y\backslash FV({\cal T})\},$$ where $Y=\{y_1,\dots,y_n\}$, is a decomposition of ${\cal T}$.
\end{cor}
\begin{proof} The proof goes by induction on the cardinality  of $Y\backslash FV({\cal T})$. 
\end{proof}
\begin{lem}\label{decomp} L Let ${\cal T}$ be an unrooted tree and let $a\in{\it Cor}({\cal T})$. Suppose that $FV({\cal T})=X$ and  ${\it FV}(a)=Y$, where $Y=\{y_1,\dots,y_n\}$. Let $I=\{i_1,\dots i_k\}=\{i\in\{1,\dots,n\}\,|\, y_i\in FV(a)\backslash X\}$.
Then, if ${\EuScript P}({\cal T},a)=\{\{a(y_1,\dots,y_n);{\it id}_{Y}\}\}\cup \{{\cal T}_{y_i}\,|\, i\in I\}$, we have that $$[{\cal T}]_{\alpha}=(([\{a(y_1,\dots,y_n);{\it id}_Y\}]_{\alpha}\,{_{y_{i_1}}\!\!\bullet_{\sigma(y_{i_1})}} \,[{\cal T}_{y_{i_1}}]_{\alpha})\cdots) \,{_{y_{i_k}}\!\!\bullet_{\sigma(y_{i_k})}} \,[{\cal T}_{y_{i_k}}]_{\alpha}\,.$$
\end{lem}
\begin{proof} By induction on the size of ${\EuScript T}$. The claim holds trivially if $a$ is the only corolla of ${\EuScript T}$.  \\
\indent Suppose that ${\EuScript T}$ has $k$ corollas, $k\geq 2$, and that the claim holds for all proper subtrees of ${\EuScript T}$ that contain the corolla $a$.  Since there exists at least one corolla other than $a$ in ${\EuScript T}$,  there exists $1\leq j\leq n$ such that $y_j\in FV(a)\backslash X$. Let ${\EuScript T}'$ be the unrooted tree whose set of corollas is $${\it Cor}({\EuScript T}')=\{a(y_1,\dots,y_n)\}\cup \{{\it Cor}({\EuScript T}_{y_i})\,|\, i\in I\backslash\{j\}\}$$ and whose involution $\sigma'$ is defined as $${\sigma}'(y)=\left\{ 
	\begin{array}{ll}
		\sigma(y) & \mbox{if }  y\in  FV(a)\backslash \{y_j\}\cup\bigcup_{b\in{\it Cor}({\EuScript T}')\backslash \{a\}}FV(b)  \\[0.1cm]
		y & \mbox{if }  y=x_j \,.\\[0.1cm]
	\end{array}
\right.$$
Clearly, ${\EuScript T}'$ is a proper subtree of ${\EuScript T}$, and, by the induction hypothesis, we have $$[{\cal T}']_{\alpha}=(([\{a(y_1,\dots,y_n);{\it id}_Y\}]_{\alpha}\,{_{y_{i_1}}\bullet_{\sigma(y_{i_1})}} \,[{\cal T}_{y_{i_1}}]_{\alpha})\cdots) \,{_{y_{i_k}}\bullet_{\sigma(y_{i_k})}} \,[{\cal T}_{y_{i_k}}]_{\alpha},$$ where $i_1,\dots,i_k\in I\backslash\{j\}$. The claim holds since $[{\EuScript T}]_{\alpha}=[{\EuScript T}']_{\alpha}\,{_{y_{j}}\bullet_{\sigma(y_{j})}} \, [{\EuScript T}_{y_j}]_{\alpha}$.
\end{proof}
\begin{lem}\label{kuku}
 If an unrooted tree ${\EuScript T}$ has at least two corollas, then there exists $c\in{\it Cor}({\EuScript T})$, such that $ FV(c)\backslash FV({\EuScript T})$ is a singleton.
\end{lem}
\begin{proof}
Suppose that $FV({\EuScript T})=X$ and let $\sigma$ be the involution of ${\EuScript T}$. We proceed by induction on the number $n$ of corollas of ${\EuScript T}$.
 For the base case, suppose that  ${\it Cor}({\EuScript T})=\{a,b\}$. Then  there exist  $x\in FV(a)$ and $y\in FV(b)$  such that $\sigma(x)=y$, while all other variables of ${\EuScript T}$ are fixpoints of $\sigma$. Hence,   $FV(a)\backslash FV({\EuScript T})=\{x\}$ and $FV(b)\backslash FV({\EuScript T})=\{y\}$, i.e. $a$ and $b$ both satisfy the claim.\\
\indent Assume now that ${\EuScript T}$ has $n$ corollas, where $n> 2$. Let $a\in{\it Cor}({\EuScript T})$, ${\it FV}(a)=Y$, be such that there exists $v\in Y\backslash X$. If $v$ is the unique such variable we are done. If not, let $\{C;{\it id}_Y\}\cup \{{\EuScript T}_u\,|\, u\in Y\backslash X\}$ be the decomposition of ${\EuScript T}$ obtained by applying   ${\EuScript P}$ on $a$.   Then, if ${\it Cor}({\EuScript T}_v)=\{S_v(a)\}$,  by the definition of ${\EuScript P}$, we know that   $FV(S_v(a))\backslash X=\{\sigma(v)\}$. Therefore, since ${\it Cor}({\EuScript T}_v)\subseteq {\it Cor}({\EuScript T})$, $S_v(a)$ is a corolla that satisfies the claim. On the other hand, if ${\EuScript T}_v$ contains more than one corolla, by the induction hypothesis on ${\EuScript T}_v$, we get  $b\in {\it Cor}({\EuScript T}_v)$ such that $FV(b)\backslash FV({\EuScript T}_v)=\{w\}$. Since $FV(b)\backslash X\subseteq FV(b)\backslash FV({\EuScript T}_v)$, we know that either $FV(b)\backslash X=\{w\}$, or $FV(b)\backslash X=\emptyset$. The latter is impossible  because   $b$ would be the only corolla of ${\EuScript T}$.
\end{proof}
Let ${\EuScript T}$ and $c$ be as in the previous lemma, and let $FV(c)\backslash FV({\EuScript T})=\{v\}$. We shall denote with ${\EuScript T}_{/c}$ the unrooted tree such that ${\it Cor}({\EuScript T}_{/c})={\it Cor}({\EuScript T})\backslash\{c\}$ and whose involution $\sigma_{/c}$ agrees with the involution  $\sigma$ of ${\EuScript T}$ everywhere, except on $\sigma(v)$, which is a fixpoint of $\sigma_{/c}$. Lemma \ref{kuku} guarantees that ${\EuScript T}_{/c}$ is well-defined.\\[0.1cm]
\indent We now establish a   non-inductive characterisation of the output of the algorithm ${\EuScript P}$. 
\begin{lem}\label{alg2}
Let ${\EuScript T}$ be an unrooted tree with involution $\sigma$ and let $a\in {\it Cor}({\EuScript T})$ and $v\in \linebreak FV(a)\backslash FV({\EuScript T})$. The following properties are equivalent for a subtree ${\EuScript T}'$ of ${\EuScript T}$:\\[-0.6cm]
\begin{enumerate}
\item ${\EuScript T}'={\EuScript P}({\EuScript T},a,v)$,\\[-0.65cm]
\item $\sigma(v)\in FV({\EuScript T}')$ and $FV({\EuScript T}')\backslash \{\sigma(v)\}\subseteq FV({\EuScript T})$.
\end{enumerate}
\end{lem}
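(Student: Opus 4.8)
The plan is to prove the two implications $(1)\Rightarrow(2)$ and $(2)\Rightarrow(1)$ separately, relying on the inductive definition of the algorithm $\mathcal P$ via the set ${\it Cor}(\mathcal T_v)^+$ and on Remark \ref{pairs}. For $(1)\Rightarrow(2)$, suppose $\mathcal T'=\mathcal P(\mathcal T,C,v)$. The base rule puts $(S_v(C),\sigma(v))$ into ${\it Cor}(\mathcal T_v)^+$, so $S_v(C)\in{\it Cor}(\mathcal T_v)$; since $\sigma(v)\in FV(S_v(C))$ and, by definition of $\sigma_{\mathcal T_v}$, $\sigma(v)$ is a fixpoint of $\sigma_{\mathcal T_v}$, we get $\sigma(v)\in FV(\mathcal T')$. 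For the inclusion $FV(\mathcal T')\backslash\{\sigma(v)\}\subseteq X$, I would argue that every half-edge of $\mathcal T_v$ that is \emph{not} a fixpoint of $\sigma$ in the original tree $\mathcal T$ is in fact matched \emph{within} $\mathcal T_v$ — this is exactly the content of the inductive rule, which, whenever it adds a corolla $D$, immediately records (via later applications) the corollas attached along every non-$X$, non-entry-point half-edge of $D$. Formally: if $x\in FV(\mathcal T')$ and $x\neq\sigma(v)$ and $x\notin X$, then $x$ is not a fixpoint of $\sigma$ (since it lies in $V(\mathcal T)\backslash FV(\mathcal T)$), so $\sigma(x)$ is a genuine half-edge; I claim $\sigma(x)\in V(\mathcal T_v)$, which contradicts $x\in FV(\mathcal T')$. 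The claim follows by tracking, for the corolla $D$ with $x\in FV(D)$, how $D$ entered ${\it Cor}(\mathcal T_v)^+$ together with its distinguished variable $u$: either $x=u$, in which case $D$ was added as $S_{x'}(D')$ along the edge $(x,\sigma(x))$ so $D'\in{\it Cor}(\mathcal T_v)$ and $\sigma(x)\in FV(D')$; or $x\neq u$, in which case the second rule applies to $(D,u)$ with this very $x$, adding $S_x(D)$, so $\sigma(x)\in FV(S_x(D))\subseteq V(\mathcal T_v)$. Either way $\sigma(x)\in V(\mathcal T_v)$, as needed.

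For $(2)\Rightarrow(1)$, I would use the facts already available: $\mathcal P(\mathcal T,C,v)$ is a proper subtree of $\mathcal T$ (Lemma, via Remark \ref{pairs}), and — crucially — Corollary \ref{decomposition}, which says $\{C;{\it id}\}\cup\{\mathcal T_w\mid w\in FV(C)\backslash X\}$ partitions the corollas of $\mathcal T$ into disjoint subtrees. Now let $\mathcal T'$ satisfy (2). The edge $(v,\sigma(v))$ of $\mathcal T$ connects $C$ to the corolla $S_v(C)$; since $\sigma(v)\in FV(\mathcal T')$ and $\mathcal T'$ is a \emph{connected subgraph of $\mathcal T$}, and $\sigma(v)$ is not a fixpoint of $\sigma$, the half-edge $\sigma(v)$ of $\mathcal T'$ must be a leg of $\mathcal T'$ (it is in $FV(\mathcal T')$), so $\mathcal T'$ does not contain the matching half-edge $v$ of $C$; hence $C\notin{\it Cor}(\mathcal T')$ and $\mathcal T'$ lies entirely ``on the $S_v(C)$ side'' of that edge. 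Since the subtrees in Corollary \ref{decomposition} exhaust $\mathcal T$ and are pairwise disjoint, and $S_v(C)\in{\it Cor}(\mathcal T_v)$, every corolla of $\mathcal T'$ reachable from $S_v(C)$ without recrossing $(v,\sigma(v))$ lies in $\mathcal T_v$; conversely, using $FV(\mathcal T')\backslash\{\sigma(v)\}\subseteq X$, one shows $\mathcal T'$ contains \emph{all} of $\mathcal T_v$: if some corolla $D\in{\it Cor}(\mathcal T_v)$ were missing from $\mathcal T'$, then, picking $D$ adjacent to ${\it Cor}(\mathcal T')$ along an edge $(x,\sigma(x))$ with $x\in FV(\mathcal T')$, we would have $x\notin X$ and $x\neq\sigma(v)$ (the latter because $\sigma(v)$ matches into $C\notin{\it Cor}(\mathcal T_v)\cup{\it Cor}(\mathcal T')$... more carefully, because $x$ matches a corolla of $\mathcal T_v$ while $\sigma(v)$ matches $C$), contradicting (2). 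So ${\it Cor}(\mathcal T')={\it Cor}(\mathcal T_v)$; that the involutions agree follows since both are the restriction of $\sigma$ on $V(\mathcal T_v)\backslash FV(\mathcal T_v)$ with $\sigma(v)$ turned into a fixpoint, the latter forced by $\sigma(v)\in FV(\mathcal T')$. Hence $\mathcal T'=\mathcal P(\mathcal T,C,v)$.

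I expect the main obstacle to be the ``fullness'' half of $(2)\Rightarrow(1)$ — showing $\mathcal T'$ cannot be a \emph{proper} subtree of $\mathcal T_v$. The clean way is to phrase it as: a connected subgraph $\mathcal T'$ of the tree $\mathcal T$ is completely determined by its leg-set together with the requirement of connectedness and containing a given corolla, because in a tree, removing a subtree's legs disconnects it from the rest. So once we know $\mathcal T'$ is connected, contains $S_v(C)$, has $\sigma(v)$ as a leg, and has all other legs in $X$ (hence legs of $\mathcal T$ itself, by the disjointness in Corollary \ref{decomposition} which forces $FV(\mathcal T_v)\backslash\{\sigma(v)\}\subseteq FV(\mathcal T)=X$ as well, so $\mathcal T_v$ has exactly the same property (2)), the two subtrees $\mathcal T'$ and $\mathcal T_v$ have the same legs and both contain $S_v(C)$, and a short tree argument (any vertex not separated from $S_v(C)$ by a leg must be included) gives ${\it Cor}(\mathcal T')={\it Cor}(\mathcal T_v)$. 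I would present this tree-combinatorial lemma once and invoke it; the rest is bookkeeping with $\sigma$ and the definition of $\sigma_{\mathcal T_v}$.
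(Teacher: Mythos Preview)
Your argument is correct, but the route for $(2)\Rightarrow(1)$ is genuinely different from the paper's. The paper proceeds by induction on $|{\it Cor}(\mathcal T')|$: in the inductive step it invokes Lemma~13 to find a ``leaf'' corolla $D$ of $\mathcal T'$ (one with $FV(D)\setminus FV(\mathcal T')$ a singleton), passes to $\mathcal T'_{/D}$ inside the smaller ambient tree $\mathcal T_{/D}$, applies the induction hypothesis to get $\mathcal T'_{/D}=\mathcal P(\mathcal T_{/D},C,v)$, and then checks that one further application of the second rule of $\mathcal P$ recovers $D$. You instead exploit Corollary~\ref{decomposition} directly: the partition $\{C\}\cup\{\mathcal T_w\mid w\in FV(C)\setminus X\}$ of $\mathcal T$, together with connectivity of $\mathcal T'$ and $C\notin{\it Cor}(\mathcal T')$, forces ${\it Cor}(\mathcal T')\subseteq{\it Cor}(\mathcal T_v)$, and then your boundary argument (a missing adjacent corolla would produce a leg of $\mathcal T'$ outside $X\cup\{\sigma(v)\}$) gives the reverse inclusion. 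Your approach is more global and avoids Lemma~13 entirely; the paper's inductive peeling stays closer to the inductive definition of $\mathcal P$.

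Two minor remarks. First, in your third paragraph the phrase ``$\mathcal T'$ and $\mathcal T_v$ have the same legs'' is not what condition~(2) gives (it only bounds the legs by $X\cup\{\sigma(v)\}$); what you really use, and what suffices, is that $\sigma(v)$ is the \emph{only} leg of either subtree that is not already a leg of $\mathcal T$ --- your second paragraph already carries this correctly, so this is only a wording issue. Second, for $(1)\Rightarrow(2)$ the paper just says ``clear'', and indeed a one-liner works: for $x\in FV(\mathcal T_v)$ with $x\neq\sigma(v)$ the definition of $\sigma_{\mathcal T_v}$ gives $x=\sigma_{\mathcal T_v}(x)=\sigma(x)$, hence $x\in FV(\mathcal T)=X$. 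Your more elaborate derivation of $\sigma(x)\in V(\mathcal T_v)$ from the inductive rules is correct but redundant, as that containment is part of $\mathcal T_v$ being a well-defined subtree, already established in the preceding lemma.
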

\begin{proof}
That (1) implies (2) is clear.\\
\indent We prove that $(2)$ implies $(1)$ by induction on the number $n$ of corollas of ${\EuScript T}'$. If $n=1$, then, since $\sigma(v)\in FV({\EuScript T}')$, $S_{v}(a)$ is the only corolla of ${\EuScript T}'$ and the conclusion follows since, by the assumption, $FV({\EuScript T}')\backslash\{\sigma(v)\}=FV(S_{v}(a))\backslash\{\sigma(v)\}\subseteq X$, i.e. $FV(S_{v}(a))\backslash\{X\cup\{\sigma(v)\}\}=\emptyset$. 
\indent Suppose that   $n\geq 2$, and let, by Lemma \ref{kuku}, $c\in {\it Cor}({\EuScript T}')$ be such that $FV(c)\backslash FV({\EuScript T}')=\{u\}$.  If $c=S_{v}(a)$, then it follows easily that ${\EuScript T}'={\EuScript T}_v$. If not, by applying the induction hypothesis on ${\EuScript T}'_{/c}$, we get that  ${\EuScript T}'_{/c}={\EuScript P}({\EuScript T}_{/c},a,v)$. Observe that $(S_u(c),w)\in{\it Cor}({\EuScript T}'_{/c})^{+}$, for some $w\in FV(S_u(c))$  different from $\sigma(u)$. By instantiating ${\EuScript P}$ on $(S_u(c),w)$ and $\sigma(u)$, we get the pair $(c,u)$, and the claim follows since $FV(c)\backslash (FV({\EuScript T})\cup\{u\})=\emptyset$ (i.e. the algorithm stops) and since ${\EuScript T}'_{/c}$ and the single-corolla unrooted tree determined by $c$ make a decomposition of ${\EuScript T}'$.
\end{proof}
\indent For the following two lemmas, recall the definition of the simultaneous composition \eqref{simultaneous} for entries-only cyclic operads. We shall instantiate it on the cyclic operad of classes of unrooted trees, described in \S \ref{free-forgetiful}.   
\begin{lem}\label{jbt}
Let\label{uf} $a\in\underline{\EuScript C}(X)$, where $X=\{x_1,\dots,x_n\}$, and let, for all $x_i\in X$, $\gamma:x_i\mapsto([{\cal T}_{x_i}]_{\alpha},\overline{x_i})$ be an assignment for which the simultaneous composition $[\{a(x_1,\dots,x_n);{\it id}_X\}]_{\alpha}(\gamma)$ is well-defined. Then the following properties hold.
\begin{itemize}
\item[a)] The $\alpha$-equivalence class $[\{a(x_1,\dots,x_n);{\it id}_X\}]_{\alpha}(\gamma)$ admits a representative ${\EuScript T}$, such that $a\in {\it Cor}({\EuScript T})$.
\item[b)] If ${\EuScript T}$ is a representative of $[\{a(x_1,\dots,x_n);{\it id}_X\}]_{\alpha}(\gamma)$, such that $a\in{\it Cor}({\EuScript T})$, and if $\sigma$ is the involution of ${\EuScript T}$, then each class $[{\EuScript T}_{x_i}]_{\alpha}$ admits the unrooted tree ${\EuScript P}({\EuScript T},a,x_i)^{\rho_i}$, where $\rho_i$ renames $\sigma(x_i)$ to $\overline{x_i}$, as a representative. 
\end{itemize}
\end{lem}
\begin{proof} Observe that there are two stages of renaming involved in forming the simultaneous composition $[\{a(x_1,\dots,x_n);{\it id}_X\}]_{\alpha}(\gamma)$. By   \eqref{simultaneous}, we first rename the free variables of the corolla $a$, obtaining in this way the composition $$(\cdots([\{a^{\sigma}(x'_1,\dots,x'_n);{\it id}_{X'}\}]_{\alpha} \, {_{{x'}\!_1}}\! \!\bullet_{\,{\overline {x_1}}}\, [{\EuScript T}_{x_1}]_{\alpha})\cdots) \, {_{{x'}\!_n}}\! \!\bullet_{\,{\overline {x_n}}}\, [{\EuScript T}_{x_n}]_{\alpha},  $$ where $X'=\{x'_1,\dots, x'_n\}$ and $\sigma:X'\rightarrow X$ is derfined by $\sigma(x'_i)=x_i$,  which is then ``calculated'' by the definition of $_x\bullet_y$ from \S \ref{free-forgetiful}. This calculation involves the renaming of variables of all the trees from the above composition, in such a way that the resulting trees have mutually disjoint sets of variables, i.e. it goes though the simultaneous composition
$$(\cdots([\{a^{\sigma\circ\tau}(y_1,\dots,y_n);{\it id}_{Y}\}]_{\alpha} \, {_{{y'}\!_1}}\! \!\bullet_{\,{\overline {y_1}}}\, [{\EuScript T}^{\tau_1\cup {\it id}_{FV({\EuScript T}_{x_i})\backslash\{\overline{x_i}\}}}_{x_1}]_{\alpha})\cdots) \, {_{{y'}\!_n}}\! \!\bullet_{\,{\overline {y_n}}}\, [{\EuScript T}^{\tau_n\cup{\it id}_{FV({\EuScript T}_{x_n})\backslash\{\overline{x_n}\}}}_{x_n}]_{\alpha},  $$ 
where $Y=\{y_1,\dots,y_n\}$, $\tau:Y\rightarrow X'$ is defined by $\tau(y_i)=x'_i$ and each $\tau_i: V_i\rightarrow (V({{\EuScript T}_{x_i}})\backslash {\it FV}({\EuScript T}_{x_i}))\linebreak\cup \{\overline{x_i}\}$ is such that $\tau_i(\overline{y_i})=\overline{x_i}$. The resulting class now has as a representative the tree ${\EuScript T}'$, such that $${\it Cor}({\EuScript T}')=\{a^{\sigma\circ\tau}(y_1,\dots,y_n)\}\cup \bigcup_{1\leq i\leq n} {\it Cor}({\EuScript T}^{\tau_i}_{x_i})$$ and whose involution $\sigma'$ is defined in the obvious way.\\[0.1cm]
\indent The first claim holds, since, thanks to the equivariance axiom {\tt{(EQ)}} for $_x\bullet_y$, we can turn  ${\EuScript T}'$  into an unrooted tree ${\EuScript T}$ that has $a$ as a corolla, by ``undoing'' the renaming $\sigma\circ\tau$. Clearly, if some variable $x_i$ appears in ${\EuScript T}'$,   but   did not originally come from the corolla $a$, this variable has to be renamed too, in order to ensure that all the variables of ${\EuScript T}$ are distinct. Therefore, $${\it Cor}({\cal T})=\{a(x_1,\dots,x_n)\}\cup \bigcup_{1\leq i\leq n} {\it Cor}(({\EuScript T}^{\tau_i}_{x_i})^{\kappa_i\cup {\it id}_{FV({\EuScript T}_{x_i})\backslash\{\overline{x_i}\}}}),$$ where  $\kappa_i : U_i\cup \{\overline{z_i}\} \rightarrow V_i\cup\{\overline{y_i}\}$ is such that $\kappa_i(\overline{z_i})=\overline{y_i}$ and the distinctness requirement for the variables of ${\EuScript T}$ is satisfied. The involution $\sigma$ of ${\cal T}$ is defined from $\sigma'$ in the obvious way. \\[0.1cm]
\indent For the second claim, fix an $i\in\{1,\dots,n\}$. Observe that  we have that $$({\EuScript T}_{x_i}^{\tau_i})^{\nu_i}_{x_i}=_{\alpha}{\EuScript T}_{x_i},$$ where $\nu_i$ renames $\overline{y_i}$ to $\overline{x_i}$. Also, we have that $${\EuScript T}^{\tau_i}_{x_i}=_{\alpha} (({\EuScript T}^{\tau_i}_{x_i})^{\kappa_i\cup {\it id}_{FV({\EuScript T}_{x_i})\backslash\{\overline{x_i}\}}})^{\pi_i},$$ where $\pi_i$ renames $\overline{z_i}$ to $\overline{y_i}$. Therefore, $$((({\EuScript T}^{\tau_i}_{x_i})^{\kappa_i\cup {\it id}_{FV({\EuScript T}_{x_i})\backslash\{\overline{x_i}\}}})^{\pi_i})^{\nu_i}=_{\alpha}{\EuScript T}_{x_i},$$ i.e. each class $[{\EuScript T}_{x_i}]_{\alpha}$ admits as a representative $(({\EuScript T}^{\tau_i}_{x_i})^{\kappa_i\cup {\it id}_{FV({\EuScript T}_{x_i})\backslash\{\overline{x_i}\}}})^{\rho_i}$, where $\rho_i$ renames $\overline{z_i}=\sigma(x_i)$ to $\overline{x_i}$. Observe that $({\EuScript T}^{\tau_i}_{x_i})^{\kappa_i\cup {\it id}_{FV({\EuScript T}_{x_i})\backslash\{\overline{x_i}\}}}$ is a subtree of ${\cal T}$. That we indeed have that $$({\EuScript T}^{\tau_i}_{x_i})^{\kappa_i\cup {\it id}_{FV({\EuScript T}_{x_i})\backslash\{\overline{x_i}\}}}={\EuScript P}({\EuScript T},a,x_i)$$ is clear by  considering the non-inductive criterion from Lemma  \ref{alg2}. 
\end{proof}
\begin{lem}
Let\label{uf} $a\in\underline{\EuScript C}(X)$, where $X=\{x_1,\dots,x_n\}$, and let, for all $x_i\in X$, $\gamma:x_i\mapsto([{\cal T}_{x_i}]_{\alpha},\overline{x_i})$ and $\tau:x_i\mapsto ([{\cal T}'_{x_i}]_{\alpha},\tilde{x_i})$ be assignments for which the simultaneous compositions  $$[\{a(x_1,\dots,x_n);{\it id}_X\}]_{\alpha}(\gamma) \quad \mbox{ and }\quad [\{a(x_1,\dots,x_n);{\it id}_X\}]_{\alpha}(\tau)$$ are well-defined. Then, if $[\{a(x_1,\dots,x_n);{\it id}_X\}]_{\alpha}(\gamma)=[\{a(x_1,\dots,x_n);{\it id}_X\}]_{\alpha}(\tau)$, we have that $[{\cal T}_{x_i}]^{\kappa}_{\alpha}=[{\cal T}'_{x_i}]_{\alpha}$ for all $x_i\in X,$ where $\kappa$ renames $\overline{x_i}$ to ${\tilde{x_i}}$.
\end{lem}
\begin{proof}
By Lemma \ref{jbt}(a), for  $$[\{a(x_1,\dots,x_n);{\it id}_X\}]_{\alpha}(\gamma)=[\{a(x_1,\dots,x_n);{\it id}_X\}]_{\alpha}(\tau)=[{\cal T}]_{\alpha},$$ we can assume that the representative ${\cal T}$ is such that it has $a\in {\it Cor}({\EuScript T})$.  Let $\sigma$ be the involution of ${\EuScript T}$. By applying twice Lemma \ref{jbt}(b), we get that $$[{\EuScript T}_{x_i}]^{\kappa}_{\alpha}=[{\EuScript P}({\EuScript T},a,x_i)^{\rho_i}]^{\kappa}_{\alpha}=[{\cal T}'_{x_i}]_{\alpha},$$ 
where $\rho_i$ renames $\sigma(x_i)$ to $\overline{x_i}$,  which proves the claim.  
\end{proof}
\subsubsection{The equivalence relation $='$  on $\mu\tt{Comm}^{\it nf}_{{\underline{\EuScript C}}}$}
 Let $a\in{\underline{\EuScript C}}(X)$ and let  $\sigma:x\mapsto t_x$ be an association of terms to variables from  $X$, such that the command $\underline{a}\{\sigma\}$ is well-typed. The equivalence relation $='$ is the smallest equivalence relation  generated by  equalities\vspace{-0.1cm} \begin{center}\mybox{ 
 $\underline{a}\{\sigma\}='c[\mu x.\underline{a}\{\sigma[x/x]\}/y] $ }\end{center} where $\sigma(x)=\mu y.c$ and $\sigma[x/x]$ denotes the same association as $\sigma$, except for $x$, to which it associates $x$ itself. We, moreover, assume that $='$ is congruent  with respect to {\small{\tt(MU3)}},  {\small{\tt(MU4)}} and substitution.
\begin{rem}
Observe that, if  $\underline{a}\{\sigma\}='c[\mu x.\underline{a}\{\sigma[x/x]\}/y]$, and if $\underline{a}\{\sigma\}$  is a normal form, then this is also true for the command  $c[\mu x.\underline{a}\{\sigma[x/x]\}/y]$. Therefore, $='$ is well-defined on $\mu\tt{Comm}^{\it nf}_{{\underline{\EuScript C}}}$.
\end{rem}
\indent The intuition behind these equalities  is again about equating commands that reflect two ways to build the same unrooted tree. 
\begin{example}
Consider the  unrooted tree ${\EuScript T}=\{a(x_1,x_2,x_3,x_4), b(y_1,y_2,y_3,y_4,y_5);\sigma\}$, where $\sigma=(x_1\, y_2)$, represented pictorially as\vspace{-0.1cm}
 \begin{center}
\begin{tikzpicture}[level distance=10mm,
  every node/.style={minimum size=5mm, inner sep=0.5mm},
  level 1/.style={every child/.style={edge from parent/.style={draw,solid}},nodes={fill=red!45},sibling distance=7mm},
  level 2/.style={every child/.style={edge from parent/.style={draw,solid}},nodes={fill=red!30}},
  level 3/.style={every child/.style={edge from parent/.style={draw,solid}},nodes={fill=red!30}, sibling distance=4mm},
  level 4/.style={every child/.style={edge from parent/.style={draw,dashed}},sibling distance=3mm},
  normal/.style={circle,draw,solid},
  jov/.style={circle,draw=none, fill=white}
  acc/.style={circle,thick,solid,draw=green!50,fill=green!2},
  rej/.style={circle,thick,solid,draw=red!50,fill=red!20},
  semithick
   empt/.style={circle,draw=none,edge from parent/.style={draw,solid} }]

\node (f) [fill=none,draw=black,minimum size=4mm,inner sep=0.1mm,circle, normal]  at (-1,0) {$a$};
\node (g) [circle, normal, minimum size=4mm,fill=none,inner sep=0.1mm]  at (1,1) {$b$};
\node (2) [label={[xshift=-0.02cm, yshift=-0.28cm]\scriptsize{$x_2$}},circle, normal, minimum size=1mm,fill=none,draw=none]  at (-1.38,0.85) {};
\node (4) [label={[xshift=-0.07cm, yshift=-0.34
cm]\scriptsize{$x_4$}},circle, normal, minimum size=1mm,fill=none,draw=none]  at (-1.8,-0.4) {};
\node (6) [label={[xshift=0.02cm, yshift=-0.39cm]\scriptsize{$x_3$}},circle, normal, minimum size=1mm,fill=none,draw=none]  at (-0.62,-0.83) {};
\node (a) [label={[xshift=0cm, yshift=-0.42cm]{\scriptsize{$y_1$}}},circle, normal, minimum size=1mm,fill=none,draw=none,inner sep=1mm]  at (0.32,1.75) {};
\node (b) [label={[xshift=0.04cm, yshift=-0.28cm]\scriptsize{$y_3$}},circle, normal, minimum size=1mm,fill=none,draw=none]  at (1.5,1.75) {};
\node (c) [label={[xshift=0.1cm, yshift=-0.33cm]\scriptsize{$y_4$}},circle, normal, minimum size=1mm,fill=none,draw=none]  at (1.83,0.8) {};
\node (d) [label={[xshift=0cm, yshift=-0.4
cm]\scriptsize{$y_5$}},circle, normal, minimum size=1mm,fill=none,draw=none]  at (1,0.15) {};
\node (x) [label={[xshift=-0.1cm, yshift=-0.6cm]\scriptsize{$x_1$}},circle, normal, minimum size=1mm,fill=none,draw=none]  at (0,0.5) {};
\node (y) [label={[xshift=0.25cm, yshift=-0.44cm]\scriptsize{$y_2$}},circle, normal, minimum size=1mm,fill=none,draw=none]  at (0,0.5) {};
\draw (-0.06,0.6)--(0.04,0.4);
\draw (f)--(g);
\draw (f)--(2) ;\draw (f)--(4);\draw (f)--(6);
\draw (g)--(a);\draw (g)--(b);\draw (g)--(c);\draw (g)--(d);

\end{tikzpicture}
\end{center}\vspace{-0.3cm}
The commands equated by $='$   reflect  the two possible ways to build ${\EuScript T}$  by means of simultaneous grafting: we could pick either the corolla $a$ and graft to it the surrounding trees, or we can do the same by choosing first the corolla $b$. In the language of the $\mu$-syntax, the two constructions are  described by the left hand side and the right hand side of the equality \vspace{-0.1cm}$$\underline{a}\{\mu y_2.\underline{b}\{y_1,y_2,y_3,y_4,y_5\},a,b,c\}='\underline{b}\{y_1,\mu x_1.\underline{a}\{x_1,x_2,x_3,x_4\},y_3,y_4,y_5\},\vspace{-0.1cm}$$  respectively. Observe that, from the tree-wise perspective, $='$ enables us to ``move between two adjacent corollas'', i.e. it enables us to ``move along a path in a tree''. As we shall see, this feature will be crucial for in the proof of injectivity of Theorem \ref{equiv}. \hfill$\square$
\end{example}

\indent The proof of the following lemma shows that $='$ is   a ``macro" derivable from $=_{\mu}$.
\begin{lem}\label{bla} For any $c_1,c_2\in$ $\mu${\em\texttt{Comm}}$_{{\underline{\EuScript C}}}^{\it nf}$, if $c_1='c_2$, then $c_1=_{\mu}c_2$.
\end{lem}
\begin{proof}  If $\underline{a}\{\sigma\}='c[\mu x.\underline{a}\{\sigma[x/x]\}/y]$, then $\sigma(x)=\mu y.c$, which justifies the following sequence of  equalities: \vspace{-0.1cm}$$\underline{a}\{\sigma\}=_{\mu}\langle \mu x.\underline{a}\{\sigma[x/x]\} \,|\, \mu y.c\rangle=_{\mu}\langle  \mu y.c \,|\, \mu x.\underline{a}\{\sigma[x/x]\}\rangle=_{\mu} c[\mu x.\underline{a}\{\sigma[x/x]\}/y] .\vspace{-0.65cm}$$
\end{proof}

The equality $='$ (denoted differently) appears in the work \cite{lam} of Lamarche, where it is called Adjunction and used in the context of the so-called {\em reversible terms}. Although the Adjunction rule materialises the same intuition about unrooted trees, there, unlike in our work, it is not derived from a more primitive notion of equality.
\subsubsection{The proof of Theorem \ref{equiv}}
The correspondence $\Phi_X:\mu${\texttt{Comm}}$_{{\underline{\EuScript C}}}(X)_{/_{=_{\mu}}}\rightarrow  {\tt{T}}_{{\underline{\EuScript C}}}(X)$ is canonically induced from the correspondence
 $$\underline{\Phi}:\mu \tt{Exp}_{{\underline{\EuScript C}}}\rightarrow \tt{T}_{{\underline{\EuScript C}}},$$ defined as the composition of the translation function $[[\rule{.4em}{.4pt}]]:$ $\mu$\texttt{Exp}$_{{\underline{\EuScript C}}}\rightarrow$ \texttt{cTerm}$_{{\underline{\EuScript C}}}$ (see \S \ref{muint}) with the interpretation function $[\rule{.4em}{.4pt}]_{\mbox{\texttt{T}}_{{\underline{\EuScript C}}}} : $ \texttt{cTerm}$_{{\underline{\EuScript C}}}\rightarrow\tt{T}_{{\underline{\EuScript C}}}$ (that arises by considering the free cyclic operad defined in \S \ref{free-forgetiful} through Definition \ref{d2}). We  show explicitly the definition of  $\underline{\Phi}$ below, wherein the assignment of an $\alpha$-equivalence class of unrooted trees to a term $t\in \mu\tt{Term}_{{\underline{\EuScript C}}}$ will be indexed by a fresh variable $y$  involved in the corresponding interpretation $[[t]]_y$:\\[-0.5cm]
\begin{itemize}
\item $\underline{\Phi}_y(x)=[\{(x,y);{\it id}_{\{x,y\}}\}]_{\alpha}$,\\[-0.55cm]
\item if, for each $x_i\in \{x_1,\dots,x_n\}$, $\underline{\Phi}\,_{\overline{x}_i}(t_{x_i})=[{\cal T}_{x_i}]_{\alpha}$, then $$\underline{\Phi}(\underline{a}\{t_{x_1},\dots, t_{x_n} \})=[\{a(x_1,\dots,x_n);{\it id}_X\}]_{\alpha}(\varphi), $$
where  $\varphi : x_i\mapsto ([{\cal T}_{x_i}]_{\alpha}, \overline{x}_i)$ (see \eqref{simultaneous}),\\[-0.55cm]
\item  $\underline{\Phi}_y(\mu x.c)=(\Phi(c))^{\kappa}, $ where $\kappa$ renames $x$ to $y$, and\\[-0.55cm]
\item if $\underline{\Phi}_x(s)=[{\EuScript T}_s]_{\alpha}$ and $\underline{\Phi}_y(t)=[{\EuScript T}_t]_{\alpha}$, then $\underline{\Phi}(\langle s \,| \,t\rangle)=[{\EuScript T}_s]_{\alpha}\,{_{x}\bullet_{y}} \,[{\EuScript T}_t]_{\alpha}.$
\end{itemize} 

By Theorem \ref{wd},    $\underline{\Phi}$ is well-defined. We prove that it is both injective and surjective.\\[0.1cm]
\indent \textit{Surjectivity.} Suppose given an $\alpha$-equivalence class $[{\EuScript T}]_{\alpha}\in$ {\texttt{T}}$_{{\underline{\EuScript C}}}(X)$.  If ${\EuScript T}=\{(x,y);{\it id}_{\{x,y\}}\}$,  then it is easily seen that $\underline{\Phi}(\langle x\,|\, y\rangle)=[\{(x,y);{\it id}_{\{x,y\}}\}]_{\alpha} $.
 
\indent Suppose now that ${\EuScript T}$ is an ordinary unrooted tree. We proceed by induction on the number $k$ of corollas of $\EuScript T$. Let  $a\in{\it Cor}({\EuScript T})$ be such that ${\it FV}(a)=Y$, where $Y=\{y_1,\dots,y_n\}$.\\
\indent If $a$ is the only corolla of ${\EuScript T}$, then $\underline{\Phi}(\underline{a}\{y_1,\dots ,y_n\})=[\{a(y_1,\dots,y_n);{\it id}_Y\}]_{\alpha}.$ \\
\indent Suppose  that $a$ is not the only corolla of ${\EuScript T}$, i.e. that  $k\geq 2$, and let $\sigma$ be the involution of ${\EuScript T}$. 
Let $I=\{i\in\{1,\dots,n\}\,|\, y_i\in FV(a)\backslash X\}$ and $J=\{1,\dots,n\}\backslash I$. By the induction hypothesis for each ${\EuScript P}({\EuScript T},a,y_i)={\EuScript T}_{x_i}$ (recall from \S \ref{pru} that ${\EuScript P}$ is the ``pruning'' algorithm), for $i\in I$, we get a set\vspace{-0.1cm} $$\{c_{i}\in \mu{\tt{Comm}}_{{\underline{\EuScript C}}}\,|\, i\in I\mbox{ and }\underline{\Phi}(c_{i})=[{\EuScript T}_{y_i}]_{\alpha}\}.\vspace{-0.1cm}$$  We now set for all $i\in I$, $t_{y_i}=\mu \sigma(y_i).c_i$, and for all $j\in J$, $t_{y_j}=y_j$, and we claim that $\Phi(a\{t_y\,|\, y\in Y\})=[{\EuScript T}]_{\alpha}$. We have $\underline{\Phi}(a\{t_{y_k}\,|\, k\in\{1,\dots,n\}\})=a(\varphi)$, where 
$$\varphi : y_k \mapsto \left\{ 
	\begin{array}{ll}
		([{\EuScript T}_{y_i}]_{\alpha}^{\kappa_i},z_i) & \mbox{if }  k=i \mbox{ for some } i\in I\\[0.1cm]
		([\{(y_j,\underline{y_j});id_{\{y_j,\underline{y_j}\}}\}]_{\alpha},\underline{x_j}) & \mbox{if }  k=j \mbox{ for some } j\in J\\[0.1cm]
	\end{array}
\right.$$
 with $[{\EuScript T}_{y_i}]_{\alpha}^{\kappa_i}=\underline{\Phi}_{z_i}(\mu\sigma(y_i).c_i)$ being the class associated to the term $\mu\sigma(y_i).c_i$ with respect to the interpretation under the fresh variable $z_i$. 
 Therefore, if $I=\{i_1,\dots,i_{m_I}\}$ and $J=\{j_1,\dots,j_{m_J}\}$,  by the axiom {\small{\texttt{(U1)}}}, $\underline{\Phi}(a\{t_{y_k}\,|\, k\in\{1,\dots,n\}\})$ is equal to   $$(\cdots([\{a(y_1,\dots,y_n);id_Y\}]_{\alpha}^{\kappa_{j_1}\kappa_{j_2}\cdots\kappa_{j_{m_J}}}\,{_{y_{i_1}}\bullet_{z_{i_1}}} \,[{\EuScript T}_{y_{i_1}}]^{\kappa_{{i_1}}}_{\alpha})\cdots )\,{_{y_{i_{m_I}}}\bullet_{z_{i_{m_I}}}} \,[{\EuScript T}_{y_{i_{m_I}}}]^{\kappa_{{i_{m_I}}}}_{\alpha}$$ 
where each $\kappa_{j_m}$, $1\leq m\leq {m_J}$ is the renaming of $y_{j_k}$ to $y_{j_k}$, i.e. the identity on $Y$, and each $\kappa_{i_m}$, $1\leq m\leq m_{I}$, is the renaming of $z_{i_k}$ to $\sigma(x_{i_k})$.
Finally, by {\small{\texttt{(EQ)}}}, we have  $$\underline{\Phi}(a\{t_{y_k}\,|\, k\in\{1,\dots,n\}\})=(([\{a(y_1,\dots,y_n);{\it id}_Y\}]_{\alpha}\,{_{y_{i_1}}\bullet_{\sigma(y_{i_1})}} \,[{\EuScript T}_{y_{i_1}}]_{\alpha})\cdots) \,{_{y_{i_{m_I}}}\bullet_{\sigma(y_{i_{m_I}})}} \,[{\EuScript T}_{y_{i_{m_I}}}]_{\alpha},\vspace{-0.1cm}$$ and, consequently, by Lemma \ref{decomp}, that $\underline{\Phi}(a\{t_{y_k}\,|\, k\in\{1,\dots,n\}\})=[{\EuScript T}]_{\alpha}$.\\[0.1cm]
\indent \textit{Injectivity.} Notice that, in order to establish the injectivity of $\underline{\Phi}$, it suffices to prove it for commands $c_1,c_2\in \mu$\texttt{Comm}$_{{\underline{\EuScript C}}}^{\it nf}$.   By Lemma \ref{bla}, the injectivity for normal forms follows if we show that,  if $\underline{\Phi}(c_1)=\underline{\Phi}(c_2)$, then $c_1=' c_2$. \\
\indent If $c_1$ and $c_2$ have the same head symbol, we proceed by induction on the structure of $c_1$ and $c_2$.  Suppose that $c_1=\underline{a}\{s_x|x\in X\}=\underline{a}\{\sigma\}$ and $c_2=\underline{a}\{t_x|x\in X\}=\underline{a}\{\sigma'\}$. The assumption $\underline{\Phi}(c_1)=\underline{\Phi}(c_2)$ means that  $$[\{a(x_1,\dots,x_n);{\it id}_X\}]_{\alpha}(\varphi)=[\{a(x_1,\dots,x_n);{\it id}_X\}]_{\alpha}(\psi),$$ where $\varphi:x\mapsto(\underline{\Phi}_{\tilde{x}}(s_x),\tilde{x})$ and $\psi:x\mapsto(\underline{\Phi}_{\overline{x}}(t_x),\overline{x})$, and consequently, by Lemma \ref{uf},  that for all $x\in X$, $\underline{\Phi}_{\tilde{x}}(s_x)^{\kappa}=\underline{\Phi}_{\overline{x}}(t_x)$, where $\kappa$ renames $\tilde{x}$ to $\overline{x}$. The claim holds by the reflexivity of $='$ if all $s_x$ and $t_x$ are variables: if $s_x=u$ and $t_x=v$, then\vspace{-0.1cm} $$[\{(u,\overline{x});{\it id}_{\{u,\overline{x}\}}\}]_{\alpha}=(\underline{\Phi}_{\tilde{x}}(u))^{\kappa}=\underline{\Phi}_{\overline{x}}(v)=[\{(v,\overline{x});{\it id}_{\{v,\overline{x}\}}\}]_{\alpha},\vspace{-0.1cm}$$ and, therefore, it must be the case that $u=v$.

\indent Suppose, therefore, that $s_x=\mu u.c_x$ and $t_x=\mu v.c'_x$. We then have $$[[c^{\tau_1}_x]]=[[c_x]]^{\tau_1}=[[s_x]]_{\tilde{x}}^{\kappa}=[[t_x]]_{\overline{x}}=[[c'_x]]^{\tau_2}=[[c'^{\tau_2}_x]],$$ and, consequently, that $\underline{\Phi}(c^{\tau_1}_x)=\underline{\Phi}(c'^{\tau_2}_x)$, where ${\tau_1}$ renames $u$ to $\overline{x}$ and $\tau_2$ renames $v$ to $\overline{x}$. By the induction hypothesis we now have $c^{\tau_1}_x='c'^{\tau_2}_x$ and, consequently, we get that $$
\begin{array}{rllllll}
\underline{a}\{\sigma\}&='& c_x[\mu x.\underline{a}\{\sigma[x/x]\}/u]
&=& c^{\tau_1}_x[\mu x.\underline{a}\{\sigma[x/x]\}/\overline{x}]\\[0.1cm]
&='&c'^{\tau_2}_x[\mu x.\underline{a}\{\sigma[x/x]\}/\overline{x}]
&=& c'_x[\mu x.\underline{a}\{\sigma[x/x]\}/v]
&='& \underline{a}\{\sigma'\}.  
\end{array}$$
\indent Suppose now that $c_1$ and $c_2$ do not have the same head symbol, i.e. that  $c_1=\underline{a}\{s_x|x\in X\}=\underline{a}\{\sigma_1\}$ and $c_2=\underline{b}\{t_y|y\in Y\}=\underline{b}\{\sigma_2\}$, and let $\underline{\Phi}(c_1)=[{\EuScript T}_{c_1}]_{\alpha}$ and $\underline{\Phi}(c_2)=[{\EuScript T}_{c_2}]_{\alpha}$. Let ${\EuScript T}$ be a representative of $[{\EuScript T}_{c_1}]_{\alpha}=[{\EuScript T}_{c_2}]_{\alpha}$. Observe that two groups of renamings  feature in the transitions from $c_1$ and $c_2$ to ${\EuScript T}$: the first one contains the renamings specified by the definitions  of the simultaneous compositions $\underline{\Phi}(c_1)$ and $\underline{\Phi}(c_2)$, and the second one contains the renamings given by the $\alpha$-equivalence of ${\EuScript T}_{c_1}$ and  ${\EuScript T}$, and ${\EuScript T}_{c_2}$ and ${\EuScript T}$. However,  by {\small{\tt{(MU4)}}}, all the  renamings of parameters and variables of $c_1$ and $c_2$ made in defining  ${\EuScript T}$, can be also performed on $c_1$ and $c_2$ themselves, leading to commands $c'_1=_{\mu}c_1$ and  $c'_2=_{\mu}c_2$, such that $\underline{\Phi}(c'_1)=\underline{\Phi}(c'_2)=[{\cal T}]_{\alpha}$ and such that ${\EuScript T}$   shares the same sets of parameters and variables with both $c'_1$ and $c'_2$. Hence, we can assume that  ${\EuScript T}$ already  shares the same sets of parameters and variables with $c_1$ and $c_2$. This, in particular, means that $a,b\in {\it Cor}({\EuScript T})$.

  Let $x\in X$   be such that $b\in {\it Cor}({\EuScript P}({\EuScript T},a,x))$.  By the construction of ${\EuScript T}$, the parameter $\underline{b}$ appears in $\sigma_1(x)=\mu u.c$.
We define the {\em distance between} $\underline{a}$ {\em and} $\underline{b}$ {\em in} $c_1$ as the natural number $d_{c_1}(a,b)$ determined as follows.\\[-0.5cm] \begin{itemize}
\item If $\underline{b}$ is the head symbol of $c$,  then $d_{c_1}(a,b)=1$.\\[-0.5cm]
\item If $\underline{h}$ is the head symbol of $c$, $h\neq b$, then $d_{c_1}(a,b)=d_c(h,b)+1$.
\end{itemize}
We prove that $c_1='c_2$ by induction on  $d_{c_1}(a,b)$. If $d_{c_1}(a,b)=1$, then, for some $y\in FV(b)$,  we have that $\sigma_1(x)=\mu y.\underline{b}\{\sigma_2[y/y]\}$. Therefore,\vspace{-0.1cm} 
$$\begin{array}{rcl}
\underline{a}\{\sigma_1\}&='&\underline{b}\{\sigma_2[y/y]\}[\mu x.\underline{a}\{\sigma_1[x/x]\}/y]\\[0.1cm]
&=&\underline{b}\{\sigma_2[\mu x.\underline{a}\{\sigma_1[x/x]\}/y]\}\\[0.1cm]
&='&\underline{b}\{\sigma_2\}.
\end{array}$$

\indent If $d_{c_1}(a,b)\geq 2$, then, since $d_{c_1}(a,h)=1$ (where $h$ is as above), we have that $c_1='\linebreak c[\mu x.\underline{a}\{\sigma_1[x/x]\}/u]$. On the other hand, by the induction hypothesis for $d_c(h,b)<n$, we have that $c_2='c[\mu x.\underline{a}\{\sigma_1[x/x]\}/u]$, and
 the conclusion follows by the transitivity of $='$. The iterative application of the equality $='$,  implicit in the induction argument, which reduces the distance between $\underline{a}$ and $\underline{b}$,   can be illustrated as follows
\begin{center}
\begin{tabular}{c @{\hspace{-0.9ex}} c @{\hspace{-0.9ex}} c @{\hspace{-0.9ex}} c}
\begin{tikzpicture}
 \node (f) [circle,fill=none,draw=black,line width=0.6mm,minimum size=4mm,inner sep=0.1mm]  at (-1.15,0) {\small $a$};
\node (g) [circle,fill=none,draw=black,minimum size=4mm,inner sep=0.1mm]  at (2,1.7) {\small $b$};
\node (h) [circle,fill=none,draw=black,minimum size=4mm,inner sep=0.1mm]  at (-0.2,0.2) {\small $h$};
\node (h1) [circle,fill=none,draw=black,minimum size=4mm,inner sep=0.1mm]  at (0.7,0.5) {\small $c$};
\node (d1) [circle,fill=none,draw=none,minimum size=4mm,inner sep=0.1mm]  at (1.3,0.95) { $\cdot$};
\node (d2) [circle,fill=none,draw=none,minimum size=4mm,inner sep=0.1mm]  at (1.4,1.025) { $\cdot$};
\node (d3) [circle,fill=none,draw=none,minimum size=4mm,inner sep=0.1mm]  at (1.5,1.1) { $\cdot$};
\draw (f)--(h);
\draw (h)--(h1);
\draw (h1)--(1.1,0.8);
\draw (g)--(1.7,1.3);
\end{tikzpicture} \ & \begin{tikzpicture}
 \node (f) [circle,fill=none,draw=black,minimum size=4mm,inner sep=0.1mm]  at (-1.15,0) {\small $a$};
 \node (fd) [circle,fill=none,draw=none,minimum size=4mm,inner sep=0mm]  at (-1.45,0.85) { $='$};
\node (g) [circle,fill=none,draw=black,minimum size=4mm,inner sep=0.1mm]  at (2,1.7) {\small $b$};
\node (h) [circle,fill=none,draw=black,line width=0.6mm,minimum size=4mm,inner sep=0.1mm]  at (-0.2,0.2) {\small $h$};
\node (h1) [circle,fill=none,draw=black,minimum size=4mm,inner sep=0.1mm]  at (0.7,0.5) {\small $c$};
\node (d1) [circle,fill=none,draw=none,minimum size=4mm,inner sep=0.1mm]  at (1.3,0.95) { $\cdot$};
\node (d2) [circle,fill=none,draw=none,minimum size=4mm,inner sep=0.1mm]  at (1.4,1.025) { $\cdot$};
\node (d3) [circle,fill=none,draw=none,minimum size=4mm,inner sep=0.1mm]  at (1.5,1.1) { $\cdot$};
\draw (f)--(h);
\draw (h)--(h1);
\draw (h1)--(1.1,0.8);
\draw (g)--(1.7,1.3);
\end{tikzpicture}  &  \begin{tikzpicture}
 \node (f) [circle,fill=none,draw=black,minimum size=4mm,inner sep=0.1mm]  at (-1.15,0) {\small $a$};
 \node (fd) [circle,fill=none,draw=none,minimum size=4mm,inner sep=0mm]  at (-1.45,0.85) { $='$};
\node (g) [circle,fill=none,draw=black,minimum size=4mm,inner sep=0.1mm]  at (2,1.7) {\small $b$};
\node (h) [circle,fill=none,draw=black,minimum size=4mm,inner sep=0.1mm]  at (-0.2,0.2) {\small $h$};
\node (h1) [circle,fill=none,draw=black,line width=0.6mm,minimum size=4mm,inner sep=0.1mm]  at (0.7,0.5) {\small $c$};
\node (d1) [circle,fill=none,draw=none,minimum size=4mm,inner sep=0.1mm]  at (1.3,0.95) { $\cdot$};
\node (d2) [circle,fill=none,draw=none,minimum size=4mm,inner sep=0.1mm]  at (1.4,1.025) { $\cdot$};
\node (d3) [circle,fill=none,draw=none,minimum size=4mm,inner sep=0.1mm]  at (1.5,1.1) { $\cdot$};
\draw (f)--(h);
\draw (h)--(h1);
\draw (h1)--(1.1,0.8);
\draw (g)--(1.7,1.3);
\end{tikzpicture}   &  \begin{tikzpicture}
 \node (f) [circle,fill=none,draw=black,minimum size=4mm,inner sep=0.1mm]  at (-1.15,0) {\small $a$};
 \node (fd) [circle,fill=none,draw=none,minimum size=4mm,inner sep=0mm]  at (-1.45,0.85) { $='$};
\node (g) [circle,fill=none,draw=black,line width=0.6mm,minimum size=4mm,inner sep=0.1mm]  at (2,1.7) {\small $b$};
\node (h) [circle,fill=none,draw=black,minimum size=4mm,inner sep=0.1mm]  at (-0.2,0.2) {\small $h$};
\node (h1) [circle,fill=none,draw=black,minimum size=4mm,inner sep=0.1mm]  at (0.7,0.5) {\small $c$};
\node (d1) [circle,fill=none,draw=none,minimum size=4mm,inner sep=0.1mm]  at (1.3,0.95) { $\cdot$};
\node (d2) [circle,fill=none,draw=none,minimum size=4mm,inner sep=0.1mm]  at (1.4,1.025) { $\cdot$};
\node (d3) [circle,fill=none,draw=none,minimum size=4mm,inner sep=0.1mm]  at (1.5,1.1) { $\cdot$};
\draw (f)--(h);
\draw (h)--(h1);
\draw (h1)--(1.1,0.8);
\draw (g)--(1.7,1.3);
\end{tikzpicture}
\end{tabular}
\end{center}
This completes the proof of Theorem \ref{equiv}.\\[0.15cm]
\indent  Note that we have in fact    {\em two} bijections: $\mu${\texttt{Comm}}$_{{\underline{\EuScript C}}}(X)/_{=_{\mu}}\enspace{\simeq}\enspace\mu$\texttt{Comm}$_{{\underline{\EuScript C}}}^{\it nf}(X)/_{='}\enspace{\simeq}\enspace ${\texttt{T}}$_{{\underline{\EuScript C}}}(X)$,
the first one being induced via normal forms of $\leadsto$:   we have that ${\it nf}(c_1) =' {\it nf}(c_2)$ implies $c_1=_{\mu}c_2$, and conversely, if $c_1=_{\mu}c_2$, then $\Phi({\it nf}(c_1)=\Phi({\it nf}(c_2))$ implies ${\it nf}(c_1) =' {\it nf}(c_2)$.

\section{The equivalence established}

We finally show how the $\mu$-syntax, together with the syntactic formalism of unrooted trees suited to it, allows us to prove   Theorem \ref{th1} in a genuinely constructive, and, thereby, algorithmic way. In both directions, the proof we give elaborates calculations to be made at each step of the transition, relying on the constructions made in the proof of Theorem \ref{equiv}. Let ${\underline{\EuScript C}}:{\bf Bij}^{\it op}\rightarrow {\bf Set}$ be a functor.\\[0.1cm]
\indent Suppose that $({\underline{\EuScript C}},\delta)$ is an ${\EuScript M}$-algebra.  We build a  cyclic operad, as described by Definition \ref{d2},   as follows.

We  distinguish the identities, by setting ${\it id}_{x,y}=\delta_{\{x,y\}}([\{(x,y);{\it id}_{\{x,y\}}\}]_{\alpha})$. 
The definition of the partial composition operation $_x\circ_y$ is derived  by considering restrictions of  $\delta$   to unrooted trees with two corollas:\vspace{-0.1cm}
\begin{center}
\begin{tikzpicture}
\node (f) [circle, inner sep=0.1mm, minimum size=4mm,fill=none,draw=black]  at (-1,0) {$a$};
\node (g) [circle, inner sep=0.1mm, minimum size=4mm,fill=none,draw=black]  at (1,1) {$b$};
\node (1) [label={[xshift=0cm, yshift=-0.4cm]\scriptsize{$x_1$}},circle,inner sep=0mm, minimum size=1mm,fill=none,draw=none,inner sep=1mm]  at (-0.85,0.95) {};
\node (2) [label={[xshift=-0.04cm, yshift=-0.17cm]\scriptsize{$x_2$}},circle,inner sep=0mm,   minimum size=1mm,fill=none,draw=none]  at (-1.56,0.75) {};
\node (3) [label={[xshift=-0.1cm, yshift=-0.26cm]\scriptsize{$x_3$}},circle,inner sep=0mm,  minimum size=1mm,fill=none,draw=none]  at (-1.9,0.1) {};
\node (4) [label={[xshift=-0.05cm, yshift=-0.39
cm]\scriptsize{$x_4$}},circle, inner sep=0mm,  minimum size=1mm,fill=none,draw=none]  at (-1.65,-0.6) {};
\node (5) [label={[xshift=0.0cm, yshift=-0.4cm]\scriptsize{$x_5$}},circle, inner sep=0mm, minimum size=1mm,fill=none,draw=none]  at (-0.9,-0.85) {};
\node (6) [label={[xshift=0.15cm, yshift=-0.32cm]\scriptsize{$x_6$}},circle,inner sep=0mm, minimum size=1mm,fill=none,draw=none]  at (-0.27,-0.45) {};
\node (a) [label={[xshift=0.0cm, yshift=-0.37cm]\scriptsize{$y_4$}},circle,  inner sep=0mm, minimum size=1mm,fill=none,draw=none,inner sep=1mm]  at (0.2,1.7) {};
\node (b) [label={[xshift=0.0cm, yshift=-0.2cm]\scriptsize{$y_3$}},circle,inner sep=0mm,   minimum size=1mm,fill=none,draw=none]  at (1.35,1.85) {};
\node (c) [label={[xshift=0.12cm, yshift=-0.33cm]\scriptsize{$y_1$}},circle,inner sep=0mm,  minimum size=1mm,fill=none,draw=none]  at (1.9,0.88) {};
\node (d) [label={[xshift=0.03cm, yshift=-0.34
cm]\scriptsize{$y_2$}},circle,inner sep=0mm,   minimum size=1mm,fill=none,draw=none]  at (1.1,0.12) {};
\node (x) [label={[xshift=-0.15cm, yshift=-0.5cm]\scriptsize{$x$}},circle,inner sep=0mm ,minimum size=1mm,fill=none,draw=none]  at (0,0.5) {};
\node (y) [label={[xshift=0.27cm, yshift=-0.35cm]\scriptsize{$y$}},circle, inner sep=0mm,  minimum size=1mm,fill=none,draw=none]  at (0,0.5) {};
\node (alpha) [circle, inner sep=0mm, minimum size=2cm,fill=none,draw=none]  at (3.5,0.58) {$\overset{\alpha}{\longmapsto}$};
\node (com) [circle, inner sep=0mm, minimum size=2cm,fill=none,draw=none]  at (5,0.55) {$a \,{_{x}\circ_y }\,b$};
\draw (-0.06,0.6)--(0.06,0.4);
\draw (f)--(g);
\draw (f)--(1);
\draw (f)--(2) ;\draw (f)--(3);\draw (f)--(4);\draw (f)--(5);\draw (f)--(6);
\draw (g)--(a);\draw (g)--(b);\draw (g)--(c);\draw (g)--(d);
\end{tikzpicture}
\vspace{-0.1cm}
 \end{center} 

Formally, for $a\in {\underline{\EuScript C}}(X)$ and $b\in{\underline{\EuScript C}}(Y)$  different then units, the partial composition operation \vspace{-0.1cm}$${{_{x}\circ_{y}}}:{\underline{\EuScript C}}(X)\times {\underline{\EuScript C}}(Y)\rightarrow {\underline{\EuScript C}}(X\backslash\{x\}\cup Y\backslash\{y\})\vspace{-0.1cm}$$ is characterised via $\delta_{X\backslash\{x\}\cup Y\backslash\{y\}}:{\EuScript M}({\underline{\EuScript C}})(X\backslash\{x\}\cup Y\backslash\{y\})\rightarrow {\underline{\EuScript C}}(X\backslash\{x\}\cup Y\backslash\{y\})$ as \vspace{-0.1cm}$$a\, {{_{x}\circ_{y}}} \, b=\delta_{X\backslash\{x\}\cup Y\backslash\{y\}}([\{a(x,\dots);{\it id}_X\}]_{\alpha}\, {{_{x}\bullet_{y}}} \, [\{b(y,\dots);{\it id}_Y\}]_{\alpha}),\vspace{-0.1cm}$$
where ${{_{x}\bullet_{y}}}$ is the operation on (classes of) unrooted trees defined in \S \ref{free-forgetiful}.  If, say, $b={\it id}_{y,z}$, we set   $$a\, {{_{x}\circ_{y}}} \, {\it id}_{\{y,z\}}=\delta_{X\backslash\{x\}\cup \{z\}}([\{a(x,\dots);{\it id}_X\}]_{\alpha}\, {{_{x}\bullet_{y}}} \, [\{(y,z);{\it id}_{\{y,z\}}\}]_{\alpha}).$$

As a structure morphism of ${\EuScript M}$-algebra $({\underline{\EuScript C}},\delta)$, $\delta$ satisfies the coherence conditions given by commutations of the following two diagrams:
\vspace{-0.1cm}
\begin{center}
\begin{tikzpicture}[scale=1.5]
\node (A)  at (0,1) {${\EuScript M}{\EuScript M}({\underline{\EuScript C}})$};
\node (B) at (2.1,1) {${\EuScript M}({\underline{\EuScript C}})$};
\node (C) at (0,-0.2) {${\EuScript M}({\underline{\EuScript C}})$};
\node (D) at (2.1,-0.2) {${\underline{\EuScript C}}$};
\path[->,font=\footnotesize]
(A) edge node[above]{${\EuScript M}\delta$} (B)
(A) edge node[left]{$\mu_{{\underline{\EuScript C}}}$} (C)
(B) edge node[right]{$\delta$} (D)
(C) edge node[above]{$\delta$} (D);
\end{tikzpicture}
\enspace\enspace\enspace\enspace
\begin{tikzpicture}
\node (A)  at (0,0.8) {${\underline{\EuScript C}}$};
\node (B) at (2.5,0.8) {${\EuScript M}({\underline{\EuScript C}})$};
\node (C) at (1.25,-0.5) {${\underline{\EuScript C}}$};
\node (D) at (1.25,-1.1) {};
\path[->,font=\footnotesize]
(A) edge node[above]{$\eta_{{\underline{\EuScript C}}}$} (B)
(A) edge node[left]{$id_{{\underline{\EuScript C}}}$} (C)
(B) edge node[right]{$\delta$} (C);
\end{tikzpicture}
\end{center}
\vspace{-0.1cm}
called the multiplication and the unit law for $\delta$, which allows us to verify the axioms from  Definition \ref{entriesonly}  as follows.\\
\indent  For the proof of  {\small \texttt{(A1)}}, let $a$ and $b$ be as above, let $c\in {\underline{\EuScript C}}(Z)$, $z\in Z$ and $u\in Y$.  Suppose   that  $a$, $b$ and $c$ are all different from identity and that $X$, $Y$ and $Z$ are mutually disjoint (only to avoid the renaming technicalities).  We will chase the multiplication diagram  above two times,  starting with   two-level unrooted trees $${\cal T}_1=\{[\{a(x,\dots),b(y,u,\dots);\sigma_1'\}]_{\alpha},[\{c(z,\dots);{\it id}_Z\}]_{\alpha};\sigma_1\}$$  and $${\cal T}_2=\{[\{a(x,\dots);{\it id}_X\}]_{\alpha}, [\{ b(y,u,\dots),c(z,\dots);\sigma_2'\}]_{\alpha};\sigma_2\},$$ where $\sigma_1'=(x\,\,y)$, $\sigma_1=(u\,\,z)$, $\sigma_2'=(u\,\,z)$ and $\sigma_2=(x\,\,y)$.  If we start with   ${\cal T}_1$, then, by chasing the diagram to the right-down, the action of ${\EuScript M}\delta$ corresponds to the action of $\delta$ on $[\{a(x,\dots),b(y,u,\dots);\sigma_1'\}]_{\alpha}$ and $[\{c(z,\dots);{\it id}_Z\}]_{\alpha}$ separately. Followed by the action of $\delta$ again, we get the following sequence   
$$
{\cal T}_1 \overset{{\EuScript M}\delta}{\longmapsto}\{(a\, {_{x}\circ_{y}}\,\, b)(u,\dots),c(z,\dots);\sigma\}\overset{\delta}{\longmapsto} (a\, {_{x}\circ_{y}}\,\, b)\,\,{_{u}\circ_z}\, c .$$
In the other direction, the action of the monad multiplication flattens ${\cal T}_1$, the resulting tree already being in normal form.  Followed by the action of $\delta$, we obtain the  sequence: 
$${\cal T}_1  \overset{\mu_{\underline{\EuScript C}}}{\longmapsto} \{a(x,\dots),b(y,u,\dots),c(z,\dots);\underline{\sigma}\} \overset{\delta}{\longmapsto} \delta(\{a(x,\dots),b(y,u,\dots),c(z,\dots);\underline{\sigma}\}).
$$
Hence, $$(a\, {_{x}\circ_{y}}\,\, b)\,\,{_{u}\circ_z}\, c=\delta(\{a(x,\dots),b(y,u,\dots),c(z,\dots);\underline{\sigma}\}).$$ The diagram chasing with respect to ${\cal T}_2$ gives us  that $$a\, {_{x}\circ_{y}}\, (b\,\,{_{u}\circ_z}\, c)= \delta(\{a(x,\dots),b(y,u,\dots),c(z,\dots);\underline{\sigma}\}).$$  Therefore,  $(a\, {_{x}\circ_{y}}\,\, b)\,\,{_{u}\circ_z}\, c=a\, {_{x}\circ_{y}}\, (b\,\,{_{u}\circ_z}\, c)$.\\[0.1cm]
\indent The axiom {\small{\texttt{(CO)}}} follows directly by the commutativity of $_x\bullet_y$. \\[0.1cm]
\indent The axiom {\small{\texttt{(EQ)}}} holds by the equivariance of ${_{x}\bullet_{y}}$ and the naturality of $\eta$ and $\delta$. For $\sigma_1$, $\sigma_2$ and $\sigma$   as in {\small{\tt{(EQ)}}}, and denoting $Z=X'\backslash\{\sigma^{-1}_1(x)\}\cup  Y'\backslash\{\sigma^{-1}_2(y)\}$, we have     $$\begin{array}{rcl}
a^{\sigma_1}\,\,{_{{ {\sigma_1^{-1}}(x)}}\circ_{\sigma_2^{-1}(y)}}\,\, b^{\sigma_2}&=&\delta_{Z}({\eta_{\underline{\EuScript C}}}_{X'}(a^{\sigma_1})\,\,{_{{ {\sigma_1^{-1}}(x)}}\bullet_{\sigma_2^{-1}(y)}}\,\, {\eta_{\underline{\EuScript C}}}_{Y'}(b^{\sigma_2}))\\[0.1cm]
&=&\delta_Z(\eta_X(a)^{\sigma_1}\,\,{_{{ {\sigma_1^{-1}}(x)}}\bullet_{\sigma_2^{-1}(y)}}\,\, \eta_Y(b)^{\sigma_2})\\[0.1cm]
&=&\delta_Z((\eta_X(a)\, {_{x}\bullet_{y}}\, \eta_Y(b))^{\sigma}) \\[0.1cm]
&=&\delta_Z(\eta_X(a)\, {_{x}\bullet_{y}}\, \eta_Y(b))^{\sigma}\\[0.1cm]
&=&(a\, {_{x}\circ_{y}}\, b)^{\sigma}.
\end{array}\vspace{-0.15cm}$$

\indent The unit axioms {\small{\texttt{(U1)}}} and {\small{\texttt{(U3)}}} are verified by the corresponding  axioms for $_x\bullet_y$, the commuting triangle and  the naturality of $\sigma$ and $\eta$: for the bijection $\kappa$ that renames $x$ to $z$, and denoting $X'=X\backslash\{x\}\cup \{z\}$, we have\vspace{-0.1cm} 
$$\begin{array}{rcl}
a\, {_{x}\circ_{y}}\, {\it id}_{y,z}&=&\delta_{X'}([\{a(x,\dots);{\it id}_X\}]_{\alpha} \, {_{x}\bullet_{y}}\,  [\{(y,z);{\it id}_{\{y,z\}}\}]_{\alpha} )\\[0.1cm]
&=&\delta_{X'}({\eta_{\underline{\EuScript C}}}_X(a)^{\kappa})\\[0.1cm]
&=&\delta_{X'}({\eta_{\underline{\EuScript C}}}_{X'}(a^{\kappa}))\\[0.1cm]
&=&a^{\kappa},\vspace{-0.1cm}
\end{array}$$
and, for   $\sigma:\{u,v\}\rightarrow \{x,y\}$, we have 
$$\begin{array}{rcl}
{\it id}_{x,y}^{\sigma}&=&\delta_{\{x,y\}}([\{(x,y);{\it id}_{\{x,y\}}\}]_{\alpha})^{\sigma}\\[0.1cm]
&=&\delta_{\{u,v\}}([\{(x,y);{\it id}_{\{x,y\}}\}]^{\sigma}_{\alpha})\\[0.1cm]
&=&\delta_{\{u,v\}}([\{(u,v);{\it id}_{\{u,v\}}\}]_{\alpha})\\[0.1cm]
&=&{\it id}_{u,v}.
\end{array}$$

\indent In the other direction, we define $\delta:{\EuScript M}({\underline{\EuScript C}})\rightarrow{{\underline{\EuScript C}}}$ as the map induced by the interpretation of the $\mu$-syntax in the cyclic operad $\underline{\EuScript C}$, i.e. by the   composition  
of $[[\rule{.4em}{.4pt}]]:\mu$\texttt{Exp}$_{{\underline{\EuScript C}}}\rightarrow$ \texttt{cTerm}$_{{\underline{\EuScript C}}}$ and $[\rule{.4em}{.4pt}]_{{{\EuScript C}}}:\mbox{\texttt{cTerm}}_{{\underline{\EuScript C}}}\rightarrow {{\underline{\EuScript C}}}$. Therefore, with  $\Phi$ being defined as in the proof of Theorem \ref{equiv}, we set
$$\delta({\EuScript T})= [\,[[c]]\,]_{{\underline{\EuScript C}}},\enspace\enspace\mbox{where}\; c\;\mbox{is any command of $\mu$\texttt{Exp}$_{{\underline{\EuScript C}}}$ such that}\;\underline{\Phi}(c)=[{\EuScript T}]_{\alpha}.$$
Note that  this definition is valid by Theorem \ref{equiv}.
We verify that $\delta$   satisfies the equations of an ${\EuScript M}$-algebra  on simple examples. The general case follows naturally. 
Let $${\cal T}=\{[\{a(x_1,\dots,x_n),b(y_1,\dots,y_m);\sigma_1\}]_{\alpha},[\{d(z_1,\dots,z_p);{\it id}_Z\}]_{\alpha};\sigma \}$$ be a two-level unrooted tree such that $\sigma_1(x_i)=y_j$,  and $\sigma(y_k)=z_l$,   and suppose, say,  that $$\underline{\Phi}(\underline{a}\{t_1,\dots,t_n\})=[\{a(x_1,\dots,x_n),b(y_1,\dots,y_m);\sigma_1\}]_{\alpha}$$ and $$\underline{\Phi}(\underline{d}\{s_1,\dots,s_p\})=[\{d(z_1,\dots,z_p);{\it id}_Z\}]_{\alpha}.$$ 
\indent By chasing the multiplication diagram to the right-down, the action of ${\EuScript M}\delta$ provides the interpretations of the commands that correspond to each of the corollas of ${\cal T}$. Thus, setting $[[\underline{a}\{t_1,\dots,t_n\}]]=a(\varphi)$ and $[[\underline{d}\{s_1,\dots,s_p\}]]=d(\tau)$,  we get that  $${\EuScript M}\delta([{\cal T}]_{\alpha})=\{[a(\varphi)]_{\underline{\EuScript C}}(x_1,\ldots,x_{i-1},x_{i+1},\dots,x_n,y_1,\dots,y_{j-1},y_{j+1},\dots,y_m),[d(\tau)]_{\underline{\EuScript C}}(z_1,\dots,z_p);\sigma\}.$$  If now
  $$\underline{\Phi}(\underline{[a(\varphi)]_{\underline{\EuScript C}}}\{k_1,\dots,k_{n+m-2}\})={\EuScript M}\delta([{\cal T}]_{\alpha}),$$ then, by setting
  $[[\underline{[a(\varphi)]_{\underline{\EuScript C}}}\{k_1,\dots,k_{n+m-2}\}]]=[a(\varphi)]_{\underline{\EuScript C}}(\psi)$, we get
  $$\delta({\EuScript M}\delta([{\cal T}]_{\alpha}))={[a(\varphi)}(\psi)]_{\underline{\EuScript C}}.$$ 
\indent By chasing the multiplicaiton diagram to the down-left, we first get  $$\mu_{{\underline{\EuScript C}}}([{\cal T}]_{\alpha})=\{a(x_1,\dots,x_n),b(y_1,\dots,y_m),d(z_1,\dots,z_p);\underline{\sigma}\}\, $$
We shall construct a  command $c$, such that $\underline{\Phi}(c)=\mu_{\underline{\EuScript C}}({\cal T})$, in the way guided by the choices we made in chasing the diagram to the right-down. More precisely, in that direction, $a$ was the corolla of $\{a(x_1,\dots,x_n),b(y_1,\dots,y_m);\sigma_1\}$ chosen in constructing the corresponding command, and $d$ was the one for $\{d(z_1,\dots,z_p);{\it id}_Z\}$, and then, in the next step, $[a(\varphi)]_{\underline{\EuScript C}}$ was the  chosen corolla of  ${\EuScript M}\delta([{\cal T}]_{\alpha})$. Therefore, we set   $c=\underline{a}\{\sigma\}$, where $$\sigma(x_i)=\mu y_j.\underline{b}\{y_1,\dots,y_{k-1},\mu z_l.\underline{d}\{z_1,\dots,z_p\},y_{k+1},\dots ,y_m\}.$$  Thus, setting $[[\underline{a}\{\sigma\}]]=a(\xi)$, we get $$\delta(\mu_{{\underline{\EuScript C}}}({\cal T}))=[a(\xi)]_{\underline{\EuScript C}}$$ as a result of chasing the diagram to the down-left. The equality ${a(\varphi)}(\psi)=a(\xi)$  follows   by Lemma \ref{geneq}.(b). \\[0.1cm]
\indent As for the unit diagram, if $a\in\underline{\EuScript C}(X)$, where $X=\{x_1,\dots,x_n\}$, then ${\eta_{\underline{\EuScript C}}}_X(a)=\linebreak\{a(x_1,\dots,x_n);  {\it id}_X\}$, and, since $[\{a(x_1,\dots,x_n);{\it id}_X\}]_{\alpha}=\underline{\Phi}(\underline{a}\{x_1,\dots,x_n\})$, we have that $$\delta_X({\eta_{\underline{\EuScript C}}}_X(f))=[\,[[\underline{a}\{x_1,\dots,x_n\}]]\,]_{\underline{\EuScript C}}=a.\vspace{-0.1cm}$$
This completes the proof.

\end{document}